\DeclareFontFamily{U}{shuffle}{}
\DeclareFontShape{U}{shuffle}{m}{n}{ <-8>shuffle7 <8->shuffle10}{}
\newcommand{\tn}{{\tilde{n}}}
\newcommand{\dc}{{\rm dc}}
\newcommand{\s}{{\rm sn}}
\newcommand{\nc}{{\rm nc}}
\newcommand{\ds}{{\rm ds}}
\newcommand{\gf}{{\varphi}}
\newcommand\Res{{\rm Res}}
\newcommand\gs{{\sigma}}
\def\int{\displaystyle\!int}
\def\lim{\displaystyle\!lim}
\def\sum{\displaystyle\!sum}
\def\sup{\displaystyle\!sup}
\def\inf{\displaystyle\!inf}
\def\cap{\displaystyle\!cap}
\def\max{\displaystyle\!max}
\def\min{\displaystyle\!min}
\def\frac{\displaystyle\!frac}
\let\oldsection\section
\renewcommand\section{\setcounter{equation}{0}\oldsection}
\def\N{\mathbb{N}}
\def\Z{\mathbb{Z}}
\def\Q{\mathbb{Q}}
\def\cn{{\rm cn}}
\def\dn{{\rm dn}}
\def\su{\sum\limits_{n=1}^\infty}
\theoremstyle{plain}
\newtheorem{thm}{Theorem}[section]
\newtheorem{lem}[thm]{Lemma}
\newtheorem{cor}[thm]{Corollary}
\newtheorem{con}[thm]{Conjecture}
\newtheorem{pro}[thm]{Proposition}
\theoremstyle{definition}
\newtheorem{re}[thm]{Remark}
\newtheorem{exa}[thm]{Example}
\begin{document}
\title{\bf Berndt-Type Integrals and Series Associated with Ramanujan and Jacobi Elliptic Functions}
\author{
{Ce Xu${}^{a,}$\thanks{Email: cexu2020@ahnu.edu.cn}\quad and \quad Jianqiang Zhao${}^{b,}$\thanks{Email: zhaoj@ihes.fr}}\\[1mm]
\small a. School of Mathematics and Statistics, Anhui Normal University,\\ \small  Wuhu 241002, P.R. China\\
\small b. Department of Mathematics, The Bishop's School, La Jolla, CA 92037, USA
}

\date{}
\maketitle

\noindent{\bf Abstract.} In this paper, we evaluate in closed forms two families of infinite integrals containing hyperbolic and trigonometric functions in their integrands. We call them Berndt-type integrals since he initiated the study of similar integrals. We first establish explicit evaluations of four classes of hyperbolic sums by special values of the Gamma function, by two completely different approaches, which extend those sums considered by Ramanujan and Zucker previously. We discover the first by refining two results of Ramanujan concerning some $q$-series. For the second we compare both the Fourier series expansions and the Maclaurin series expansions of a few Jacobi elliptic functions. Next, by contour integrations we convert two families of Berndt-type integrals to the above hyperbolic sums, all of which can be evaluated in closed forms. We then discover explicit formulas for one of the two families. Throughout the paper we present many examples which enable us to formulate a conjectural explicit formula for the other family of the Berndt-type integrals at the end.

\medskip
\noindent{\bf Keywords}: Berndt-type integral, $q$-series, hyperbolic and trigonometric functions, contour integration, Jacobi elliptic functions.

\medskip
\noindent{\bf AMS Subject Classifications (2020):} 05A30, 32A27, 42A16, 33E05, 11B68.


\section{Introduction}
The primary motivation for this paper arises from several infinite integrals first considered by Berndt in \cite{Berndt2016} in which he proved some explicit relations between hyperbolic series and integrals containing trigonometric and hyperbolic functions. For example, from \cite[Thm. 3.1 and Eq. (5.5)]{Berndt2016} for nonnegative integer $a$, Berndt showed that
\begin{align}\label{equ.one}
(1+(-1)^a)\int_0^\infty \frac{x^{2a+1}dx}{\cos x+\cosh x}=\frac{\pi^{2a+2}i^a}{2^a} \sum_{n=0}^\infty \frac{(-1)^n (2n+1)^{2a+1}}{\cosh((2n+1)\pi/2)},
\end{align}
and
\begin{align}\label{equ.-.one}
(1+i^{a+1})\int_0^\infty \frac{x^{a}dx}{\cos x-\cosh x}=2i(1+i)^{a-1}\pi^{a+1}\sum_{n=1}^\infty \frac{(-1)^{n+1}n^a}{\sinh(n\pi)}\quad (a\geq 2).
\end{align}
In his second notebook \cite{R2012} (see \cite[p.134, Entry 16(i)-(vi)]{B1991}), for odd integers $s<12$ Ramanujan evaluated the
hyperbolic sums
\begin{align}\label{equ.Ramanujan-iden-s}
\sum_{n=0}^\infty \frac{(-1)^n (2n+1)^{s}}{\cosh((2n+1)y/2)}
\end{align}
in terms of $x$ and $z$. For example, it is proved that
\begin{align}\label{equ.Ramanujan-iden}
\sum_{n=0}^\infty \frac{(-1)^n (2n+1)^3}{\cosh((2n+1)y/2)}=\frac1{4}z^4(1-2x)\sqrt{x(1-x)},
\end{align}
where the definition of $x,y$ and $z$ are given in \eqref{rel-den}. Zucker \cite{Z1979} extended these evaluations by replacing $\cosh$ with $\sinh$ in \eqref{equ.Ramanujan-iden-s}.

In this paper, we apply some carefully chosen contour integrations related to hyperbolic and trigonometric functions to build on the previous work on Berndt-type integrals and hyperbolic infinite sums (see \cite{Berndt2016,Campbell,Rama1916,Ya-2018,Z1979}).
First, we will prove the following general results, which extend those of Yakubovich (see \cite[Cor.~3]{Ya-2018}).

\begin{thm} \label{thm-main1}
Let $\Gamma(x)$ be the usual gamma function. For any positive integer $m$,
\begin{align*}
\sum_{n=1}^\infty \frac{n^{4m-2}}{\sinh^2(n\pi)}  &\,\in \frac{\Gamma^{8m}(1/4)}{\pi^{6m}}\Q+\frac{\delta_{m,1}}{\pi^2}\Q, \\ \sum_{n=1}^\infty \frac{n^{4m-4}}{\sinh^2(n\pi)}  &\, \in  \frac{\Gamma^{8m-8}(1/4)}{\pi^{6m-5}} \Q+\frac{\Gamma^{8m-4}(1/4)}{\pi^{6m-3}}\Q+\delta_{m,1}\Q,  \\
\sum_{n=1}^\infty \frac{n^{4m-1}\cosh(n\pi)}{\sinh^3(n\pi)} &\,\in
\frac{\Gamma^{8m}(1/4)}{\pi^{6m+1}}\Q+\frac{\delta_{m,1}}{\pi^3}\Q, \\
\sum_{n=1}^\infty \frac{n^{4m-3}\cosh(n\pi)}{\sinh^3(n\pi)}  &\,\in \frac{\Gamma^{8m}(1/4)}{\pi^{6m}}\Q
+\frac{\Gamma^{8m-8}(1/4)}{\pi^{6m-4}}\Q,
\end{align*}
where $\delta_{a,b}$ is the Kronecker symbol.
\end{thm}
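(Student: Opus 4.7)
The plan is to reduce each of the four sums to a specialization, at $y=\pi$ (equivalently $q=e^{-\pi}$), of a Ramanujan-type identity expressing a Lambert $q$-series as a polynomial in the elliptic variables $x$ and $z$. At $y=\pi$ one sits at the lemniscatic point where $x=\tfrac12$ and
\[
z\;=\;\frac{2K(1/\sqrt 2)}{\pi}\;=\;\frac{\Gamma(1/4)^2}{2\pi^{3/2}},
\]
so that $z^{4m}\in \frac{\Gamma(1/4)^{8m}}{\pi^{6m}}\,\Q$ and $z^{4m-2}\in \frac{\Gamma(1/4)^{8m-4}}{\pi^{6m-3}}\,\Q$. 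The shape of the theorem then follows once each target sum is expressed as a $\Q[1/\pi]$-linear combination of such powers of $z$, plus at most one exceptional constant piece in the case $m=1$.

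First I would rewrite each summand in Lambert form using $q=e^{-\pi}$:
\[
\frac{1}{\sinh^2(n\pi)}=\frac{4q^{2n}}{(1-q^{2n})^2},\qquad \frac{\cosh(n\pi)}{\sinh^3(n\pi)}=\frac{2q^{2n}(1+q^{2n})}{(1-q^{2n})^3}.
\]
This reduces each of the four sums to a $\Z$-linear combination of Eisenstein-like Lambert series $\sum_{n\ge 1}n^{s}q^{n}/(1-q^{n})^{k}$ at $q=e^{-\pi}$, and every such series is controlled by Ramanujan's Entry 16 (together with its $\sinh$ counterpart established by Zucker \cite{Z1979}) after differentiating a suitable number of times in $y$. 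Each differentiation brings down a factor of $2n+1$ or $n$ in the summand and keeps the right-hand side inside the ring generated by $x,z,\sqrt{x(1-x)}$ and the derivatives $dx/dy,\,dz/dy$, which by the classical Jacobi--Ramanujan relations are themselves polynomial in $x$ and $z$. The parities $4m-2,4m-4$ in the first two statements arise from an even number of differentiations of the $\sinh^{-1}$-type identity, while $4m-1,4m-3$ in the last two arise by differentiating its derivative, which already has $\sinh^{-2}$ in the denominator. Specializing at $x=\tfrac12$, where $1-2x$ vanishes and $\sqrt{x(1-x)}=\tfrac12$, kills many monomials so that only the predicted $z^{4m}$ and $z^{4m-2}$ can survive, and the extra powers of $1/\pi$ in the second and fourth statements come from the Jacobian $dq/dy=-q$ converting each $y$-derivative into a $q$-derivative.

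The main technical obstacle will be tracking the $\delta_{m,1}$ correction terms. The lowest-weight Lambert series in the hierarchy is an $E_2$-type object carrying the well-known quasi-modular anomaly responsible for $\zeta(2)=\pi^2/6$ appearing outside the ring generated by powers of $z$. At $m=1$ this anomaly feeds into the first three sums an explicit constant of size $1/\pi^2$, $1$, and $1/\pi^3$ respectively, while for $m\ge 2$ the additional differentiations eliminate it; the fourth sum has no such anomaly since its lowest-weight contribution is already absorbed into the $\Gamma(1/4)^{8m-8}/\pi^{6m-4}$ term. Carefully propagating these constants through the derivative chain with the correct combinatorics and powers of $\pi$ is the delicate bookkeeping at the heart of the argument and is precisely the refinement of Ramanujan's two $q$-series identities advertised in the abstract.
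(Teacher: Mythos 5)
Your overall route --- convert each summand to a Lambert series in $q=e^{-\pi}$, invoke Ramanujan-type identities expressing these in $x$ and $z$, and specialize at the lemniscatic point $x=1/2$ where $z=\Gamma^2(1/4)/(2\pi^{3/2})$ --- is exactly the paper's strategy (aside from the slip $\cosh(n\pi)/\sinh^3(n\pi)=4q^{2n}(1+q^{2n})/(1-q^{2n})^3$, not $2q^{2n}(1+q^{2n})/(1-q^{2n})^3$). But the decisive step is missing. Ramanujan's theorem only gives $\sum_{n\ge1}n^{2k}q^{2n}/(1-q^{2n})^2=\Phi_{1,2k}\in\Q[P,Q,R]$, homogeneous of weight $2k+2$. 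At $x=1/2$ one has $R=0$, $P=3/\pi$ and $Q=3z^4/4$, so a generic weight-$(2k+2)$ polynomial evaluates to a linear combination of \emph{many} different quantities $z^{4b}\pi^{-a}$ with $a+2b=k+1$, i.e.\ many different powers of $\Gamma(1/4)$. The content of the theorem is precisely that only one such power survives (plus the quasimodular anomaly when $m=1$), and your assertion that the specialization ``kills many monomials so that only the predicted $z^{4m}$ and $z^{4m-2}$ can survive'' is the statement that has to be proved; it does not follow from anything you have set up.

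What the paper supplies, and what your plan needs, is a refinement of Ramanujan's result: writing $\sigma=x(1-x)$, one shows by induction on Ramanujan's recursions for $S_{2n+1}$ and $\Phi_{1,2n}$ (equations (22) and (28) of his 1916 paper), using $(\sigma')^2=1-4\sigma$ to stay in the ring, that $S_{4m-1}\in z^{4m}\Q[\sigma]$ and $S_{4m+1}\in z^{4m+2}\Q[\sigma]\sigma'$, and consequently that $\Phi_{1,4m+2}\in z^{4m+4}\Q[\sigma]+z^{4m+3}z'\Q[\sigma]\sigma'$ and $\Phi_{1,4m}\in z^{4m+2}\Q[\sigma]\sigma'+z^{4m+1}z'\Q[\sigma]$. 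Every unwanted monomial thus carries an explicit factor $\sigma'=1-2x$ and dies at $x=1/2$, while $z^{4m+1}z'$ reduces to $z^{4m}\cdot(2/\pi)$ via $zz'(1/2)=2/\pi$; this is what produces the single term $\Gamma^{8m}(1/4)/\pi^{6m}$ (resp.\ $\Gamma^{8m-8}(1/4)/\pi^{6m-5}$), and one further $y$-derivative using $dx/dy=-x(1-x)z^2$ gives the third and fourth statements. Two smaller points: your appeal to Entry 16 (the alternating $\cosh^{-1}$ sums) is not the right input for these $\sinh^{-2}$ sums over all $n$ --- the relevant identities are the Eisenstein evaluations of Entry 13 together with the recursions just described; and while the $\delta_{m,1}$ anomaly does come from $P$ as you say, showing it disappears for $m\ge2$ requires the same inductive bookkeeping, since $S_1=-P/24$ enters the recursion for $\Phi_{1,2n}$ at \emph{every} level through the products $S_1S_{2n-1}$, and one must verify that the surviving contribution is absorbed into the single allowed power of $z$ rather than producing a stray power of $1/\pi$.
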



\begin{thm} \label{thm-main2}
For any integer $m\ge 0$,
\begin{align*}
\sum_{n=0}^\infty \frac{(2n+1)^{4m+2}}{\cosh^2((2n+1)\pi/2)}  &\, \in \frac{\Gamma^{8m+8}}{\pi^{6m+6}}\Q ,\\
\sum_{n=0}^\infty \frac{(2n+1)^{4m}}{\cosh^2((2n+1)\pi/2)}    &\, \in \frac{\Gamma^{8m}}{\pi^{6m+1}}\Q, \\
\sum_{n=0}^\infty \frac{(2n+1)^{4m+1}\sinh((2n+1)\pi/2)}{\cosh^3((2n+1)\pi/2))}
    &\,  \in \frac{\Gamma^{8m+8}}{\pi^{6m+6}}\Q+\frac{\Gamma^{8m}}{\pi^{6m+2}}\Q, \\
\sum_{n=0}^\infty \frac{(2n+1)^{4m+3}\sinh((2n+1)\pi/2)}{\cosh^3((2n+1)\pi/2))} &\,  \in  \frac{\Gamma^{8m+8}}{\pi^{6m+7}}\Q.
\end{align*}
\end{thm}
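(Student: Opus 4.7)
The plan is to prove Theorem \ref{thm-main2} by combining Fourier series expansions of the Jacobi elliptic functions with Parseval's identity, specialized at the lemniscatic modulus $k=1/\sqrt{2}$. At this value of $k$ the nome equals $q=e^{-\pi}$ and each denominator $\cosh((2n+1)\pi/2)$ arises through the identity
\[
\frac{q^{n+1/2}}{1+q^{2n+1}}=\frac{1}{2\cosh((2n+1)\pi/2)},\qquad q=e^{-\pi},
\]
so the natural starting points are Fourier series whose cosine/sine arguments are $(2n+1)\pi u/(2K)$, namely those of $\cn$ and $\s$.

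For the two $1/\cosh^2$ sums I would begin with
\[
\cn(u,k)=\frac{2\pi}{kK}\sum_{n=0}^{\infty}\frac{q^{n+1/2}}{1+q^{2n+1}}\cos\frac{(2n+1)\pi u}{2K},
\]
differentiate $r$ times in $u$ to bring out the factor $((2n+1)\pi/(2K))^{r}$, and apply Parseval's identity on the full period $[0,4K]$. With $r=2m+1$ this rewrites $\sum_{n\ge 0}(2n+1)^{4m+2}/\cosh^2((2n+1)\pi/2)$ as a rational monomial in $k$, $\pi$, $K$ times $\int_0^{4K}(\cn^{(2m+1)}(u,k))^2\,du$, and with $r=2m$ it does the analogous job for $\sum_{n\ge 0}(2n+1)^{4m}/\cosh^2((2n+1)\pi/2)$. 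Now using $\s'=\cn\dn$, $\cn'=-\s\dn$, $\dn'=-k^2\s\cn$ together with $\cn^2=1-\s^2$ and $\dn^2=1-k^2\s^2$, each $(\cn^{(r)})^2$ reduces to a polynomial in $\s^2(u,k)$ over $\Q(k^2)$, and the classical reduction recurrence for $\int_0^K\s^{2r}(u,k)\,du$ then expresses the full integral as a $\Q(k^2)$-linear combination of the complete elliptic integrals $K$ and $E$.

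For the remaining two sums (those with $\sinh/\cosh^3$) I would exploit the $q$-derivative identity
\[
(2n+1)\,\frac{\sinh((2n+1)\pi/2)}{\cosh^3((2n+1)\pi/2)}=4\,q\frac{d}{dq}\!\left[\frac{q^{2n+1}}{(1+q^{2n+1})^2}\right]_{q=e^{-\pi}},
\]
so that applying $q\,d/dq$ to the closed forms obtained in the previous step yields $\sum(2n+1)^{4m+1}\sinh/\cosh^3$ from the $\sum(2n+1)^{4m}/\cosh^2$ formula, and $\sum(2n+1)^{4m+3}\sinh/\cosh^3$ from the $\sum(2n+1)^{4m+2}/\cosh^2$ formula. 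The operator $q\,d/dq$ acts on $K$, $E$, $k$ via the classical differential relations
\[
\frac{dK}{dk}=\frac{E-(1-k^2)K}{k(1-k^2)},\qquad \frac{dE}{dk}=\frac{E-K}{k},
\]
together with the theta-function formula for $q\,dk/dq$, keeping the output inside the ring generated over $\Q(k^2,1/k,1/(1-k^2))$ by $K$ and $E$.

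Finally, specializing to $k=1/\sqrt{2}$ and substituting
\[
K(1/\sqrt{2})=\frac{\Gamma^2(1/4)}{4\sqrt{\pi}},\qquad E(1/\sqrt{2})=\frac{\Gamma^2(1/4)}{8\sqrt{\pi}}+\frac{\pi^{3/2}}{\Gamma^2(1/4)}
\]
(the latter derived from the Legendre relation $2EK=K^2+\pi/2$ valid at the self-dual point) turns each expression into a $\Q$-combination of monomials in $\Gamma(1/4)$ and $\sqrt{\pi}^{\pm 1}$. The principal obstacle is the weight bookkeeping: the theorem asserts integer $\pi$-exponents and only one or two surviving $\Gamma(1/4)$-monomials per sum, so all the half-integer $\pi$ powers coming from $K$, $E$, and the Parseval prefactors must cancel exactly, and in most cases one of the two contributions $K$ or $E$ in $\int_0^{4K}(\cn^{(r)})^2\,du$ must drop out. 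I expect this to be enforced by a careful bidegree count in $(\Gamma(1/4),\sqrt{\pi})$ combined with the parity phenomenon $k'=k$ at the self-dual point, and this combinatorial tracking — different but parallel for each of the four sums — is the heart of the argument.
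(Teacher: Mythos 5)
Your route is genuinely different from the paper's: you propose Parseval's identity applied to the Fourier expansion of $\cn^{(r)}$ at general modulus, reduction of $\int_0^{4K}(\cn^{(r)})^2\,du$ to $\Q(k^2)K+\Q(k^2)E$ via the $\int \s^{2j}$ recurrence, the operator $q\,d/dq$ for the $\sinh/\cosh^3$ sums, and specialization at $k^2=1/2$. The paper instead compares the Fourier expansion of $\ds$ with the Maclaurin expansion of $u\,\ds(u)$ (whose coefficients $q_{2m}(x)$ lie in $\Q[x]$ by a derivative-structure lemma), obtains $\sum_{n\ge0}(2n+1)^{2m-1}/(e^{(2n+1)y}+1)$ in closed form, and produces the four sums by differentiating in $x$ via $dx/dy=-x(1-x)z^2$. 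Your outline is sound as far as it goes and correctly yields, after substituting $K(1/\sqrt2)=\Gamma^2(1/4)/(4\sqrt\pi)$ and the Legendre value of $E$, that each $1/\cosh^2$ sum lies in $\frac{\Gamma^{4r+4}}{\pi^{3r+3}}\Q+\frac{\Gamma^{4r}}{\pi^{3r+1}}\Q$.

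However, there is a genuine gap exactly at the point you defer to "careful bidegree counting": the theorem asserts that in three of the four cases only \emph{one} of the two admissible monomials survives (e.g.\ the $(2n+1)^{4m+2}$ sum has no $\Gamma^{8m+4}/\pi^{6m+4}$ component, and the $(2n+1)^{4m}$ sum has no $\Gamma^{8m+4}/\pi^{6m+3}$ component). No bidegree count in $(\Gamma(1/4),\sqrt\pi)$ can deliver this, because both monomials are of consistent weight; what is needed is the vanishing of a specific rational linear combination of the $K$- and $E$-coefficients of $\int_0^{4K}(\cn^{(r)})^2\,du$ at $k^2=1/2$ (for instance, for $r=1$ one finds $\int_0^{4K}(\cn')^2\,du=\tfrac{4}{3k^2}\big((1-2k^2)(K-E)+k^2K\big)$, and it is precisely the factor $(1-2k^2)$ that kills the $E$-term at the lemniscatic point). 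The paper's mechanism for these vanishings is Jacobi's imaginary transformation, $iu\,\ds(iu,\sqrt{1-k^2})=u\,\ds(u,k)$, which forces the functional equations $q_{4m-2}(1-x)+q_{4m-2}(x)=0$ and $q'_{4m}(1-x)+q'_{4m}(x)=0$, hence $q_{4m-2}(1/2)=q'_{4m}(1/2)=q''_{4m-2}(1/2)=0$. If you want to salvage your Parseval approach you must prove an analogous symmetry for your integrands under $k\mapsto k'$; note this is less direct for $\cn$, since $\cn(iu,k')=\nc(u,k)$ rather than $\cn(u,k)$, so the self-duality does not act on $\int(\cn^{(r)})^2$ in the obvious way. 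As written, the proposal establishes only the weaker two-monomial containments and leaves the refined claims unproved.
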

A more precise version of the above theorem is given by Theorem \ref{thm-main2Precise} in which
all the rational constants are given explicitly as a function of $m$ in closed forms.

In another direction, the Ramanujan's identity \eqref{equ.Ramanujan-iden} was applied by Berndt in \cite{Berndt2016} to prove the following beautiful identity:
\begin{align}\label{equ.Ramanujan-iden-inte}
\int_{-\infty}^\infty \frac{dx}{\cos\sqrt{x}+\cosh\sqrt{x}}=\frac{\pi}{4}\frac{\Gamma^2(1/4)}{\Gamma^2(3/4)}=\frac{\Gamma^4(1/4)}{2\pi}.
\end{align}
Some recent results on infinite series involving hyperbolic functions may be found in the works of \cite{BB2002,T2015,T2008,T2010,T2012,X2018,XuZhao-2022,Ya-2018} and the references therein.
Using Theorem \ref{thm-main1}, we will prove the following evaluations (see Theorems \ref{thm-main3Precise} and \ref{thm-main4Precise}).

\begin{thm}  \label{thm-main34}
For any positive integer $p$,
\begin{align}
&\int_0^\infty \frac{x^{4p+1}}{(\cos x-\cosh x)^2}dx=i_p\frac{\Gamma^{8p}(1/4)}{\pi^{2p}}+j_p\frac{\Gamma^{8p+8}(1/4)}{\pi^{2p+4}},\label{conj-for-a2}\\
&\int_0^\infty \frac{x^{4p+1}}{(\cos x+\cosh x)^2}dx=g_p\frac{\Gamma^{8p}(1/4)}{\pi^{2p}}+h_p\frac{\Gamma^{8p+8}(1/4)}{\pi^{2p+4}},\label{conj-for-a1}
\end{align}
where $g_p,h_p\in \Q$ are given explicitly in closed forms in Theorem \ref{thm-main4Precise},
and $i_p,j_p\in \Q$ can be explicitly computed for each fixed $p$.
\end{thm}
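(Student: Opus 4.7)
The plan is to evaluate each integral by contour integration, converting it into a $\Q[\pi]$-linear combination of hyperbolic series already handled in Theorems \ref{thm-main1}--\ref{thm-main2}. Set $F_\pm(z)=z^{4p+1}/(\cos z\pm\cosh z)^2$. The factorisations
\[
\cos z - \cosh z=2\sinh\tfrac{(1+i)z}{2}\sinh\tfrac{(i-1)z}{2},\qquad \cos z+\cosh z=2\cos\tfrac{(1+i)z}{2}\cos\tfrac{(1-i)z}{2}
\]
locate the double poles of $F_-$ at $z=k\pi(1\pm i)$, $k\in\Z\setminus\{0\}$ (the apparent order-$4$ pole at the origin is absorbed by $z^{4p+1}$ since $p\ge 1$), and those of $F_+$ at $z=(2k+1)\pi(1\pm i)/2$, $k\in\Z$.

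I then integrate $F_\pm$ around the positively-oriented quarter-circle contour formed by $[0,R]$, the arc $|z|=R$ in the first quadrant, and the segment $[iR,0]$, letting $R\to\infty$ along a sequence avoiding the poles on $\arg z=\pi/4$. Using $\cos(iy)=\cosh y$, $\cosh(iy)=\cos y$ and $i^{4p+2}=-1$, the contribution from the imaginary segment equals the contribution from the real segment, while a standard exponential-growth estimate on the denominator (bounded below by $c\,e^{2R\max(\cos\theta,\sin\theta)}$ away from the diagonal) kills the arc. The residue theorem therefore yields
\[
\int_0^\infty F_\pm(x)\,dx \;=\; \pi i\sum_{k}\Res_{z=z_k^\pm}F_\pm(z),
\]
with $z_k^-=k\pi(1+i)$ ($k\ge 1$) and $z_k^+=(2k+1)\pi(1+i)/2$ ($k\ge 0$). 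Each double-pole residue is computed from $\Res_{z_0}(f/g^2)=f'(z_0)/g'(z_0)^2-f(z_0)g''(z_0)/g'(z_0)^3$. After reducing $\sin,\cos,\sinh,\cosh$ at the complex points to real hyperbolic values at $k\pi$ or $(2k+1)\pi/2$, and collecting factors of $i$ and $1+i$, the integral of $F_-$ emerges as an explicit $\Q[\pi]$-combination of $\sum k^{4p}/\sinh^2(k\pi)$ and $\sum k^{4p+1}\cosh(k\pi)/\sinh^3(k\pi)$, while the integral of $F_+$ emerges as a $\Q[\pi]$-combination of $\sum(2k+1)^{4p+2}/\cosh^2((2k+1)\pi/2)$ and $\sum(2k+1)^{4p+3}\sinh((2k+1)\pi/2)/\cosh^3((2k+1)\pi/2)$.

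For the $F_+$ case, Theorem \ref{thm-main2} evaluates both series in closed form as rational combinations of $\Gamma^{8p+8}(1/4)/\pi^{6p+6}$ and $\Gamma^{8p}(1/4)/\pi^{6p+\bullet}$; after multiplying by the $\pi^{4p+1}$ prefactor from the residue calculation the answer collapses to $g_p\,\Gamma^{8p}(1/4)/\pi^{2p}+h_p\,\Gamma^{8p+8}(1/4)/\pi^{2p+4}$, and tracking the combinatorics yields the closed-form rationals $g_p,h_p$ of Theorem \ref{thm-main4Precise}. The main obstacle is the $F_-$ case: Theorem \ref{thm-main1} with $m=p+1$ a priori introduces an intermediate power $\Gamma^{8p+4}(1/4)$ through the series $\sum k^{4p}/\sinh^2(k\pi)$, so one must combine the two residue-series contributions in such a way that the $\Gamma^{8p+4}$ terms cancel exactly, leaving only $\Gamma^{8p}(1/4)/\pi^{2p}$ and $\Gamma^{8p+8}(1/4)/\pi^{2p+4}$. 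Verifying this cancellation uniformly in $p$ and extracting $i_p,j_p$ in closed form is the technical heart of the argument, and is the reason $i_p,j_p$ are only asserted to be explicitly computable for each fixed $p$ rather than given by a universal formula as in the $F_+$ case.
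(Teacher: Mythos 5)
Your overall architecture matches the paper's: a quarter-circle contour integration of $z^{4p+1}/(\cos z\pm\cosh z)^2$ reducing each integral to two hyperbolic sums (this is Theorems \ref{thm-Integ-series-1} and \ref{thm-Integ-series-2} and Corollaries \ref{cor-Int-cos-h-1}, \ref{cor-Int-cos-h-2} in the paper), followed by closed-form evaluation of those sums. Two problems, one minor and one substantive. The minor one: your exponents in the $F_+$ reduction are off. With numerator $z^{4p+1}$ the double-pole residues produce $\sum(2k+1)^{4p+1}\sinh((2k+1)\pi/2)/\cosh^3((2k+1)\pi/2)$ and $\sum(2k+1)^{4p}/\cosh^2((2k+1)\pi/2)$, not the exponents $4p+3$ and $4p+2$ you wrote; with your exponents Theorem \ref{thm-main2} would return only $\Gamma^{8p+8}(1/4)$ contributions and the $g_p\,\Gamma^{8p}(1/4)/\pi^{2p}$ term could never arise, contradicting the statement you are proving.

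The substantive gap is in the $F_-$ case. You correctly observe that Theorem \ref{thm-main1} (the version stated in the introduction) only places $\sum_n n^{4p}/\sinh^2(n\pi)$ in $\frac{\Gamma^{8p}(1/4)}{\pi^{6p+1}}\Q+\frac{\Gamma^{8p+4}(1/4)}{\pi^{6p+3}}\Q$, so an unwanted $\Gamma^{8p+4}$ power threatens. But your proposed fix --- that this term cancels against the other residue-series contribution --- cannot work: the companion sum $\sum_n n^{4p+1}\cosh(n\pi)/\sinh^3(n\pi)$ lies in $\frac{\Gamma^{8p+8}(1/4)}{\pi^{6p+6}}\Q+\frac{\Gamma^{8p}(1/4)}{\pi^{6p+2}}\Q$ and contains no $\Gamma^{8p+4}$ component whatsoever, so there is nothing for the offending term to cancel against in the linear combination of Corollary \ref{cor-Int-cos-h-2}. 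The only way \eqref{conj-for-a2} can hold is if the $\Gamma^{8p+4}$ coefficient of $\sum_n n^{4p}/\sinh^2(n\pi)$ is identically zero, and proving this is exactly the content of the refinement \eqref{equ:sh2even} in Theorem \ref{thm-main1Precise}: one needs Prop.~\ref{pro:RamanujanPhi1}, which shows $\Phi_{1,4m}\in z^{4m+2}\Q[\gs]\gs'+z^{4m+1}z'\Q[\gs]$ with $\gs=x(1-x)$, so that at $x=1/2$ the first component dies because $\gs'(1/2)=0$ and only the pure $z^{4m+1}z'$ piece survives, which evaluates to a rational multiple of $\Gamma^{8m}(1/4)/\pi^{6m+1}$ by \eqref{special-case-x}. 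This refinement of Ramanujan's $\Q[P,Q,R]$ structure theorem is the key idea your proposal is missing, and without it the $F_-$ half of the theorem does not follow.
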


A natural question is how to evaluate the following more general Berndt-type integrals?

\medskip\noindent
{\bf Question:} For integers $a\geq 0$ and $b\geq 1$, can we express the following integrals in closed forms:
\begin{align}\label{question-1}
\int_0^\infty \frac{x^{a}dx}{(\cos x+\cosh x)^b}=?
\end{align}
And if $a\geq 2b$
\begin{align}\label{question-2}
\int_0^\infty \frac{x^{a}dx}{(\cos x-\cosh x)^b}=?
\end{align}

\section{Brief review of complete  elliptic integral of the first kind}
To fix notation we now give a very brief review of the complete  elliptic integral of the first kind in this section. As usual, $(a)_n$ denotes the ascending Pochhammer symbol defined by
\begin{align}
(a)_n:=\frac {\Gamma(a+n)}{\Gamma(a)}=a(a+1)\cdots(a+n-1) \quad {\rm and}\quad (a)_0:=1,
\end{align}
where $a\in \mathbb{C}$ is any complex number and $n\in\N_0$ is a nonnegative integer. Let $\Gamma \left(z\right)$ be
the Gamma function as usual. If $\Re(z)>0$ then
\begin{equation*}
\Gamma(z) := \int_0^\infty e^{-t}t^{z-1} \, dt.
\end{equation*}

The Gaussian or ordinary hypergeometric function ${_2}F_1(a,b;c;x)$ is defined for $|x|<1$ by the power series
\begin{align}
{_2}F_1(a,b;c;x)=\sum\limits_{n=0}^\infty \frac{(a)_n(b)_n}{(c)_n} \frac {x^n}{n!}\quad (a,b,c\in\mathbb{C}).
\end{align}
The complete elliptic integral of the first kind is defined by (see \cite{WW1966})
\begin{align}
K:=K(x):=K(k^2):=\int\limits_{0}^{\pi/2}\frac {d\varphi}{\sqrt{1-k^2\sin^2\varphi}}=\frac {\pi}{2} {_2}F_{1}\left(\frac {1}{2},\frac {1}{2};1;k^2\right).
\end{align}
Here $x=k^2$ and $k\ (0<k<1)$ is the modulus of $K$. The complementary modulus $k'$ is defined by $k'=\sqrt {1-k^2}$.
Furthermore, we set as usual
\begin{align}
K':=K(k'^2)=\int\limits_{0}^{\pi/2}\frac {d\varphi}{\sqrt{1-k'^2\sin^2\varphi}}=\frac {\pi}{2} {_2}F_{1}\left(\frac {1}{2},\frac {1}{2};1;1-k^2\right).
\end{align}
In order to better state our main results, we shall henceforth adopt the notations of Ramanujan (see Berndt's book \cite{B1991}). Let
\begin{align}\label{rel-den}
x:=k^2,\quad y:=y(x):=\pi \frac {K'}{K}, \quad q:=q(x):=e^{-y},\quad z:=z(x):=\frac {2}{\pi}K,\quad z':=\frac{dz}{dx}.
\end{align}
Using the identity $(a)_{n+1}=a(a+1)_n$, it is easily shown that
\[\frac {d}{dx}{_2}F_1(a,b;c;x)=\frac{ab}{c}{_2}F_1(a+1,b+1;c+1;x)\]
and more generally,
\begin{align}
\frac {d^n}{dx^n}{_2}F_1(a,b;c;x)=\frac{(a)_n(b)_n}{(c)_n}{_2}F_1(a+n,b+n;c+n;x)\quad(n\in\N_0).\label{1.7}
\end{align}
Then,
\begin{align}
\frac {d^nz}{dx^n}=\frac{\left(1/2\right)^2_n}{n!}{_2}F_1\left(\frac{1}{2}+n,\frac{1}{2}+n;1+n;x\right).
\end{align}
Applying the identity (see  \cite[Theorem 3.5.4(i)]{A2000})
\begin{align}
{_2}F_1\left(a,b;\frac {a+b+1}{2};\frac 1{2}\right)=\frac{\Gamma\left(\frac{1}{2}\right)\Gamma\left(\frac{a+b+1}{2}\right)}{\Gamma\left(\frac{a+1}{2}\right)\Gamma\left(\frac{b+1}{2}\right)},
\end{align}
we can show by an elementary calculation that
\begin{align}\label{den-z-diff}
\frac {d^nz}{dx^n}\bigg|_{x=1/2}=\frac{(1/2)^2_n\sqrt{\pi}}{\Gamma^2\left(\frac{n}{2}+\frac {3}{4}\right)}.
\end{align}

\section{Refined versions of two results of Ramanujan's}
In \cite{Rama1916} Ramanujan studied the $q$-series
\begin{equation*}
\Phi_{a,m}:=\Phi_{a,m}(q^2):=\sum_{n\ge 1} \frac{n^m q^{2n}}{(1-q^{2n})^{a+1}}
\end{equation*}
for integers $a\ge 0$ and $m\ge 1$. We follow Ramanujan's notation by setting
\begin{equation*}
S_m:=\frac12 \zeta(-m)+\Phi_{0,m} =-\frac{B_{m+1}}{2(m+1)}  +\Phi_{0,m}  \quad\forall r\ge 1.
\end{equation*}
The three relevant Eisenstein series are defined for $|q|<1$ by
\begin{align*}
&P:=1-24\su \frac{nq^n}{1-q^{2n}},\quad
Q:=1+240\su \frac{n^3q^{2n}}{1-q^{2n}},\quad
R:=1-504\su \frac{n^5q^{2n}}{1-q^{2n}}.
\end{align*}
By Entry 13 in Chapter 17 of Ramanujan's third notebook \cite{B1991}, we have
\begin{align}
&P=(1-2x)z^2+6x(1-x)zz',\label{3.583}\\
&Q=z^4(1-x+x^2),\label{3.55}\\
&R=z^6(1+x)(1-x/2)(1-2x).\label{3.57}
\end{align}
Ramanujan proved that the $q$-series $\Phi_{a,m}\in\Q[P,Q,R].$
In order to prove Theorem \ref{thm-main1} we need to refine this result when $a=0$ and $a=1$.

\begin{pro} \label{pro:RamanujanS}
Let $\gs=x(1-x)$. Then $S_1\in z^2\Q[\gs]\gs'+z z'\Q[\gs]$. For all $m\ge 1$ we have
\begin{alignat}{3}
 &S_{4m-1}\in z^{4m}\Q[\gs],  \quad  \quad  & & S'_{4m-1} \in z^{4m}\Q[\gs]\gs'+z^{4m-1}z'\Q[\gs] ,  \label{equ:S}\\
 &S_{4m+1} \in z^{4m+2}\Q[\gs]\gs', \quad\quad  & & S'_{4m+1} \in z^{4m+2}\Q[\gs]+ z^{4m+1}z'\Q[\gs]\gs'. \label{equ:S'}
\end{alignat}
\end{pro}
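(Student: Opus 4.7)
The plan is to derive everything from Ramanujan's three substitution formulas \eqref{3.583}--\eqref{3.57} together with one elementary algebraic identity. After using $(1+x)(1-x/2) = (2+\sigma)/2$, these read
\begin{equation*}
P = \sigma' z^2 + 6\sigma\, zz',\qquad Q = z^4(1-\sigma),\qquad R = \tfrac{1}{2} z^6\sigma'(2+\sigma),
\end{equation*}
where $\sigma = x(1-x)$ and $\sigma' = 1-2x$. The decisive algebraic observation is that
\begin{equation*}
(\sigma')^2 = (1-2x)^2 = 1 - 4x(1-x) = 1 - 4\sigma \,\in\, \Q[\sigma],
\end{equation*}
so two factors of $\sigma'$ can always be absorbed into $\Q[\sigma]$. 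Using $\zeta(-1)/2 = -1/24$, $\zeta(-3)/2 = 1/240$, $\zeta(-5)/2 = -1/504$, one first checks the base identities $S_1 = -P/24$, $S_3 = Q/240$, $S_5 = -R/504$, which substitute directly into the claimed modules and settle the case $m=1$ of every assertion.

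For odd $s\ge 3$ I would next sharpen the quoted membership $\Phi_{0,s}\in\Q[P,Q,R]$ to $S_s\in\Q[Q,R]$. A short route is to note that $S_s = E_{s+1}(q^2)/c_s$ for an explicit nonzero rational $c_s = -2(s+1)/B_{s+1}$, where $E_{s+1}$ is the usual Eisenstein series; for $s+1\ge 4$ even this is a holomorphic modular form for $SL_2(\Z)$, and the subring of modular forms inside $\Q[P,Q,R]$ is precisely $\Q[Q,R]$, because the non-modular generator $P = E_2$ cannot occur. (Alternatively, one can proceed by induction on $s$ using Ramanujan's differential equations $12\,\theta P = P^2 - Q$, $3\,\theta Q = PQ - R$, $2\,\theta R = PR - Q^2$.) Now sort by weight: a monomial $Q^a R^b$ has $z$-weight $4a+6b$, so in $S_{4m-1}$ (weight $4m$) the exponent $b$ must be even, $b = 2c$, and in $S_{4m+1}$ (weight $4m+2$) it must be odd, $b = 2c+1$. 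Because
\begin{equation*}
R^2 = \tfrac{1}{4} z^{12}(1-4\sigma)(2+\sigma)^2 \,\in\, z^{12}\Q[\sigma]
\end{equation*}
by the key identity, and $Q^a\in z^{4a}\Q[\sigma]$, every $Q^a R^{2c}$ lies in $z^{4m}\Q[\sigma]$, giving $S_{4m-1}\in z^{4m}\Q[\sigma]$; an extra factor of $R$ contributes $z^6\sigma'(2+\sigma)/2$, placing $S_{4m+1}\in z^{4m+2}\Q[\sigma]\sigma'$.

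Finally, the derivative statements follow by the chain rule applied to the explicit forms just obtained. Writing $S_{4m-1} = z^{4m}f(\sigma)$ with $f\in\Q[\sigma]$,
\begin{equation*}
S'_{4m-1} = 4m\,z^{4m-1}z' f(\sigma) + z^{4m}f'(\sigma)\sigma' \,\in\, z^{4m-1}z'\Q[\sigma] + z^{4m}\Q[\sigma]\sigma';
\end{equation*}
while writing $S_{4m+1} = z^{4m+2} g(\sigma)\sigma'$ with $g\in\Q[\sigma]$,
\begin{equation*}
S'_{4m+1} = (4m+2) z^{4m+1}z' g(\sigma)\sigma' + z^{4m+2}g'(\sigma)(\sigma')^2 - 2\,z^{4m+2}g(\sigma),
\end{equation*}
and the middle term lies in $z^{4m+2}\Q[\sigma]$ by $(\sigma')^2 = 1-4\sigma\in\Q[\sigma]$, giving the required membership. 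The main obstacle will be supplying a clean, self-contained proof of the refinement $S_s\in\Q[Q,R]$ for odd $s\ge 3$, since the excerpt only cites the weaker $\Q[P,Q,R]$-membership; invoking the modular-form structure is the most concise route, though a Ramanujan-style induction via the differential equations for $P,Q,R$ is more elementary and stays within the framework of the paper.
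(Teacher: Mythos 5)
Your proposal is correct, and it reaches the conclusion by a genuinely different route than the paper. The paper verifies the cases $S_1,S_3,S_5$ directly (exactly as you do) and then runs an induction on Ramanujan's recurrence
\begin{equation*}
\frac{(2n+5)(n-1)}{12(2n+1)(n+1)}\, S_{2n+3}=\sum_{k=1}^{n-1}\binom{2n}{2k} S_{2k+1}S_{2n-2k+1},
\end{equation*}
splitting into $n$ odd and $n$ even and sorting the indices on the right modulo $4$; the identity $(\sigma')^2=1-4\sigma$ is invoked there to absorb the product of two $\sigma'$ factors, just as in your argument. You instead first establish the refinement $S_s\in\Q[Q,R]$ for odd $s\ge 3$ via the modular-forms structure theorem, and then deduce everything from a weight-parity count on the monomials $Q^aR^b$ together with $R^2\in z^{12}\Q[\sigma]$. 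Both proofs are sound and both ultimately rest on the same key identity $(\sigma')^2=1-4\sigma$; your version is conceptually cleaner (no induction beyond the structure theorem, and it makes transparent \emph{why} the answer alternates between $z^{4m}\Q[\sigma]$ and $z^{4m+2}\Q[\sigma]\sigma'$ — it is the parity of the exponent of $R$), but it imports the classification of modular forms for $SL_2(\Z)$, which the paper deliberately avoids. Note that the ``main obstacle'' you flag at the end — a self-contained proof that $S_s\in\Q[Q,R]$ for odd $s\ge3$ — is resolved exactly by the displayed recurrence: its right-hand side runs over $1\le k\le n-1$, so only $S_3,\dots,S_{2n-1}$ occur and $P=-24S_1$ never enters; with the base cases $S_3=Q/240$ and $S_5=-R/504$ this gives the refinement by a one-line induction, which is why the paper's route needs no modularity input. (Your parenthetical suggestion of inducting directly on the differential equations $12\,\theta P=P^2-Q$, etc., is less straightforward, since those equations reintroduce $P$; the recurrence above is the correct elementary substitute.) Your derivative computations, including the use of $\sigma''=-2$, match the paper's claims exactly.
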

\begin{proof}
First note that $S_1= -P/24=-z^2\gs'/24-zz'\gs/4 \in z^2\Q[\gs]\gs'+z z'\Q[\gs]$.
Furthermore,
\begin{alignat*}{4}
S_3=&\, Q/240=z^4(1-\gs)/240, \quad & S_3'=&\, z^3z'(1-\gs)/60-z^4\gs'/240, \\
S_5=&\, -R/504=z^6(2+\gs)\gs'/1008,\quad & S_5'=&\, z^5z'(2+\gs)\gs'/168-z^6(1+2\gs)/336.
\end{alignat*}
Hence the claim is true for $m=1$. By \cite[(22)]{Rama1916},
\begin{equation} \label{equ:Srecur}
\frac{(2n+5)(n-1)}{12(2n+1)(n+1)} S_{2n+3}=\sum_{k=1}^{n-1}  \binom{2n}{2k} S_{2k+1} S_{2n-2k+1}.
\end{equation}
Suppose $n=2m-1$ is odd, then the sum of the two indices on the right-hand side of the above is $4m$.
Thus one index must have the form $4m_1+1$ and the other $4m_2-1$ with $m_1+m_2=m$.
Therefore $S_{4m+1} \in z^{4m_1+4m_2+2}\Q[\gs]\gs'=z^{4m+2}\Q[\gs]\gs'$ by induction assumption.

If $n=2m-2\ge 2$ is even, then the sum of the two indices on the right-hand side of the above is $4m-2$.
Thus either (i) $2k+1=4m_1-1>1$ and $2n-2k+1=4m_2-1>1$ with $m_1+m_2=m$, or
(ii) $2k+1=4m_1+1>1$ and $2n-2k+1=4m_2+1>1$ with $m_1+m_2=m-1$. If (i) holds then $S_{4m-1}\in z^{4m_1+4m_2}\Q[\gs]=z^{4m}\Q[\gs]$.
If (ii) holds then $S_{4m-1}\in z^{4m_1+2+4m_2+2}\Q[\gs](\gs')^2=z^{4m}\Q[\gs]$ since $(\gs')^2=1-4\gs$.

This completes the proof of the claims for $S_{\text{odd}}$ by induction.
The claims on the derivatives follow immediately.
\end{proof}

\begin{pro} \label{pro:RamanujanPhi1}
Let $\gs=x(1-x)$. Then we have
\begin{align}\label{equ:Phi12}
\Phi_{1,2} \in &\, z^4\Q[\gs]+z^3z'\Q[\gs]\gs'+z^2(z')^2\Q[\gs] ,\\
\Phi'_{1,2}\in &\, z^4\Q[\gs]\gs'+z^3z'\Q[\gs]+z^3z''\Q[\gs]\gs'+z^2(z')^2\Q[\gs]\gs'+z^2z'z''\Q[\gs]+z(z')^3\Q[\gs],\label{equ:Phi'12}
\end{align}
and for all $m\ge 1$
\begin{align}
 \Phi_{1,4m+2} &\, \in z^{4m+4}\Q[\gs]+z^{4m+3}z'\Q[\gs]\gs',   \quad
 \Phi_{1,4m}\in z^{4m+2}\Q[\gs]\gs'+z^{4m+1}z'\Q[\gs],  \label{equ:Phi}\\
 \Phi'_{1,4m+2}&\, \in  z^{4m+4}\Q[\gs]\gs'+z^{4m+3}z'\Q[\gs]+z^{4m+3}z''\Q[\gs]\gs'+z^{4m+2}(z')^2\Q[\gs]\gs', \label{equ:Phi'4m+2} \\
 \Phi'_{1,4m} &\, \in z^{4m+2}\Q[\gs]+z^{4m+1}z'\Q[\gs]\gs'+z^{4m+1}z''\Q[\gs]+z^{4m}(z')^2\Q[\gs]. \label{equ:Phi'4m}
\end{align}
\end{pro}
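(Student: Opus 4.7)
My plan hinges on the operator identity
\[
q\frac{d}{dq} \;=\; z^2\gs\,\frac{d}{dx},
\]
which follows from $q=e^{-y}$ combined with the classical Jacobi nome formula $dx/dy = -z^2\gs$. Since a direct calculation gives $q\,d/dq\,[q^{2n}/(1-q^{2n})] = 2n\,q^{2n}/(1-q^{2n})^2$, applying the operator to $\Phi_{0,m}$ yields the key reduction
\[
\Phi_{1,m+1} \;=\; \tfrac12\,q\frac{d}{dq}\Phi_{0,m} \;=\; \tfrac12\,z^2\gs\,S_m',
\]
valid for all $m\ge 1$; the constant shift $\Phi_{0,m}-S_m = B_{m+1}/(2(m+1))$ disappears upon differentiation in $x$.

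With this reduction in hand, the inclusions \eqref{equ:Phi} follow by substituting the descriptions of $S'_{4m-1}$ and $S'_{4m+1}$ supplied by Proposition~\ref{pro:RamanujanS} and absorbing the extra factor $\gs\in\Q[\gs]$: for example, $S'_{4m+1}\in z^{4m+2}\Q[\gs]+z^{4m+1}z'\Q[\gs]\gs'$ produces $\Phi_{1,4m+2}\in z^{4m+4}\Q[\gs]+z^{4m+3}z'\Q[\gs]\gs'$ after multiplication by $\tfrac12 z^2\gs$, and the treatment of $\Phi_{1,4m}$ is strictly parallel. The base case $\Phi_{1,2}$ must be handled separately, since Proposition~\ref{pro:RamanujanS} only describes $S_1$ itself. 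Starting from $S_1=-P/24 = -(\gs'z^2 + 6\gs zz')/24$ I will compute $z^2\gs S_1'$ directly and observe that a spurious term proportional to $\gs^2 z^3 z''$ appears. To eliminate it I will invoke the hypergeometric ODE
\[
\gs\,z'' + \gs'\,z' - \tfrac14 z \;=\; 0
\]
satisfied by $z={}_2F_1(1/2,1/2;1;x)$, which permits the substitution $\gs z'' = \tfrac14 z - \gs' z'$; after gathering like terms, the result lands in $z^4\Q[\gs]+z^3z'\Q[\gs]\gs'+z^2(z')^2\Q[\gs]$, establishing \eqref{equ:Phi12}.

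For the derivative inclusions \eqref{equ:Phi'12}--\eqref{equ:Phi'4m} I will simply differentiate each already-established formula for $\Phi_{1,m}$ termwise, exploiting $\gs''=-2$ and $(\gs')^2=1-4\gs$ to keep every coefficient inside $\Q[\gs]$. Because the declared target subspaces for $\Phi'_{1,m}$ already accommodate $z''$ (and, for $m=2$, also $z'z''$ and $(z')^3$), no further appeal to the hypergeometric ODE is needed; the Leibniz rule alone distributes each monomial into the correct summand. The main obstacle I anticipate is routine but extensive bookkeeping: tracking how multiplication by $z'$, $z''$, $\gs'$, and $(\gs')^2$ in the second-derivative objects $S''_{4m\pm 1}$ (obtained by differentiating the first-derivative formulas of Proposition~\ref{pro:RamanujanS}) redistribute monomials across the declared spaces, and verifying that no stray term escapes. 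A secondary subtlety is the asymmetry between $\Phi_{1,2}$ and $\Phi'_{1,2}$: the former avoids $z''$ only because of the ODE-driven collapse above, whereas the latter genuinely requires $z''$, $z'z''$, and $(z')^3$ terms, reflecting that differentiation systematically pushes monomials one step up in the natural filtration by derivatives of $z$.
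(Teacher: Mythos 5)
Your proposal is correct, but it reaches the general inclusions \eqref{equ:Phi} by a genuinely different route than the paper. The paper's proof handles $\Phi_{1,2}$ via Ramanujan's Table II identity $288\Phi_{1,2}=Q-P^2$ together with \eqref{3.583}--\eqref{3.55}, treats $\Phi_{1,4}$ and $\Phi_{1,6}$ as separate base cases from the same table, and then runs an induction on Ramanujan's recursion \eqref{equ:PhiInd}, sorting the products $S_{2k-1}S_{2n-2k+1}$ according to the residues of the indices mod $4$. You instead use the single differential identity $\Phi_{1,m+1}=\tfrac12\,q\frac{d}{dq}\Phi_{0,m}=\tfrac12 z^2\gs S_m'$, which is correct (I checked $q\frac{d}{dq}\bigl[q^{2n}/(1-q^{2n})\bigr]=2nq^{2n}/(1-q^{2n})^2$ and $q\frac{d}{dq}=-\frac{d}{dy}=z^2\gs\frac{d}{dx}$), and which reads \eqref{equ:Phi} off immediately from the derivative statements in Proposition~\ref{pro:RamanujanS} with no further induction and no recursion. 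Your treatment of the base case is also sound: $\tfrac12 z^2\gs S_1'$ does produce a term $-\tfrac18\gs^2z^3z''$, and the hypergeometric ODE $\gs z''+\gs'z'=\tfrac14 z$ for $z={}_2F_1(1/2,1/2;1;x)$ eliminates it; the resulting expression $\Phi_{1,2}=\tfrac{\gs}{96}z^4-\tfrac{\gs\gs'}{24}z^3z'-\tfrac{\gs^2}{8}z^2(z')^2$ agrees with the paper's $\tfrac{1}{288}(Q-P^2)$. The derivative inclusions are then obtained in both arguments by termwise differentiation using $\gs''=-2$ and $(\gs')^2=1-4\gs$ (your aside about ``second-derivative objects $S''_{4m\pm1}$'' is a red herring --- you never need them). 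What your route buys is brevity and uniformity: one operator identity replaces the recursion, the index bookkeeping, and two of the three base cases; the cost is an appeal to the hypergeometric differential equation, which the paper's purely algebraic argument avoids.
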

\begin{proof} First we note that from \cite[Table II]{Rama1916}
\begin{equation*}
288\Phi_{1,2}= Q-P^2 = 3z^4 \gs-12 z^3z'\gs\gs'-36 z^2(z')^2 \gs^2 \in z^4\Q[\gs]+z^3z'\Q[\gs]\gs'+z^2(z')^2\Q[\gs].
\end{equation*}
which yields \eqref{equ:Phi12}. Differentiating with respect to $x$ and using the fact that
$\gs''=-2$ and $(\gs')^2=1-4\gs$ we obtain \eqref{equ:Phi'12} immediately.

When $m=1$, by \cite[Table II]{Rama1916}, we see that
\begin{align*}
1440\Phi_{1,4}=&\,2(PQ-R)=-3z^6 \gs\gs'+12z^5z'(1-\gs)\gs,\\
2016 \Phi_{1,6}=&\, 2(Q^2-PR)=z^8 (4\gs^2+3\gs)-z^7z'(2-\gs)\gs\gs'.
\end{align*}
Thus the proposition holds for $m=1$.

By \cite[(28)]{Rama1916}, for all $n\ge 1$
\begin{equation}\label{equ:PhiInd}
\frac{2n+3}{2(2n+1)}S_{2n+1}-\Phi_{1,2n}=\sum_{k=1}^{n}  \binom{2n}{2k-1} S_{2k-1} S_{2n-2k+1}.
\end{equation}
Suppose $n=2m+1\ge 3$. Then by Prop.~\ref{pro:RamanujanS} we see that $S_{2n+1}=S_{4m+3}\in z^{4m+4}\Q[\gs]$
while
\begin{equation*}
S_1S_{2n-1}=S_1S_{4m+1}\in (z^2\Q[\gs]\gs'+z z'\Q[\gs])z^{4m+2}\Q[\gs]\gs'\subset z^{4m+4}\Q[\gs]\gs'+z^{4m+3}z'\Q[\gs]
\end{equation*}
since $(\gs')^2=1-4\gs$. The other products on the right-hand side of \eqref{equ:PhiInd}
are either of the form $z^{4m_1+4m_2+4}\Q[\gs](\gs')^2$ (when $k$ is odd, $m_1+m_2=m$)
or of the form $^{4m_1+4m_2+8}\Q[\gs]$ (when $k$ is even, $m_1+m_2=m-1$). Thus
$\Phi_{1,4m+2} \in z^{4m+4}\Q[\gs]\gs'+z^{4m+3}z'\Q[\gs]$.

If $n=2m$, then $S_{4m+1} \in z^{4m+2}\Q[\gs]\gs'$ by Prop.~\ref{pro:RamanujanS} and
\begin{equation*}
S_1S_{2n-1}=S_1S_{4m-1}\in (z^2\Q[\gs]\gs'+z z'\Q[\gs])z^{4m}\Q[\gs]\subset z^{4m+2}\Q[\gs]\gs'+z^{4m+1}z'\Q[\gs].
\end{equation*}
The other products must have the form of $z^{4m_1+4m_1+2}\Q[\gs]\gs'$ for some $m_1+m_2=m$. Thus
$\Phi_{1,4m}\in   z^{4m+2}\Q[\gs]\gs'+z^{4m+1}z'\Q[\gs]$.
This completes the proof of \eqref{equ:Phi} of the proposition by induction.

Finally, \eqref{equ:Phi'4m+2} and  \eqref{equ:Phi'4m} follow easily from \eqref{equ:Phi}.
\end{proof}

\section{Relations between Berndt-type Integrals and Series}
In this section, we will establish some explicit relations between the Berndt-type integrals \eqref{question-1}, \eqref{question-2} with $b=2$ and some hyperbolic summations by using the contour integrations. To save space, throughout this section we will put
\begin{equation*}
\tn=\frac{2n-1}2.
\end{equation*}

\begin{lem}\label{lem-tri-hy}
 \emph{(cf. \cite{X2018})} Let $n$ be an integer. Then we have
\begin{align}
&\frac{\pi}{\sinh(\pi s)} \buildrel{s \to ni}\over{=} (-1)^n \left(\frac{1}{s-ni}
+ 2\sum_{k = 1}^\infty  (-1)^k\bar\zeta(2k)(s - ni)^{2k-1} \right),\label{e5}
\\
& \frac{\pi}{\cosh(\pi s)}\buildrel{s \to \tn i}\over{=} (-1)^n i\left(\frac{1}{s-\tn i}
+ 2\sum_{k = 1}^\infty  (-1)^k\bar\zeta(2k)(s - \tn i)^{2k-1} \right),\label{e9}
\end{align}
where ${\bar \zeta}(s)$ denotes the alternating Riemann zeta function defined by
\begin{align*}
\bar \zeta \left( s \right) := \sum\limits_{n = 1}^\infty  {\frac{{{{\left( { - 1} \right)}^{n - 1}}}}{{{n^s}}}}\quad ( \Re(s) \ge 1).
\end{align*}
\end{lem}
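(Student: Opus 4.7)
The plan is to derive both Laurent expansions from the single classical Mittag--Leffler expansion
\[
\pi\csc(\pi z)=\frac{1}{z}+2\sum_{k=1}^\infty \bar\zeta(2k)\,z^{2k-1},
\]
which one obtains by expanding each summand of the partial fraction decomposition
$\pi\csc(\pi z)=1/z+2z\sum_{k\ge 1}(-1)^k/(z^2-k^2)$ as a geometric series in $z^2/k^2$ and recognizing the alternating zeta values $\bar\zeta(2k)=\sum_{k\ge 1}(-1)^{k-1}/k^{2k}$. Convert this into an expansion of $\pi/\sinh(\pi w)$ at $w=0$ via the identity $\sinh(\pi w)=-i\sin(i\pi w)$, so that $\pi/\sinh(\pi w)=i\,\pi\csc(i\pi w)$; substituting $z=iw$ and using $i^{2k-1}=i(-1)^{k-1}$ produces the sign $(-1)^k$ and gives
\[
\frac{\pi}{\sinh(\pi w)}=\frac{1}{w}+2\sum_{k=1}^\infty (-1)^k\bar\zeta(2k)\,w^{2k-1}.
\]

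Next, for \eqref{e5} set $w=s-ni$ and apply the addition formula for $\sinh$ together with $\cosh(n\pi i)=(-1)^n$ and $\sinh(n\pi i)=0$ to obtain $\sinh(\pi s)=(-1)^n\sinh(\pi w)$. Inserting this into the displayed expansion of $\pi/\sinh(\pi w)$ yields \eqref{e5} at once.

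For \eqref{e9} set $w=s-\tn i$. Using the addition formula for $\cosh$ together with $\cos(\tn\pi)=0$ and $\sin(\tn\pi)=(-1)^{n-1}$, one finds $\cosh(\pi s)=i(-1)^{n-1}\sinh(\pi w)$, hence
\[
\frac{\pi}{\cosh(\pi s)}=(-1)^n i\cdot\frac{\pi}{\sinh(\pi w)},
\]
and applying the expansion of $\pi/\sinh(\pi w)$ already obtained gives \eqref{e9}.

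The only real obstacle is bookkeeping of signs and powers of $i$: one must track the factor $(-1)^n$ coming from the shift of the pole index, the factor $(-1)^k$ produced by $i^{2k-1}$ when substituting $z=iw$ into the cosecant expansion, and the extra factor of $i$ introduced when converting $\cosh$ at a half-integer shift into $\sinh$. No deeper analytic input is required; the result is purely a Laurent expansion exercise.
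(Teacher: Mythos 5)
Your proof is correct. The paper itself offers no argument for this lemma, merely citing \cite{X2018}, so there is nothing to compare against; your derivation --- starting from the partial-fraction expansion $\pi\csc(\pi z)=1/z+2z\sum_{n\ge1}(-1)^n/(z^2-n^2)$, resumming to get $\pi\csc(\pi z)=1/z+2\sum_{k\ge1}\bar\zeta(2k)z^{2k-1}$ for $|z|<1$, rotating via $z=iw$ to obtain the $\sinh$ expansion, and then shifting by $ni$ (resp.\ $\tn i$) using the addition formulas --- is the standard route and all the sign and $i$-power bookkeeping checks out, including the step $\cosh(\pi s)=i(-1)^{n-1}\sinh(\pi w)$ and its reciprocal $(-1)^n i$. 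The only blemish is a harmless index clash in your display of $\bar\zeta(2k)$, which should read $\sum_{n\ge1}(-1)^{n-1}/n^{2k}$.
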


Recall that the Riemann zeta function $\zeta(s)$  is defined by
\begin{align*}
\zeta(s):=\sum\limits_{n = 1}^\infty {\frac {1}{n^{s}}}\quad (\Re(s)>1).
\end{align*}
When $s=2m\ (m\in \N)$ is an even number Euler proved the famous formula
\begin{align}
\zeta(2m) = -\frac12\frac{B_{2m}}{(2m)!}(2\pi i)^{2m},
\end{align}
where $B_{2m}$ is Bernoulli number which are defined by the generating function
\begin{equation*}
 \frac{x}{e^x-1}=\sum_{n=0}^\infty \frac{B_n}{n!}x^n.
\end{equation*}
In particular, $B_0=1,B_1=-\frac1{2},B_2=\frac1{6},B_4=-\frac1{30},B_6=\frac1{42}$ and $B_8=-\frac{1}{30}$.

From Lemma \ref{lem-tri-hy}, by direct calculations, we obtain
\begin{align}
&\frac{(-1)^n}
{{\cosh \left(\frac{1+i}{2}z \right)}}
\mathop  = \limits^{z \to \tn\pi (1+i)}  2i \left\{\frac1{1+i}\cdot\frac{1}{z-\tn\pi(1+i)}\atop +\sum_{k=1}^\infty (-1)^k \frac{{\bar \zeta}(2k)}{\pi^{2k}}\left(\frac{1+i}{2}\right)^{2k-1}\left(z-\tn\pi(1+i)\right)^{2k-1} \right\},\label{eq-cosh-1}\\
&\frac{(-1)^n}
{{\cosh \left(\frac{1-i}{2}z \right)}}
\mathop  = \limits^{z \to \tn\pi (i-1)}   2i \left\{\frac1{1-i}\cdot\frac{1}{z-\tn\pi(i-1)}\atop +\sum_{k=1}^\infty (-1)^k \frac{{\bar \zeta}(2k)}{\pi^{2k}}\left(\frac{1-i}{2}\right)^{2k-1}\left(z-\tn\pi(i-1)\right)^{2k-1} \right\},\label{eq-cosh-2}\\
&\frac{(-1)^n}
{{\sinh \left(\frac{1+i}{2}z \right)}}
\mathop  = \limits^{z \to n\pi (1+i)}  2\left\{\frac1{1+i}\cdot\frac{1}{z-n\pi(1+i)}\atop +\sum_{k=1}^\infty (-1)^k \frac{{\bar \zeta}(2k)}{\pi^{2k}}\left(\frac{1+i}{2}\right)^{2k-1}\left(z-n\pi(1+i)\right)^{2k-1} \right\},\label{eq-sinh-1}\\
&\frac{(-1)^n}
{{\sinh \left(\frac{i-1}{2}z \right)}}
\mathop  = \limits^{z \to n\pi (1-i)}  2\left\{\frac1{i-2}\cdot\frac{1}{z-n\pi(1-i)}\atop +\sum_{k=1}^\infty (-1)^k \frac{{\bar \zeta}(2k)}{\pi^{2k}}\left(\frac{i-1}{2}\right)^{2k-1}\left(z-n\pi(1-i)\right)^{2k-1} \right\}.\label{eq-sinh-2}
\end{align}

\begin{thm}\label{thm-Integ-series-1} For any $a\geq 0$, we have
\begin{align*}
\frac{(1-i^{a+1})}{\pi^a (1+i)^{a-1}}\int_0^\infty \frac{x^{a}dx}{(\cos x+\cosh x)^2}
&=2\pi\sum_{n=1}^\infty \frac{\tn^a\sinh(\tn \pi)}{\cosh^3(\tn \pi)}
 -a\sum_{n=1}^\infty \frac{\tn^{a-1}}{\cosh^2(\tn \pi)}.
\end{align*}
\end{thm}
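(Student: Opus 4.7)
The plan is to apply the residue theorem to $F(z)=z^a/(\cos z+\cosh z)^2$ on contours exhausting the open first quadrant. The starting point is the factorization
\begin{equation*}
\cos z+\cosh z=2\cosh\!\left(\frac{(1+i)z}{2}\right)\cosh\!\left(\frac{(1-i)z}{2}\right),
\end{equation*}
obtained from $\cosh A+\cosh B=2\cosh\!\left(\frac{A+B}{2}\right)\cosh\!\left(\frac{A-B}{2}\right)$ with $A=z$ and $B=iz$. In the open first quadrant only the first factor vanishes, at the simple zeros $z_n=\tn\pi(1+i)$, $n\ge 1$; at each $z_n$ the second factor equals $\cosh(\tn\pi)\ne 0$, since $\frac{(1-i)z_n}{2}=\tn\pi$. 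Hence $F$ has a double pole at each $z_n$ and no other singularities in the closed first quadrant.

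I would first integrate $F$ around a sequence $C_N$ of quarter-square contours with vertices $0,\,R_N,\,R_N(1+i),\,iR_N$, letting $R_N\to\infty$ through values that keep the two outer sides uniformly away from the diagonal $\arg z=\pi/4$. On the real side the contribution converges to $I_a:=\int_0^\infty x^a/(\cos x+\cosh x)^2\,dx$. On the imaginary side, parametrizing $z=iy$ with $y:R_N\to 0$ and using $\cos(iy)+\cosh(iy)=\cos y+\cosh y$, the contribution tends to $-i^{a+1}I_a$. On each outer side, one of the four exponentials in $\cos z+\cosh z=\frac{1}{2}(e^z+e^{-z}+e^{iz}+e^{-iz})$ strictly dominates, so $|\cos z+\cosh z|$ grows exponentially there, overwhelming the polynomial factor $z^a$ and killing that contribution in the limit.

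Next, I would compute the residue at $z_n$. Setting $w:=z-z_n$, the identity $\cosh\!\left(\frac{(1+i)z}{2}\right)=i(-1)^{n-1}\sinh\!\left(\frac{(1+i)w}{2}\right)$, which follows from $\frac{(1+i)z_n}{2}=\tn\pi i$ together with $\cos(\tn\pi)=0$ and $\sin(\tn\pi)=(-1)^{n-1}$, lets me decompose $(z-z_n)^2F(z)=H(w)G(z)$ with
\begin{equation*}
H(w):=\frac{-w^2}{4\sinh^2\!\left(\frac{(1+i)w}{2}\right)},\qquad G(z):=\frac{z^a}{\cosh^2\!\left(\frac{(1-i)z}{2}\right)}.
\end{equation*}
Here $G$ is holomorphic at $z_n$, and $H$ is even in $w$ with $H(0)=i/2$, so $H'(0)=0$ and the residue formula collapses to $\Res_{z=z_n}F(z)=\frac{i}{2}G'(z_n)$. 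Differentiating $G$, evaluating at $z_n$ (where $\frac{(1-i)z_n}{2}=\tn\pi$), and using the identity $(1+i)^a(1-i)=2(1+i)^{a-1}$, I obtain
\begin{equation*}
\Res_{z=z_n}F(z)=\frac{i(1+i)^{a-1}}{2}\left[\frac{a(\tn\pi)^{a-1}}{\cosh^2(\tn\pi)}-\frac{2(\tn\pi)^a\sinh(\tn\pi)}{\cosh^3(\tn\pi)}\right].
\end{equation*}

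The residue theorem then gives $(1-i^{a+1})I_a=2\pi i\sum_{n\ge 1}\Res_{z=z_n}F(z)$; dividing by $\pi^a(1+i)^{a-1}$ and regrouping the powers of $\pi$ in the summand produces the stated identity. The main obstacle is the uniform exponential-decay estimate on the two outer sides of $C_N$: since the $z_n$ accumulate along the diagonal, the heights $R_N$ must be selected carefully (for instance, so each outer side passes midway between two consecutive $z_n$) to keep $|\cos z+\cosh z|$ uniformly large on the contour; once that is arranged, the remainder of the argument is careful bookkeeping of the factors of $i$ and $1\pm i$ in the residue computation.
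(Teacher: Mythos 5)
Your proposal is correct and follows essentially the same route as the paper: a contour integral of $z^a/(\cos z+\cosh z)^2$ over the first quadrant, with double poles at $z_n=\tn\pi(1+i)$, the imaginary-axis leg contributing $-i^{a+1}$ times the real-axis integral, and the residues matching the paper's \eqref{residue-case-1} exactly (the paper's coefficients $-(1+i)^{a-3}$ and $(1-i)(1+i)^{a-2}$ equal your $\tfrac{i}{2}(1+i)^{a-1}$ and $-i(1+i)^{a-1}$). Your quarter-square contour and the parity trick $H'(0)=0$ are cosmetic variants, and your explicit attention to choosing the contour to stay away from the poles accumulating on the diagonal is in fact more careful than the paper's bare assertion that the arc contribution is $o(1)$.
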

\begin{proof}
Consider
\begin{align*}
\lim_{R\rightarrow\infty }\int_{C_R} \frac{z^{a}dz}{(\cos z+\cosh z)^2}=\lim_{R\rightarrow\infty }\int_{C_R} F(z)dz,
\end{align*}
where $C_R$ denotes the positively oriented quarter-circular contour consisting of the interval $[0,R]$, the quarter-circle $\Gamma_R$ with $|z|=R$ and $0\leq \arg\leq \pi/2$, and $[iR,0]$ (i.e., the line segment from $i R$ to $0$ on the imaginary axis), please see the following figure.
\begin{center}
\includegraphics[height=2in]{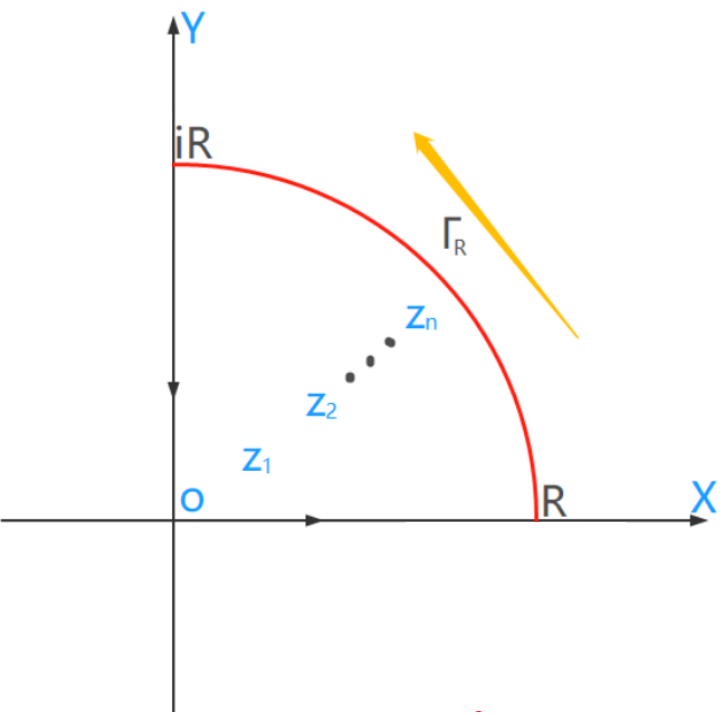}
\end{center}
Clearly, there exist poles of order $2$ in the present case when
\begin{align*}
\cos z+\cosh z=2\cos\left\{\frac1{2}(z+iz)\right\}\cos\left\{\frac1{2}(z-iz)\right\}=0.
\end{align*}
Hence, there exist one set of poles at
\begin{align*}
z_n:=\frac{(2n-1)\pi i}{1+i}=\tn (1+i)\pi ,\quad n\in \Z,
\end{align*}
and
\begin{align*}
s_n:=\frac{(2n-1)\pi i}{1-i}=\tn(i-1)\pi ,\quad n\in \Z.
\end{align*}
The only poles lying inside $C_R$ are $z_n,\ n\geq 1,\ |z_n|<R$. From \eqref{eq-cosh-1}, we have
\begin{align*}
&\frac{1}
{{\cosh \left(\frac{1+i}{2}z \right)}}\mathop  = \limits^{z \to z_n} (-1)^n i \left\{\frac2{1+i}\cdot\frac{1}{z-z_n}-\frac{1+i}{12}\left(z-z_n\right)+o(z-z_n)\right\}.
\end{align*}
Further, one obtains
\begin{align*}
&\frac{1}
{{\cosh^2 \left(\frac{1+i}{2}z \right)}}\mathop  = \limits^{z \to z_n} -\frac4{(1+i)^2} \frac{1}{(z-z_n)^2}+\frac1{3}+o(1).
\end{align*}
Hence, if $z\rightarrow z_n$ then
\begin{align*}
&F(z)\mathop  = \limits^{z \to z_n} \frac{z^a}{4{\cosh^2 \left(\frac{1-i}{2}z \right)}}\left\{-\frac4{(1+i)^2} \frac{1}{(z-z_n)^2}+\frac1{3}+o(1)\right\}.
\end{align*}
The residue $\Res[F(z),z_n]$ at each such pole is give by
\begin{align}\label{residue-case-1}
\Res[F(z),z_n]&=\lim_{z\rightarrow z_n}\frac{d}{dz} \left\{(z-z_n)^2 \frac{z^a}{(\cos z+\cosh z)^2}\right\}\nonumber\\
&=\lim_{z\rightarrow z_n}\frac{d}{dz} \frac{z^a}{4{\cosh^2 \left(\frac{1-i}{2}z \right)}}\left\{-\frac4{(1+i)^2}+\frac1{3}(z-z_n)^2+o((z-z_n)^2)\right\}\nonumber\\
&=-a(1+i)^{a-3}\frac{\left(\tn\pi\right)^{a-1}}{\cosh^2\left(\tn\pi\right)}+(1-i)(1+i)^{a-2} \left(\tn\pi\right)^a \frac{\sinh\left(\tn\pi\right)}{\cosh^3\left(\tn\pi\right)}.
\end{align}
For the integral over $[iR,0]$, we let $z=ir$. Hence,
\begin{align}\label{residue-case-2}
\int_{iR}^0 \frac{z^adz}{(\cos z+\cosh z)^2}=\int_R^0 \frac{i^{a+1}rdr}{(\cos ir+\cosh ir)^2}=-i^{a+1} \int_0^R \frac{r^adr}{(\cos r+\cosh r)^2}.
\end{align}
It is easy to show that, as $R\rightarrow \infty$
\begin{align}\label{residue-case-3}
\int_{\Gamma_R}\frac{z^adz}{(\cos z+\cosh z)^2}=o(1).
\end{align}
Applying the residue theorem, letting $R\rightarrow \infty$, and collecting \eqref{residue-case-1}-\eqref{residue-case-3}, we conclude that
\begin{align}\label{residue-case-4}
(1-i^{a+1})\int_{0}^\infty \frac{z^adz}{(\cos z+\cosh z)^2}=2\pi i \sum_{n=1}^\infty \Res[F(z),z_n].
\end{align}
This completes the proof of the theorem.
\end{proof}

Letting $a=1,3$ yields
\begin{align}
&2\int_0^\infty \frac{xdx}{(\cos x+\cosh x)^2}=-\pi  \sum_{n=1}^\infty \frac{1}{\cosh^2\left(\tn\pi\right)}+\pi^2\sum_{n=1}^\infty \frac{(2n-1)\sinh\left(\tn\pi\right)}{\cosh^3\left(\tn\pi\right)}
\end{align}
and
\begin{align}\label{case-2}
&0=-3\sum_{n=1}^\infty \frac{\left(\tn\pi\right)^2}{\cosh^2\left(\tn\pi\right)}+2\sum_{n=1}^\infty \frac{\left(\tn\pi\right)^3\sinh\left(\tn\pi\right)}{\cosh^3\left(\tn\pi\right)}.
\end{align}
From \cite{XuZhao-2022}, we find that
\begin{align*}
\sum_{n=1}^\infty \frac{(2n-1)^2}{\cosh^2\left(\tn\pi\right)}=\frac{\Gamma^8(1/4)}{192\pi^6}.
\end{align*}
Hence, together with \eqref{case-2} this gives
\begin{align}\label{new-case-1}
\sum_{n=1}^\infty \frac{(2n-1)^3\sinh\left(\tn\pi\right)}{\cosh^3\left(\tn\pi\right)}=\frac{\Gamma^8(1/4)}{64\pi^7}.
\end{align}

\begin{thm}\label{thm-Integ-series-2} For any $a\geq 4$, we have
\begin{align*}
\frac{(1-i^{a+1})}{\pi^a(1+i)^{a-1}}\int_0^\infty \frac{x^{a}dx}{(\cos x-\cosh x)^2}
=a \sum_{n=1}^\infty \frac{n^{a-1}}{\sinh^2(n\pi)}
 -2\pi \sum_{n=1}^\infty \frac{n^a\cosh(n \pi)}{\sinh^3(n\pi)}.
\end{align*}
\end{thm}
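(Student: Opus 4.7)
The plan is to mirror the contour-integration argument of Theorem~\ref{thm-Integ-series-1} verbatim, but with the new integrand
$$F(z)=\frac{z^{a}}{(\cos z-\cosh z)^{2}}$$
taken around the same positively oriented quarter-circle $C_R$ consisting of $[0,R]$, the arc $\Gamma_R$, and $[iR,0]$. Using the factorisation $\cos z-\cosh z=-2\sinh((1+i)z/2)\sinh((1-i)z/2)$, the singularities of $F$ inside $C_R$ are the double poles at $z_n=n\pi(1+i)$ for $n\ge 1$, arising from the zero of $\sinh((1+i)z/2)$; at each such point the other factor $\sinh((1-i)z/2)$ is analytic and equals $\sinh(n\pi)\ne 0$. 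The hypothesis $a\ge 4$ is precisely what kills the would-be singularity at the origin, since $(\cos z-\cosh z)^{2}\sim z^{4}$ there, so $F$ is holomorphic at $0$ and the real-axis integral converges.

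For the residues I will apply the expansion \eqref{eq-sinh-1} to $1/\sinh((1+i)z/2)$ at $z_n$. The key bookkeeping observation is that this expansion contains only odd powers of $(z-z_n)$, so squaring it automatically kills the $(z-z_n)^{-1}$ term and leaves a clean double pole with leading coefficient $(2/(1+i))^{2}=-2i$. Hence
$$\Res[F,z_n]=-\tfrac{i}{2}\,\frac{d}{dz}\!\left[\frac{z^{a}}{\sinh^{2}((1-i)z/2)}\right]_{z=z_n}.$$
Differentiating and exploiting $(1-i)z_n/2=n\pi$ together with $(1-i)(1+i)^{a}=2(1+i)^{a-1}$ should collapse this to
$$\Res[F,z_n]=-\tfrac{i}{2}(1+i)^{a-1}\!\left[\frac{a(n\pi)^{a-1}}{\sinh^{2}(n\pi)}-\frac{2(n\pi)^{a}\cosh(n\pi)}{\sinh^{3}(n\pi)}\right].$$

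On the imaginary edge, setting $z=ir$ and using $\cos(ir)-\cosh(ir)=\cosh r-\cos r$ gives $\int_{iR}^{0}F(z)\,dz=-i^{a+1}\int_{0}^{R}r^{a}(\cos r-\cosh r)^{-2}\,dr$, so the two axial pieces combine into $(1-i^{a+1})\int_{0}^{\infty}$. Assembling everything by the residue theorem, the constant $2\pi i\cdot(-i/2)=\pi$ simplifies, and after dividing through by $\pi^{a}(1+i)^{a-1}$ the identity in the statement drops out.

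The main technical obstacle is the decay $\int_{\Gamma_R}F(z)\,dz=o(1)$, because the poles $z_n$ all lie on the diagonal ray $\arg z=\pi/4$, cutting straight through the middle of $\Gamma_R$. I plan to handle this by letting $R\to\infty$ along the subsequence $R_N=(N+\tfrac12)\pi\sqrt{2}$, which places the arc halfway between consecutive poles: away from $\theta=\pi/4$ at least one of the two $\sinh$ factors has exponentially large modulus, while near $\theta=\pi/4$ the pole-avoidance yields a uniform lower bound on $|\cos z-\cosh z|$ strong enough to dominate $|z|^{a}$. A secondary care-point, flagged above, is confirming that the $(z-z_n)^{-1}$ Laurent coefficient of $F$ really does vanish, since only then does the residue collapse to the single-derivative formula used in Theorem~\ref{thm-Integ-series-1}.
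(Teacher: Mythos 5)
Your proposal is correct and follows essentially the same route as the paper: the same quarter-circle contour, the factorization $\cos z-\cosh z=-2\sinh((1+i)z/2)\sinh((1-i)z/2)$ with double poles only at $z_n=n\pi(1+i)$ inside $C_R$, the residue computed via $\lim_{z\to z_n}\frac{d}{dz}[(z-z_n)^2F(z)]$ using the odd-power Laurent expansion of $1/\sinh$, and the substitution $z=ir$ on the imaginary edge producing the factor $1-i^{a+1}$; your residue value agrees with the paper's after noting $(1+i)^{a-3}=-\tfrac{i}{2}(1+i)^{a-1}$ and $(i-1)(1+i)^{a-2}=i(1+i)^{a-1}$. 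Your two added care-points --- holomorphy at the origin for $a\ge 4$ and the choice $R_N=(N+\tfrac12)\pi\sqrt2$ to push the arc between the poles on the ray $\arg z=\pi/4$ --- are sound refinements of details the paper merely asserts.
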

\begin{proof}
Consider
\begin{align*}
\lim_{R\rightarrow\infty }\int_{C_R} \frac{z^{a}dz}{(\cos z-\cosh z)^2}=\lim_{R\rightarrow\infty }\int_{C_R} G(z)dz,
\end{align*}
where $C_R$ denotes the positively oriented quarter-circular contour consisting of the interval $[0,R]$; the quarter-circle $\Gamma_R$ with $|z|=R$ and $0\leq \arg\leq \pi/2$; and the segment $[i R,0]$ on the imaginary axis. Clearly, there exist poles of order $2$ in the present case when
\begin{align*}
\cos z-\cosh z=-2\sin\left\{\frac1{2}(z+iz)\right\}\sin\left\{\frac1{2}(z-iz)\right\}=0.
\end{align*}
Hence, there exist one set of poles at $t_n=n\pi(1-i)$ and $y_n=n\pi(1+i)$, where $n\in\N$. Those lying on the interior of $C_R$ are $y_n=n\pi (1+i),n\geq 1,|y_n|<R$. Hence, by \eqref{eq-sinh-1}, we have
\begin{align*}
&G(z)\mathop  = \limits^{z \to y_n} \frac{z^a}{4{\sinh^2 \left(\frac{i-1}{2}z \right)}}\left\{\frac4{(1+i)^2} \frac{1}{(z-y_n)^2}-\frac1{3}+o(1)\right\}.
\end{align*}
Further, the residue is
\begin{align}\label{residue-case-g-1}
\Res[G(z),y_n]&=\lim_{z\rightarrow y_n}\frac{d}{dz} \left\{(z-y_n)^2 \frac{z^a}{(\cos z-\cosh z)^2}\right\}\nonumber\\
&=\lim_{z\rightarrow y_n}\frac{d}{dz} \frac{z^a}{4{\sinh^2 \left(\frac{i-1}{2}z \right)}}\left\{\frac4{(1+i)^2} -\frac1{3}(z-y_n)^2+o((z-y_n)^2)\right\}\nonumber\\
&=a(1+i)^{a-3}\frac{(n\pi)^{a-1}}{\sinh^2(n\pi)}+(i-1)(1+i)^{a-2}\frac{(n\pi)^a\cosh(n\pi)}{\sinh^3(n\pi)}.
\end{align}
The integral over $\Gamma_R$ tends to $0$ as $R\rightarrow\infty$. Hence, by \eqref{residue-case-g-1} and the residue theorem, we find that
\begin{align}\label{residue-case-g-2}
(1-i^{a+1})\int_{0}^\infty \frac{z^adz}{(\cos z-\cosh z)^2}=2\pi i \sum_{n=1}^\infty \Res[G(z),y_n].
\end{align}
Thus, we have finished the proof of the theorem.
\end{proof}

Setting $a=7$ yields
\begin{align}
\sum_{n=1}^\infty \frac{n^7 \cosh(n\pi)}{\sinh^3(n\pi)}=\frac{7}{2\pi} \sum_{n=1}^\infty \frac{n^6}{\sinh^2(n\pi)}.
\end{align}
From \cite{XuZhao-2022}, we find that
\begin{align*}
\sum_{n=1}^\infty \frac{n^6}{\sinh^2(n\pi)}=\frac{\Gamma^{16}(1/4)}{7\cdot 2^{14}\pi^{12}}.
\end{align*}
Hence,
\begin{align}
\sum_{n=1}^\infty \frac{n^7 \cosh(n\pi)}{\sinh^3(n\pi)}=\frac{\Gamma^{16}(1/4)}{2^{15}\pi^{13}}.
\end{align}

In general, setting $a=4p+1$ and $4p-1$ in Theorems \ref{thm-Integ-series-1} and \ref{thm-Integ-series-2}, we get the following corollaries.
\begin{cor}\label{cor-Int-cos-h-1} For any integer $p\geq 0$,
\begin{align*}
\frac{(-1)^p2^{2p+1}}{\pi^{4p+1}}\int_0^\infty \frac{x^{4p+1}dx}{(\cos x+\cosh x)^2}
&=  \pi \sum_{n=1}^\infty \frac{(2n-1)^{4p+1}\sinh\left(\tn\pi\right)}{\cosh^3\left(\tn\pi\right)}
 - (4p+1)  \sum_{n=1}^\infty \frac{(2n-1)^{4p}}{\cosh^2\left(\tn\pi\right)}.
\end{align*}
\end{cor}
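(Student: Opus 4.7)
The plan is to obtain this corollary as a direct specialization of Theorem \ref{thm-Integ-series-1} with $a=4p+1$, since both the integral in the corollary and the hyperbolic sums match the general form in that theorem up to rescaling. So the strategy is purely computational bookkeeping: evaluate the prefactor $(1-i^{a+1})/(\pi^a(1+i)^{a-1})$ at $a=4p+1$, rewrite $\tn^a$ and $\tn^{a-1}$ in terms of powers of $(2n-1)$ to clear the half-integer denominators, and then clear a common factor.

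First I would compute the scalar factor. With $a=4p+1$, one has $i^{a+1}=i^{4p+2}=-1$, so $1-i^{a+1}=2$. Also, using $(1+i)^2=2i$, we get $(1+i)^{a-1}=(1+i)^{4p}=(2i)^{2p}=2^{2p}(-1)^p$. Hence
\begin{equation*}
\frac{1-i^{a+1}}{\pi^a(1+i)^{a-1}}\bigg|_{a=4p+1}=\frac{2}{2^{2p}(-1)^p\pi^{4p+1}}.
\end{equation*}
Next I would rewrite the right-hand side of Theorem \ref{thm-Integ-series-1}. Since $\tn=(2n-1)/2$, one has $\tn^{4p+1}=(2n-1)^{4p+1}/2^{4p+1}$ and $\tn^{4p}=(2n-1)^{4p}/2^{4p}$, so the right-hand side becomes
\begin{equation*}
\frac{1}{2^{4p}}\left[\pi\sum_{n=1}^\infty\frac{(2n-1)^{4p+1}\sinh(\tn\pi)}{\cosh^3(\tn\pi)}-(4p+1)\sum_{n=1}^\infty\frac{(2n-1)^{4p}}{\cosh^2(\tn\pi)}\right].
\end{equation*}

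Finally I would equate the two sides and multiply through by $2^{4p}$. The factor on the left becomes $\frac{2\cdot 2^{4p}}{2^{2p}(-1)^p\pi^{4p+1}}=\frac{2^{2p+1}}{(-1)^p\pi^{4p+1}}=\frac{(-1)^p 2^{2p+1}}{\pi^{4p+1}}$, and the right-hand side reduces to the desired combination of hyperbolic sums. This yields the identity stated in the corollary.

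Since the whole argument is just substitution and arithmetic, there is no genuine obstacle; the only thing to be careful about is the sign bookkeeping in $(1+i)^{4p}=2^{2p}i^{2p}=(-1)^p 2^{2p}$ and the placement of $\pi$ after dividing by $\pi^a$ with $a=4p+1$, both of which are routine. Thus the corollary follows immediately from Theorem \ref{thm-Integ-series-1}, and no new ingredients are required.
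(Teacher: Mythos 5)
Your proposal is correct and is exactly the paper's own route: the paper obtains this corollary by setting $a=4p+1$ in Theorem \ref{thm-Integ-series-1}, and your evaluation of $(1-i^{4p+2})=2$, $(1+i)^{4p}=(-1)^p2^{2p}$, and the rescaling $\tn^{k}=(2n-1)^{k}/2^{k}$ reproduces the stated identity precisely.
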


\begin{cor}\label{cor-Int-cos-h-2} For any integer $p\geq 1$,
\begin{align*}
 \frac{(-1)^p }{2^{2p-1}\pi^{4p+1}}\int_0^\infty \frac{x^{4p+1}dx}{(\cos x-\cosh x)^2}
&=(4p+1)\sum_{n=1}^\infty \frac{n^{4p}}{\sinh^2(n\pi)}
-2 \pi \sum_{n=1}^\infty \frac{n^{4p+1}\cosh(n\pi)}{\sinh^3(n\pi)}.
\end{align*}
\end{cor}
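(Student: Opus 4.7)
The corollary is an immediate specialization of Theorem~\ref{thm-Integ-series-2} to the value $a = 4p+1$. For $p \ge 1$ this gives $a \ge 5 \ge 4$, so the hypothesis of the theorem is satisfied and no additional analysis is required. With this choice of $a$, the right-hand side of the theorem,
$$a \sum_{n=1}^\infty \frac{n^{a-1}}{\sinh^2(n\pi)} - 2\pi \sum_{n=1}^\infty \frac{n^a \cosh(n\pi)}{\sinh^3(n\pi)},$$
becomes exactly the right-hand side of the corollary. The only nontrivial task, therefore, is to simplify the scalar prefactor $(1-i^{a+1})/[\pi^a (1+i)^{a-1}]$ appearing on the left.

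To evaluate this coefficient I would proceed in two short steps. Since $a+1 = 4p+2$, one has $i^{a+1} = (i^4)^p \cdot i^2 = -1$, so the numerator $1-i^{a+1}$ equals $2$. For the denominator, using $(1+i)^2 = 2i$ gives
$$(1+i)^{a-1} = (1+i)^{4p} = \bigl((1+i)^2\bigr)^{2p} = (2i)^{2p} = 2^{2p}(-1)^p.$$
Combining these two computations, the prefactor reduces to
$$\frac{1-i^{a+1}}{\pi^a (1+i)^{a-1}} = \frac{2}{\pi^{4p+1} \cdot 2^{2p}(-1)^p} = \frac{(-1)^p}{2^{2p-1} \pi^{4p+1}},$$
which matches the form stated in the corollary (using $(-1)^{-p} = (-1)^p$).

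I do not anticipate any real obstacle here: all the analytic work, including the contour-integral setup, the residue computation at the poles $y_n = n\pi(1+i)$, and the vanishing of the quarter-circle integral (which requires the hypothesis $a \ge 4$), is already encapsulated in the proof of Theorem~\ref{thm-Integ-series-2}. The corollary simply records the identity in its cleanest form at the arithmetic progression $a \equiv 1 \pmod 4$, where the numerator $1-i^{a+1}$ is nonzero and the denominator powers of $1+i$ collapse to real constants, making the coefficient manifestly rational times a power of $\pi$.
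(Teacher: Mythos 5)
Your proposal is correct and matches the paper's own derivation: Corollary \ref{cor-Int-cos-h-2} is obtained exactly by setting $a=4p+1$ in Theorem \ref{thm-Integ-series-2} and simplifying the prefactor via $i^{4p+2}=-1$ and $(1+i)^{4p}=(2i)^{2p}=2^{2p}(-1)^p$, which you carry out correctly. The paper states this specialization without writing out the arithmetic, so your write-up simply makes explicit what the authors leave implicit.
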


\begin{cor}\label{cor-Int-sin-h-1} For any positive integer $p$,
\begin{align}
\sum_{n=1}^\infty \frac{(2n-1)^{4p-1}\sinh\left(\tn\pi\right)}{\cosh^3\left(\tn\pi\right)}
=\frac{4p-1}{\pi}\sum_{n=1}^\infty \frac{(2n-1)^{4p-2}}{\cosh^2\left(\tn\pi\right)}.
\end{align}
\end{cor}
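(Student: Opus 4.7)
The plan is to derive this corollary as a direct specialization of Theorem \ref{thm-Integ-series-1} with the choice $a=4p-1$. The key arithmetic observation is that for this value of $a$ we have $i^{a+1}=i^{4p}=1$, so the prefactor $(1-i^{a+1})$ appearing on the left-hand side of Theorem \ref{thm-Integ-series-1} vanishes identically. Hence the entire integral contribution collapses, and what remains is a linear relation between the two hyperbolic series on the right.

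Before invoking this, I would confirm that the specialization is legitimate, i.e. that the integral $\int_0^\infty x^{4p-1}\,dx/(\cos x+\cosh x)^2$ is convergent for every $p\geq 1$. At the origin, $\cos x+\cosh x=2+O(x^4)$ stays bounded away from zero, while as $x\to\infty$ the denominator grows like $e^{2x}/4$, so the integrand decays like $x^{4p-1}e^{-2x}$. Thus multiplying $0$ by this integral genuinely produces $0$, not an indeterminate form, and Theorem \ref{thm-Integ-series-1} yields
\begin{align*}
0 = 2\pi\sum_{n=1}^\infty \frac{\tn^{4p-1}\sinh(\tn\pi)}{\cosh^3(\tn\pi)}
-(4p-1)\sum_{n=1}^\infty \frac{\tn^{4p-2}}{\cosh^2(\tn\pi)}.
\end{align*}
From here, only bookkeeping remains: substituting $\tn=(2n-1)/2$, factoring out $1/2^{4p-1}$ from the first sum and $1/2^{4p-2}$ from the second, and dividing through by $\pi/2^{4p-2}$ produces exactly the stated identity. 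As a sanity check one can verify the case $p=1$ against the two explicit evaluations already recorded in the paper, namely $\sum_{n\geq1}(2n-1)^2/\cosh^2(\tn\pi)=\Gamma^8(1/4)/(192\pi^6)$ and $\sum_{n\geq1}(2n-1)^3\sinh(\tn\pi)/\cosh^3(\tn\pi)=\Gamma^8(1/4)/(64\pi^7)$; their ratio is indeed $3/\pi$, matching the formula at $p=1$.

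Because all of the analytic heavy lifting, namely the contour integration, the residue computation based on \eqref{eq-cosh-1}, and the vanishing of the arc integral, is already encapsulated in Theorem \ref{thm-Integ-series-1}, there is no substantive obstacle to overcome. The only aspect requiring any care is the tracking of the powers of $2$ and $(1+i)$ (the latter being immaterial once its coefficient is multiplied by the vanishing factor $1-i^{4p}$), which is entirely routine.
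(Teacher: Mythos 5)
Your proposal is correct and follows exactly the paper's route: the authors obtain this corollary precisely by setting $a=4p-1$ in Theorem \ref{thm-Integ-series-1}, where $1-i^{4p}=0$ kills the integral term and leaves the stated linear relation after rescaling $\tn=(2n-1)/2$. Your added remarks on convergence of the integral and the $p=1$ consistency check are sound but not needed beyond what the theorem already provides.
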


\begin{cor}\label{cor-Int-sin-h-2} For any positive integer $p\geq 2$,
\begin{align}
\sum_{n=1}^\infty \frac{n^{4p-1}\cosh(n\pi)}{\sinh^3(n\pi)}=\frac{4p-1}{2\pi} \sum_{n=1}^\infty \frac{n^{4p-2}}{\sinh^2(n\pi)}.
\end{align}
\end{cor}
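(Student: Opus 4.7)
The plan is to obtain Corollary \ref{cor-Int-sin-h-2} as an immediate specialization of Theorem \ref{thm-Integ-series-2}. Concretely, I would set $a=4p-1$ in that theorem; since $p\geq 2$ we have $a\geq 7\geq 4$, so the hypothesis on $a$ is satisfied.

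The decisive simplification is that $a+1=4p$ is a multiple of $4$, so $i^{a+1}=i^{4p}=(i^4)^p=1$. This forces the prefactor $(1-i^{a+1})/(\pi^a(1+i)^{a-1})$ on the left-hand side of Theorem \ref{thm-Integ-series-2} to vanish, so the integral term drops out entirely regardless of its value. What remains is exactly the relation
\begin{align*}
0= (4p-1)\sum_{n=1}^\infty \frac{n^{4p-2}}{\sinh^2(n\pi)} - 2\pi\sum_{n=1}^\infty \frac{n^{4p-1}\cosh(n\pi)}{\sinh^3(n\pi)},
\end{align*}
and dividing through by $2\pi$ yields the claimed identity.

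There is essentially no obstacle: the exponential decay $1/\sinh(n\pi)=O(e^{-n\pi})$ guarantees absolute convergence of both sums, and the restriction $p\geq 2$ is precisely what allows us to invoke Theorem \ref{thm-Integ-series-2} (the case $p=1$ corresponds to $a=3<4$, which falls outside the theorem's hypothesis and would require a separate contour-integration argument tailored to handle the boundary behavior of $z^3/(\cos z-\cosh z)^2$ on $\Gamma_R$). This vanishing mechanism is the exact analog of the one already exploited in Corollary \ref{cor-Int-sin-h-1}, where specializing Theorem \ref{thm-Integ-series-1} to $a=4p-1$ produced the $\cosh$-denominator counterpart by the same cancellation of the coefficient $1-i^{a+1}$.
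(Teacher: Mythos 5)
Your proposal is correct and is exactly the paper's own derivation: the authors obtain Corollary \ref{cor-Int-sin-h-2} by setting $a=4p-1$ in Theorem \ref{thm-Integ-series-2}, where $i^{a+1}=i^{4p}=1$ kills the integral term and leaves the stated relation between the two hyperbolic sums. Your remarks on the role of the hypothesis $a\geq 4$ (hence $p\geq 2$) and the parallel with Corollary \ref{cor-Int-sin-h-1} match the paper as well.
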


\begin{re}
It is possible to establish the explicit relations between the Berndt-type integrals with $b\geq 3$ in \eqref{question-1}, \eqref{question-2} and some hyperbolic summations by using the method of contour integration.
\end{re}

\section{Berndt-type integrals}
In this section, we first provide a weaker version of Theorem~\ref{thm-main34} whose proof is much shorter.
\begin{thm}
For any positive integer $p$, the integrals
\begin{align}
\int_0^\infty \frac{x^{4p+1}}{(\cos x+\cosh x)^2}dx\quad \text{and}\quad \int_0^\infty \frac{x^{4p+1}}{(\cos x-\cosh x)^2}dx
\end{align}
can be evaluated by $\Gamma(1/4)$ and $\pi$.
\end{thm}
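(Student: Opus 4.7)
The plan is to reduce both integrals to the hyperbolic sums already controlled by Theorems \ref{thm-main1} and \ref{thm-main2}, after which the conclusion is immediate. Concretely, Corollary \ref{cor-Int-cos-h-1} rewrites
\[
\int_0^\infty \frac{x^{4p+1}}{(\cos x+\cosh x)^2}\,dx
\]
as a rational multiple of $\pi^{4p+1}$ times
\[
\pi\sum_{n=1}^\infty \frac{(2n-1)^{4p+1}\sinh(\tn\pi)}{\cosh^3(\tn\pi)} \;-\; (4p+1)\sum_{n=1}^\infty \frac{(2n-1)^{4p}}{\cosh^2(\tn\pi)},
\]
and Corollary \ref{cor-Int-cos-h-2} rewrites
\[
\int_0^\infty \frac{x^{4p+1}}{(\cos x-\cosh x)^2}\,dx
\]
as a rational multiple of $\pi^{4p+1}$ times
\[
(4p+1)\sum_{n=1}^\infty \frac{n^{4p}}{\sinh^2(n\pi)} \;-\; 2\pi\sum_{n=1}^\infty \frac{n^{4p+1}\cosh(n\pi)}{\sinh^3(n\pi)}.
\]
So it suffices to show that each of the four sums appearing here is a $\Q$-linear combination of monomials of the form $\Gamma(1/4)^{8k}/\pi^{\ell}$ for suitable nonnegative integers $k,\ell$.

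For the first integral I would invoke Theorem \ref{thm-main2}: matching the exponent $4p+1$ with the pattern $4m+1$ gives $m=p$, and matching $4p$ with $4m$ gives $m=p$ as well, so the two sums lie respectively in $\Gamma(1/4)^{8p+8}/\pi^{6p+6}\,\Q + \Gamma(1/4)^{8p}/\pi^{6p+2}\,\Q$ and $\Gamma(1/4)^{8p}/\pi^{6p+1}\,\Q$. For the second integral I would invoke Theorem \ref{thm-main1}: matching $4p$ with $4m-4$ gives $m=p+1\ge 2$, so the Kronecker symbol vanishes and the first sum lies in $\Gamma(1/4)^{8p+4}/\pi^{6p+1}\,\Q+\Gamma(1/4)^{8p+8}/\pi^{6p+3}\,\Q$; matching $4p+1$ with $4m-3$ again gives $m=p+1$, so the second sum lies in $\Gamma(1/4)^{8p+8}/\pi^{6p+4}\,\Q+\Gamma(1/4)^{8p}/\pi^{6p}\,\Q$. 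In every case the sum is a $\Q$-linear combination of monomials in $\Gamma(1/4)$ and $\pi$, hence so is each integral after multiplying by the rational multiple of $\pi^{4p+1}$ out front.

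The only thing to watch is the index matching: both $4m$ and $4m-2$ occur among the $\sinh^{-2}$ sums of Theorem \ref{thm-main1}, and one has to verify that the exponent $4p$ we actually need in Corollary \ref{cor-Int-cos-h-2} falls into the $4m-4$ family (so that $m=p+1$ is an integer) rather than the $4m-2$ family; similarly for the $4p+1$ exponent in the $\cosh/\sinh^3$ sum, which lands in the $4m-3$ family. Once this bookkeeping is done, the proof is just the observation that a $\Q$-linear combination of expressions in $\Q[\Gamma(1/4),\pi,1/\pi]$ is again in $\Q[\Gamma(1/4),\pi,1/\pi]$, and no further analysis is needed; the stronger quantitative form of the claim, with explicit coefficients, will be addressed later in Theorem \ref{thm-main34}.
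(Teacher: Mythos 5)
Your reduction of both integrals to the four hyperbolic sums via Corollaries \ref{cor-Int-cos-h-1} and \ref{cor-Int-cos-h-2} is exactly the reduction the paper performs, and your index bookkeeping is right: $m=p$ in Theorem \ref{thm-main2} for the $+$ integral, and $m=p+1$ in Theorem \ref{thm-main1} for the $-$ integral, so that the exponents $4p$ and $4p+1$ land in the $4m-4$ and $4m-3$ families and the Kronecker terms vanish. Where you diverge is in how the four sums are evaluated: you invoke Theorems \ref{thm-main1} and \ref{thm-main2} as black boxes, which is precisely how the paper later derives the sharper Theorems \ref{thm-main3Precise} and \ref{thm-main4Precise}; but for this weaker statement the paper deliberately avoids those two theorems (whose proofs occupy the $q$-series and Jacobi-elliptic-function machinery of Sections 3, 6, 7 and 8) and instead quotes \cite[Thm.~3.6]{XuZhao-2022} to write $\sum_{n\ge1} n^{2p}/\sinh^2(ny)$ and $\sum_{n\ge1}(2n-1)^{2p}/\cosh^2((2n-1)y/2)$ as expressions in $x,z,z'$, obtains the $\cosh/\sinh^3$ and $\sinh/\cosh^3$ sums by differentiating in $y$ via \eqref{Bruce-diff-xyz}, and then sets $x=1/2$ using \eqref{special-case-x}. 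Your route is logically valid --- there is no circularity, since neither Theorem \ref{thm-main1Precise} nor Theorem \ref{thm-main2Precise} relies on this statement --- but it trades a short, self-contained argument for a dependence on the two hardest results of the paper, which is exactly what this ``weaker version with a much shorter proof'' was meant to avoid. Two harmless arithmetic slips in your exponent tracking: with $m=p+1$, Theorem \ref{thm-main1} places $\sum n^{4p}/\sinh^2(n\pi)$ in $\frac{\Gamma^{8p}(1/4)}{\pi^{6p+1}}\Q+\frac{\Gamma^{8p+4}(1/4)}{\pi^{6p+3}}\Q$ (not $\Gamma^{8p+4}$ and $\Gamma^{8p+8}$), and $\sum n^{4p+1}\cosh(n\pi)/\sinh^3(n\pi)$ in $\frac{\Gamma^{8p+8}(1/4)}{\pi^{6p+6}}\Q+\frac{\Gamma^{8p}(1/4)}{\pi^{6p+2}}\Q$ (not $\pi^{6p+4}$ and $\pi^{6p}$); neither error affects the qualitative conclusion that everything lies in $\Q[\Gamma(1/4),\pi,1/\pi]$.
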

\begin{proof}
From \cite[Thm. 3.6]{XuZhao-2022}, we know that the series
\begin{align*}
\sum_{n=1}^\infty \frac{n^{2p}}{\sinh^2(ny)}\quad \text{and}\quad \sum_{n=1}^\infty \frac{(2n-1)^{2p}}{\cosh^2((2n-1)y/2)}
\end{align*}
can be expressed in terms of $x,z$ and $z'$. Observing that
\begin{align*}
\sum_{n=1}^\infty \frac{n^{2p+1}\cosh(ny)}{\sinh^3(ny)}=-\frac12\frac{dx}{dy}\frac{d}{dx}\sum_{n=1}^\infty \frac{n^{2p}}{\sinh^2(ny)}
\end{align*}
and
\begin{align*}
\sum_{n=1}^\infty \frac{(2n-1)^{2p+1}\sinh((2n-1)y/2)}{\cosh^3((2n-1)y/2))}=-\frac{dx}{dy}\frac{d}{dx}\sum_{n=1}^\infty \frac{(2n-1)^{2p}}{\cosh^2((2n-1)y/2)}
\end{align*}
with
\begin{align}\label{Bruce-diff-xyz}
dx/dy=-x(1-x)z^2,
\end{align}
(see \cite[pp. 120, Entry 9(i)]{B1991}) we conclude that the two series
\begin{align*}
\sum_{n=1}^\infty \frac{n^{2p+1}\cosh(ny)}{\sinh^3(ny)}\quad\text{and}\quad \sum_{n=1}^\infty \frac{(2n-1)^{2p+1}\sinh((2n-1)y/2)}{\cosh^3((2n-1)y/2))}
\end{align*}
can be expressed in terms of a linear combinations of $x,z,z'$ and $z''$. Further, using \eqref{rel-den},\eqref{den-z-diff} and taking $x=1/2,n=0,1,2$, we obtain that
\begin{align}\label{special-case-x}
y=\pi,z\Big(\frac 1{2}\Big)=\frac{\Gamma^2\Big(\frac 1{4}\Big)}{2\pi^{3/2}},\ z'\Big(\frac 1{2}\Big)=4\frac {\sqrt{\pi}}{\Gamma^2\Big(\frac 1{4}\Big)}\quad {\rm and}\quad z''\Big(\frac 1{2}\Big)=\frac{\Gamma^2\Big(\frac 1{4}\Big)}{2\pi^{3/2}},
\end{align}
where we have used the two classical relations
\[\Gamma(x+1)=x\Gamma(x)\quad{\rm and}\quad \Gamma(x)\Gamma(1-x)=\frac{\pi}{\sin(\pi x)}.\]
Hence, all of the four series
\begin{alignat*}{2}
& \sum_{n=1}^\infty \frac{n^{2p}}{\sinh^2(n\pi)}, && \sum_{n=1}^\infty \frac{(2n-1)^{2p}}{\cosh^2((2n-1)\pi/2)}, \\
& \sum_{n=1}^\infty \frac{n^{2p+1}\cosh(n\pi)}{\sinh^3(n\pi)},\quad && \sum_{n=1}^\infty \frac{(2n-1)^{2p+1}\sinh((2n-1)\pi/2)}{\cosh^3((2n-1)\pi/2))}
\end{alignat*}
can be evaluated by $\Gamma(1/4)$ and $\pi$. Finally, using Corollaries \ref{cor-Int-cos-h-1} and \ref{cor-Int-cos-h-2} with the help of the above, we obtain the desired description.
\end{proof}

From \cite[Cor. 3.7 and Cor. 3.8]{XuZhao-2022}, we can calculate some explicit formulas. For example, from \cite[Eqs. (3.73),(3.74) and (3.83)]{XuZhao-2022}, we have
\begin{align*}
&S_{2,2}(y)=\sum_{n=1}^\infty \frac{n^{2}}{\sinh^2(ny)}=\frac{x(1-x)z^2}{24}\big[z^2-4(1-2x)zz'-12x(1-x)(z')^2\big],\\
&S_{4,2}(y)=\sum_{n=1}^\infty \frac{n^{4}}{\sinh^2(ny)}=\frac{x(1-x)z^5}{120}\big[4(1-x+x^2)z'+(2x-1)z\big],\\
&C'_{2,2}(y)=\sum_{n=1}^\infty \frac{(2n-1)^{2}}{\cosh^2((2n-1)y/2)}=\frac{x(1-x)z^3}{3} \big[z-(1-2x)z'\big].
\end{align*}
Hence,
\begin{align*}
&\begin{aligned}
&\sum_{n=1}^\infty \frac{n^{3}\cosh(ny)}{\sinh^3(ny)}=-\frac1{2y'}\frac{d}{dx}\sum_{n=1}^\infty \frac{n^{2}}{\sinh^2(ny)}\\
&\qquad =\frac{(x^2-x)z^3}{48}\left\{\begin{array}{l} 24(x^2-x)^2(z')^3+4z^2\left[(x-3x^2+2x^3)z''+(7x^2-7x+1)z'\right]\\ +12(x^2-x)zz'\left[2(x^2-x)z''+(6x-3)z'\right]+(2x-1)z^3 \end{array}\right\},
\end{aligned}\\
&\begin{aligned}
&\sum_{n=1}^\infty \frac{n^{5}\cosh(ny)}{\sinh^3(ny)}=-\frac1{2y'}\frac{d}{dx}\sum_{n=1}^\infty \frac{n^{4}}{\sinh^2(ny)}\\
&\qquad=\frac{(x-x^2)z^6}{240}\left\{\begin{array}{l} 4(x-2x^2+2x^3-x^4)zz''+2(2-11x+21x^2-14x^3)zz'\\ +20(x-2x^2+2x^3-x^4)z'+(6x-6x^2-1)z^2 \end{array}\right\},
\end{aligned}\\
&\begin{aligned}
&\sum_{n=1}^\infty \frac{(2n-1)^{3}\sinh((2n-1)y/2)}{\cosh^3((2n-1)y/2))}=-\frac1{y'}\frac{d}{dx}\sum_{n=1}^\infty \frac{(2n-1)^{2}}{\cosh^2((2n-1)y/2)}\\
&\qquad=\frac{(x-x^2)z^4}{3}\left\{\begin{array}{l} 3(3x^2-x-2x^3)(z')^2+(1-2x)z^2\\+z\left[(3x^2-2x^3-x)z''+(10x-10x^2-1)z'\right]\end{array}\right\}.
\end{aligned}
\end{align*}
Setting $x=1/2$ yields
\begin{align}
&\sum_{n=1}^\infty \frac{n^{3}\cosh(n\pi)}{\sinh^3(n\pi)}=\frac{\Gamma^8(1/4)}{2^{10}\pi^7}-\frac1{16\pi^3}, \label{equ:chsh3specialCasem=1}\\
&\sum_{n=1}^\infty \frac{n^{5}\cosh(n\pi)}{\sinh^3(n\pi)}=\frac{\Gamma^{16}(1/4)}{3\cdot 2^{16}\pi^{12}} +\frac{\Gamma^8(1/4)}{2^{10}\pi^8}, \label{equ:chsh3Casem=2}\\
&\sum_{n=1}^\infty \frac{(2n-1)^{3}\sinh((2n-1)\pi/2)}{\cosh^3((2n-1)\pi/2))}=\frac{\Gamma^8(1/4)}{64\pi^7}.\nonumber
\end{align}

We can also obtain more similar evaluations with higher powers of $n$ or $2n-1$ on the numerators.

\section{Proof of Theorem \ref{thm-main1} and first part of Theorem \ref{thm-main34}}
We now apply the results from previous sections to prove Theorem \ref{thm-main1}. The crucial ones are the
properties of the $q$-series $\Phi_{1,m}$ listed in Prop.~\ref{pro:RamanujanPhi1}.

\begin{thm} \label{thm-main1Precise}
For any positive integer $m$,
\begin{align}
\sum_{n=1}^\infty \frac{n^{4m-2}}{\sinh^2(n\pi)}  &\,\in \frac{\Gamma^{8m}(1/4)}{\pi^{6m}}\Q+\frac{\delta_{m,1}}{\pi^2}\Q, \label{equ:sh2odd}\\
\sum_{n=1}^\infty \frac{n^{4m-4}}{\sinh^2(n\pi)}  &\,\in \frac{\Gamma^{8m-8}(1/4)}{\pi^{6m-5}}\Q+\delta_{m,1}\Q, \label{equ:sh2even}\\
\sum_{n=1}^\infty \frac{n^{4m-1}\cosh(n\pi)}{\sinh^3(n\pi)}  &\,\in \frac{\Gamma^{8m}(1/4)}{\pi^{6m+1}}\Q +\frac{\delta_{m,1}}{\pi^3}\Q, \label{equ:chsh3odd}\\
\sum_{n=1}^\infty \frac{n^{4m-3}\cosh(n\pi)}{\sinh^3(n\pi)}  &\,\in \frac{\Gamma^{8m}(1/4)}{\pi^{6m}}\Q
+\frac{\Gamma^{8m-8}(1/4)}{\pi^{6m-4}}\Q. \label{equ:chsh3even}
\end{align}
\end{thm}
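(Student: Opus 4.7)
My plan is to express each of the four hyperbolic sums as a $q$-series $\Phi_{1,k}$ or a derivative thereof, invoke Proposition \ref{pro:RamanujanPhi1}, and specialize at $x=1/2$ using \eqref{special-case-x}. With $q = e^{-y}$ the expansion $1/\sinh^2(ny) = 4q^{2n}/(1-q^{2n})^2$ gives
\[
\sum_{n\ge 1}\frac{n^k}{\sinh^2(ny)} = 4\Phi_{1,k},
\]
and differentiating in $y$ together with $dx/dy = -\gs z^2$ from \eqref{Bruce-diff-xyz} (where $\gs := x(1-x)$) yields
\[
\sum_{n\ge 1}\frac{n^{k+1}\cosh(ny)}{\sinh^3(ny)} = 2\gs z^2\,\Phi'_{1,k}.
\]
Thus the four sums in Theorem \ref{thm-main1Precise} correspond to $k \in \{4m-2,\, 4m-4\}$ evaluated at $x = 1/2$.

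For \eqref{equ:sh2odd} and \eqref{equ:chsh3odd} I would insert the descriptions \eqref{equ:Phi12}--\eqref{equ:Phi'12} (when $m=1$) or \eqref{equ:Phi}--\eqref{equ:Phi'4m+2} (when $m\ge 2$). The pivotal observation is that $\gs'(1/2) = 1 - 2x|_{x=1/2} = 0$, so every monomial carrying a $\gs'$ factor vanishes; and since $z(1/2) = z''(1/2)$ by \eqref{special-case-x}, the surviving pieces take the shape $z^a (z')^b$ with rational coefficients. Substituting $z(1/2) = \Gamma^2(1/4)/(2\pi^{3/2})$ and $z'(1/2) = 4\sqrt{\pi}/\Gamma^2(1/4)$, each such monomial equals a rational multiple of $\Gamma^{2(a-b)}(1/4)/\pi^{(3a-b)/2}$, and a weight count matches it to the claimed $\Q$-subspace. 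The $\delta_{m,1}/\pi^2$ term of \eqref{equ:sh2odd} and the $\delta_{m,1}/\pi^3$ term of \eqref{equ:chsh3odd} originate, respectively, from the sporadic monomials $z^2(z')^2$ in \eqref{equ:Phi12} and $z(z')^3$ in \eqref{equ:Phi'12}, which appear only in the $m=1$ descriptions of Proposition \ref{pro:RamanujanPhi1}.

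For \eqref{equ:sh2even} and \eqref{equ:chsh3even} with $m\ge 2$, the same procedure goes through with $k = 4m-4$ and \eqref{equ:Phi}--\eqref{equ:Phi'4m}. The only case outside Proposition \ref{pro:RamanujanPhi1} is $m=1$, where $k=0$ and we need $\Phi_{1,0}$. For this I would exploit the arithmetic coincidence $\Phi_{1,0} = \Phi_{0,1}$ (both collapse to $\sum_{N\ge 1}\sigma_1(N) q^{2N}$), which combined with Proposition \ref{pro:RamanujanS} gives
\[
\Phi_{1,0} = \Phi_{0,1} = S_1 + \frac{1}{24} = \frac{1-P}{24},
\qquad P = (1-2x)z^2 + 6\gs zz'.
\]
Substituting $P(1/2) = 3/\pi$ recovers $\sum_n 1/\sinh^2(n\pi) = 1/6 - 1/(2\pi)$, verifying \eqref{equ:sh2even} at $m=1$ with the $1/6$ furnishing the $\delta_{m,1}\Q$ term. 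Differentiating $P$ in $x$ places $\Phi'_{1,0}$ in the same $z, z', z''$-framework, and evaluating $2\gs z^2 \Phi'_{1,0}$ at $x=1/2$ yields the $m=1$ instance of \eqref{equ:chsh3even}.

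The main obstacle is the $\Phi_{1,0}$ case, which lies outside Proposition \ref{pro:RamanujanPhi1} and must be accessed through the auxiliary identity $\Phi_{1,0} = (1-P)/24$. The remainder is methodical bookkeeping: identifying the surviving monomials after $\gs'(1/2) = 0$, substituting \eqref{special-case-x}, and verifying that the resulting powers of $\Gamma(1/4)$ and $\pi$ fit into the predicted subspaces. The sporadic $\delta_{m,1}$ contributions deserve particular attention, as they must be traced to monomials that appear only in the $m=1$ descriptions of $\Phi_{1,2}$ and $\Phi'_{1,2}$.
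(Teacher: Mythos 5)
Your proposal is correct and follows essentially the same route as the paper: reduce each sum to $4\Phi_{1,k}$ or $2x(1-x)z^2\Phi'_{1,k}$, invoke Proposition \ref{pro:RamanujanPhi1}, and specialize at $x=1/2$ where $\gs'=0$ kills the extra monomials and $z''=z$, with the $\delta_{m,1}$ terms traced to the sporadic monomials $z^2(z')^2$ and $z(z')^3$ exactly as in the paper. The only (immaterial) deviation is the $m=1$ edge case of \eqref{equ:sh2even} and \eqref{equ:chsh3even}: you use $\Phi_{1,0}=\Phi_{0,1}=(1-P)/24$ and differentiate $P$ in $x$, whereas the paper invokes Zucker's identity $\sum_n 1/\sinh^2(ny)=4\sum_n n/(e^{2ny}-1)$ (the same fact) and differentiates in $y$ to reduce to the $m=1$ case of \eqref{equ:sh2odd}; both yield $\tfrac16-\tfrac1{2\pi}$ and $\tfrac{\Gamma^8(1/4)}{3\cdot 2^9\pi^6}-\tfrac1{8\pi^2}$.
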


\begin{proof} From the last equation of \cite[Table 1(i)]{Z1979}, setting $s=1$ and replacing $\pi c$ by $y$ we get
\begin{align}\label{Eq-y-sinh}
\sum_{n=1}^\infty \frac{1}{\sinh^2(ny)}=4\sum_{n=1}^\infty \frac{n}{e^{2ny}-1}.
\end{align}
Setting $y=\pi$ yields the well-known identity (see \cite[Eq. (20)]{Z1979})
\begin{align}
\sum_{n=1}^\infty \frac{1}{\sinh^2(n\pi)}=\frac1{6}-\frac{1}{2\pi}.
\end{align}
This settles the $m=1$ case of \eqref{equ:sh2even}. The $m=1$ case of \eqref{equ:sh2odd}
is given by the first entry in \cite[Table 4.1]{XuZhao-2022} (denoted by $S_{2,2}(\pi)$ in \cite[Conjecture 4.1]{XuZhao-2022}). We
will also calculate it by \eqref{equ:provesh2oddm=1} in Examples \ref{exa:sinhEg}.
The special case of \eqref{equ:chsh3odd} when $m=1$ is given by \eqref{equ:chsh3specialCasem=1}.
This can also be proved by using \eqref{equ:Phi'12} as is done in Examples \ref{exa:sinhEg}.

As the last special case we also need to prove \eqref{equ:chsh3even} when $m=1$.
Differentiating \eqref{Eq-y-sinh} with respect to $y$ we have
\begin{align}\label{Eq-d-y-sinh}
\sum_{n=1}^\infty \frac{n\cosh(ny)}{\sinh^3(ny)}=4\sum_{n=1}^\infty \frac{n^2 e^{2ny}}{(e^{2ny}-1)^2}=\sum_{n=1}^\infty \frac{n^2}{\sinh^2(ny)}.
\end{align}
Hence, setting $y=\pi$ and using the $m=1$ case of \eqref{equ:sh2odd} just shown above we get
\begin{align*}
\sum_{n=1}^\infty \frac{n\cosh(n\pi)}{\sinh^3(n\pi)}=  \frac{\Gamma^8(1/4)}{3\cdot 2^9\pi^6}-\frac1{8\pi^2}.
\end{align*}

For general cases, by Prop.~\ref{pro:RamanujanPhi1} we see that for all $m\ge1$
\begin{align*}
\sum_{n=1}^\infty \frac{n^{4m+2}}{\sinh^2(ny)}  &\,=4 \Phi_{1,4m+2}\in z^{4m+4}\Q[x],\\
\sum_{n=1}^\infty \frac{n^{4m}}{\sinh^2(ny)}  &\,=4\Phi_{1,4m}\in z^{4m+1}z'\Q[x],\\
\sum_{n=1}^\infty \frac{n^{4m+3}\cosh(ny)}{\sinh^3(n\pi)}  &\,
   =2 x(1-x)z^2 \Phi'_{1,4m+2}\in z^{4m+5}z'\Q[x], \\
\sum_{n=1}^\infty \frac{n^{4m+1}\cosh(ny)}{\sinh^3(ny)}  &\,=2 x(1-x)z^2 \Phi'_{1,4m}  \in z^{4m+4}\Q[x]+z^{4m+2}(z')^2\Q[x]+z^{4m+3}z''\Q[x],
\end{align*}
by \eqref{Bruce-diff-xyz}. Thus the theorem follows immediately by taking $x=1/2$ and using \eqref{special-case-x}.
\end{proof}

\begin{exa} \label{exa:sinhEg}
According to the theorem for all $m\ge 1$ we can put
\begin{align*}
\sum_{n=1}^\infty \frac{n^{4m-2}}{\sinh^2(n\pi)}= &\,\alpha_{4m-2}\frac{\Gamma^{8m}(1/4)}{\pi^{6m}}-\frac{\delta_{m,1}}{8\pi^2},\\
\sum_{n=1}^\infty \frac{n^{4m}}{\sinh^2(n\pi)}=&\,\alpha_{4m}\frac{\Gamma^{8m}(1/4)}{\pi^{6m+1}}, \\
\sum_{n=1}^\infty \frac{n^{4m-1}\cosh(n\pi)}{\sinh^3(n\pi)} = &\,\beta_{4m-1}\frac{\Gamma^{8m}(1/4)}{\pi^{6m+1}}+\frac{\delta_{m,1}\gamma_3}{\pi^3}, \\
\sum_{n=1}^\infty \frac{n^{4m+1}\cosh(n\pi)}{\sinh^3(n\pi)} = &\,\beta_{4m+1}\frac{\Gamma^{8m+8}(1/4)}{\pi^{6m+6}}  +\gamma_{4m+1}\frac{\Gamma^{8m}(1/4)}{\pi^{6m+2}}.
\end{align*}
The following table provides the explicit expression of $\Phi_{1,2m}$ modulo $R\Q[P,Q,R]$
for small values of $m$, extending those contained in \cite[Table II]{Rama1916}.
Using the values of $P,Q$ in \eqref{3.583} and \eqref{3.55} and setting $x=1/2$ we get the third and last row.
\begin{center}
\begin{tabular}{|c|c|c|c|c|c|c|c|c|c|c| } \hline
 $k$         &  2 & 4  & 6   & 8 &  10  & 12 & 14   & 16 & 18  & 20    \\ \hline
$\phantom{\frac11}\hskip-4pt 4\Phi_{1,k}$ & $\tfrac{Q-P^2}{72}$ & $\tfrac{PQ}{180}$ & $\tfrac{Q^2}{252}$ & $\tfrac{PQ^2}{180}$ & $\tfrac{Q^3}{132}$ & $\tfrac{7PQ^3}{260}$ & $\tfrac{Q^4}{12}$ &  $\tfrac{539PQ^4}{1020}$ & $\tfrac{203Q^5}{76}$ & $\tfrac{539PQ^5}{20}$   \\  \hline
 $\phantom{\frac11}\hskip-4pt x=\tfrac12$ & $\tfrac{z^4 - 3z^2(z')^2}{96} $ & $\tfrac{z^5z'}{160}$ & $\tfrac{z^8}{448}$ & $\tfrac{3z^9z'}{640}$ & $\tfrac{9z^{12}}{2816}$ & $\tfrac{567z^{13}z'}{33280}$ & $\tfrac{27z^{16}}{1024}$ & $\tfrac{43659z^{17}z'}{174080}$ & $\tfrac{49329z^{20}}{77824}$ & $\tfrac{392931z^{21}z'}{40960}$\\  \hline
$\phantom{\frac11}\hskip-4pt \alpha_{k}$ & $\tfrac{1}{3\cdot2^9} $ & $\tfrac{1}{5\cdot2^8}$ & $\tfrac{1}{7\cdot2^{14}}$ & $\tfrac{3}{5\cdot2^{14}}$ & $\tfrac{3^2}{11\cdot2^{20}}$ & $\tfrac{7\cdot3^4}{5\cdot13\cdot2^{20}}$ & $\tfrac{3^3}{2^{26}}$ & $\tfrac{11\cdot3^4\cdot7^2}{5\cdot17\cdot2^{26}}$ & $\tfrac{7\cdot29\cdot3^5}{19\cdot2^{32}}$ & $\tfrac{11\cdot3^6\cdot7^2}{5\cdot2^{32}}$\\  \hline
\end{tabular}
\end{center}

In particular, if $k=2$ then by setting $x=1/2$ we see that
\begin{equation}\label{equ:provesh2oddm=1}
    4\Phi_{1,2}= \frac{\Gamma^8(1/4)}{3\cdot2^9} - \frac{1}{8\pi^2}.
\end{equation}
This proves the $m=1$ case of \eqref{equ:sh2odd}.
We may write down the $k=4,6,8,10$ cases explicitly below:
\begin{alignat*}{3}
\sum_{n=1}^\infty \frac{n^{4}}{\sinh^2(n\pi)}=&\,\frac{\Gamma^{8}(1/4)}{5\cdot2^8\pi^{7}}, \quad &
\sum_{n=1}^\infty \frac{n^{6}}{\sinh^2(n\pi)}= &\,\frac{\Gamma^{16}(1/4)}{7\cdot2^{14}\pi^{12}},\\
\sum_{n=1}^\infty \frac{n^{8}}{\sinh^2(n\pi)}=&\,\frac{3\Gamma^{16}(1/4)}{5\cdot2^{14}\pi^{13}}, \quad &
\sum_{n=1}^\infty \frac{n^{10}}{\sinh^2(n\pi)}= &\,\frac{9\Gamma^{24}(1/4)}{11\cdot2^{20}\pi^{18}}.
\end{alignat*}

Similarly, setting $x=1/2$ in \eqref{equ:Phi'4m} and \eqref{equ:Phi'4m+2}
we arrive at the following table.
\begin{center}
\begin{tabular}{|c|c|c|c|c|c|c|c|c|c|c|c| } \hline
 $k$      &  3 & 5 & 7  & 9 &  11  & 13 & 15   & 17 & 19  & 21 & 23   \\ \hline
$\phantom{\frac11}\hskip-4pt \beta_{k}$ & $\tfrac{1}{2^{10}}$ & $\tfrac{1}{3\cdot 2^{16}}$ & $\tfrac{ 1}{2^{15}}$ & $\tfrac{ 1}{2^{22}}$ & $\tfrac{ 3^2}{2^{21}}$ & $\tfrac{ 19\cdot 3^2}{7\cdot 2^{28}}$ & $\tfrac{ 5\cdot 3^4}{2^{27}}$ & $\tfrac{ 67\cdot 3^3}{2^{34}}$ & $\tfrac{ 7\cdot 3^5\cdot 29}{2^{33}}$ & $\tfrac{ 15629\cdot 3^5}{11\cdot 2^{40}}$ & $\tfrac{ 389\cdot 3^6\cdot 7^2}{2^{39}}$
 \\ \hline
$\phantom{\frac11}\hskip-4pt\gamma_{k}$ & $-\tfrac{1}{2^{4}}$ & $\tfrac{1}{2^{10}}$ & $0$ & $\tfrac{ 3^3}{5\cdot 2^{16}}$ & $0$ & $\tfrac{ 7\cdot 3^4}{5\cdot 2^{22}}$ & $0$ & $\tfrac{ 11\cdot 7^2\cdot 3^4}{5\cdot 2^{28}}$ & $0$ & $\tfrac{ 11\cdot 7^3\cdot 3^7}{5\cdot 2^{34}}$ & $0$
 \\ \hline
\end{tabular}
\end{center}

In particular,  $k=3$ implies the special case $m=1$ in \eqref{equ:chsh3odd}.
We may write down the $k=5,7,9,11$ cases explicitly below.
\begin{alignat*}{3}
&\sum_{n=1}^\infty \frac{n^{5}\cosh(n\pi)}{\sinh^3(n\pi)}=\frac{\Gamma^{16}(1/4)}{3\cdot 2^{16}\pi^{12}}+\frac{\Gamma^8(1/4)}{2^{10}\pi^8}, \quad&
&\sum_{n=1}^\infty \frac{n^{7}\cosh(n\pi)}{\sinh^3(n\pi)}=\frac{\Gamma^{16}(1/4)}{2^{15}\pi^{13}},\\
&\sum_{n=1}^\infty \frac{n^{9}\cosh(n\pi)}{\sinh^3(n\pi)}=\frac{\Gamma^{24}(1/4)}{2^{22}\pi^{18}}+\frac{27\Gamma^{16}(1/4)}{5\cdot 2^{16}\pi^{14}}, \quad &
&\sum_{n=1}^\infty \frac{n^{11}\cosh(n\pi)}{\sinh^3(n\pi)}=\frac{9\Gamma^{24}(1/4)}{2^{21}\pi^{19}}.
\end{alignat*}
Note that the $k=5$ case agrees with \eqref{equ:chsh3Casem=2}.
\end{exa}

We now prove the evaluation of the first integral in Theorem \ref{thm-main34}.

\begin{thm}  \label{thm-main3Precise}
For any positive integer $p$,
\begin{align}\label{equ:conj-for-a2}
&\int_0^\infty \frac{x^{4p+1}}{(\cos x-\cosh x)^2}dx=i_p\frac{\Gamma^{8p}(1/4)}{\pi^{2p}}+j_p\frac{\Gamma^{8p+8}(1/4)}{\pi^{2p+4}}
\end{align}
for some $i_p,j_p\in\Q$.
\end{thm}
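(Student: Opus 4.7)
The plan is to reduce the integral directly to a pair of hyperbolic sums already handled in Theorem \ref{thm-main1Precise}. Specifically, Corollary \ref{cor-Int-cos-h-2} gives
\[
\int_0^\infty \frac{x^{4p+1}dx}{(\cos x-\cosh x)^2}
=(-1)^p 2^{2p-1}\pi^{4p+1}\left[(4p+1)\sum_{n=1}^\infty \frac{n^{4p}}{\sinh^2(n\pi)} -2\pi \sum_{n=1}^\infty \frac{n^{4p+1}\cosh(n\pi)}{\sinh^3(n\pi)}\right],
\]
so everything reduces to showing the two series on the right lie in suitable rational combinations of powers of $\Gamma(1/4)$ and $\pi$.

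Next I would apply Theorem \ref{thm-main1Precise} with $m=p+1$. From \eqref{equ:sh2even} (with $\delta_{p+1,1}=0$ since $p\ge 1$) the first sum lies in $\frac{\Gamma^{8p}(1/4)}{\pi^{6p+1}}\Q$, while from \eqref{equ:chsh3even} the second sum lies in $\frac{\Gamma^{8p+8}(1/4)}{\pi^{6p+6}}\Q+\frac{\Gamma^{8p}(1/4)}{\pi^{6p+2}}\Q$. Substituting these memberships back, the first bracket contributes a rational multiple of
\[
\pi^{4p+1}\cdot \frac{\Gamma^{8p}(1/4)}{\pi^{6p+1}}=\frac{\Gamma^{8p}(1/4)}{\pi^{2p}},
\]
while the second bracket (carrying the extra factor $-2\pi$) contributes rational multiples of
\[
\pi^{4p+2}\cdot \frac{\Gamma^{8p+8}(1/4)}{\pi^{6p+6}}=\frac{\Gamma^{8p+8}(1/4)}{\pi^{2p+4}}\quad\text{and}\quad \pi^{4p+2}\cdot \frac{\Gamma^{8p}(1/4)}{\pi^{6p+2}}=\frac{\Gamma^{8p}(1/4)}{\pi^{2p}}.
\]
Collecting terms by $\Gamma$-weight produces the desired expression with $i_p,j_p\in\Q$.

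There is essentially no hidden obstacle: once Corollary \ref{cor-Int-cos-h-2} and Theorem \ref{thm-main1Precise} are in hand, the proof is an exponent-tracking exercise. The only mild subtlety is confirming that the $\delta_{m,1}$ exceptional term in \eqref{equ:sh2even} does not appear (it vanishes because $p\ge 1$ forces $m=p+1\ge 2$), so no stray term of the form $\pi^{4p+2}\cdot\Q$ contaminates the output. The explicit values of $i_p$ and $j_p$ can be read off in closed form by combining the rational constants $\alpha_{4p}$, $\beta_{4p+1}$, and $\gamma_{4p+1}$ displayed in Examples \ref{exa:sinhEg}; for example, $j_p=-(-1)^p 2^{2p}\beta_{4p+1}$ and $i_p=(-1)^p 2^{2p-1}\bigl[(4p+1)\alpha_{4p}-2\gamma_{4p+1}\bigr]$, so the constants are fully computable for any fixed $p$.
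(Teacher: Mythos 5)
Your proposal is correct and is exactly the paper's argument: the paper proves Theorem \ref{thm-main3Precise} by citing Corollary \ref{cor-Int-cos-h-2} together with Theorem \ref{thm-main1Precise} (applied, as you do, with $m=p+1$ so that the $\delta_{m,1}$ terms vanish), and your exponent bookkeeping and the closed forms $i_p=(-1)^p2^{2p-1}\bigl[(4p+1)\alpha_{4p}-2\gamma_{4p+1}\bigr]$, $j_p=-(-1)^p2^{2p}\beta_{4p+1}$ check out against Example \ref{exa-Integral-} (e.g.\ $p=1$ gives $i_1=-1/2^8$, $j_1=1/(3\cdot 2^{14})$). You have simply spelled out the details the paper compresses into one line.
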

\begin{proof}
This follows from Cor. \ref{cor-Int-cos-h-2} and Theorem \ref{thm-main1Precise} easily.
\end{proof}

\begin{exa} \label{exa-Integral-}
By computing more cases in Theorem \ref{thm-main1Precise} we can obtain the following evaluations:
\begin{align*}
&\int_0^\infty \frac{x^{5}}{(\cos x-\cosh x)^2}dx=-\frac{\Gamma^{8}(1/4)}{2^{8}\pi^{2}}+\frac{\Gamma^{16}(1/4)}{3\cdot 2^{14}\pi^{6}},\\
&\int_0^\infty \frac{x^{9}}{(\cos x-\cosh x)^2}dx=\frac{27\Gamma^{16}(1/4)}{5\cdot 2^{12}\pi^{4}}-\frac{\Gamma^{24}(1/4)}{2^{18}\pi^{8}},\\
&\int_0^\infty \frac{x^{13}}{(\cos x-\cosh x)^2}dx=-\frac{567\Gamma^{24}(1/4)}{5\cdot 2^{16}\pi^{6}}+\frac{171\Gamma^{32}(1/4)}{7\cdot 2^{22}\pi^{10}},\\
&\int_0^\infty \frac{x^{17}}{(\cos x-\cosh x)^2}dx=\frac{43659 \Gamma^{32}(1/4)}{5\cdot 2^{20}\pi^{8}}-\frac{1809\Gamma^{40}(1/4)}{2^{26}\pi^{12}},\\
&\int_0^\infty \frac{x^{21}}{(\cos x-\cosh x)^2}dx=-\frac{8251551 \Gamma^{40}(1/4)}{5\cdot  2^{24} \pi ^{10}}+\frac{3797847 \Gamma^{48}(1/4)}{11\cdot  2^{30} \pi ^{14}},\\
&\int_0^\infty \frac{x^{25}}{(\cos x-\cosh x)^2}dx=\frac{8622870795\Gamma^{48}(1/4)}{13\cdot  2^{28}\pi ^{12}}-\frac{138429081 \Gamma^{56}(1/4)}{2^{34} \pi ^{16}},\\
&\int_0^\infty \frac{x^{29}}{(\cos x-\cosh x)^2}dx=-\frac{2498907956391 \Gamma^{56}(1/4)}{5\cdot 2^{32}\pi ^{14}}+\frac{104367224493\Gamma^{64}(1/4)}{2^{38}\pi^{18}}.
\end{align*}
\end{exa}

\section{Some preliminary results of Jacobi elliptic functions}
In order to prove Theorem \ref{thm-main2} and the second part of Theorem \ref{thm-main34} we need to switch gears and
turn to Jacobi elliptic functions. This large family of functions have a very long history and
many books have been devoted to its study. We will only need some of them in this paper.

The Jacobi elliptic function $\s (u)$ is defined via the inversion of the elliptic integral
\begin{align}
u=\int_0^\varphi \frac{dt}{\sqrt{1-k^2\sin^2 t}}\quad (0<k^2<1),
\end{align}
namely, $\s (u):=\sin \varphi$, where $k=\mod u$ is referred to as the elliptic modulus, and $\varphi={\rm am}(u,k)={\rm am}(u)$ is referred to as the Jacobi amplitude. The Jacobi elliptic functions $\cn u$ and $\dn u$ may be defined as follows:
\begin{align*}
&\cn (u):=\sqrt{1-\s^2 (u)}, \quad  \dn (u):=\sqrt{1-k^2\s ^2(u)}.
\end{align*}
Using the Jacobi elliptic functions $\s (u)$, $\cn (u)$ and $\dn (u)$, we give the definitions of Jacobi elliptic functions $\nc (u)$, $\dc (u)$ and $\ds (u)$, as follows:
\begin{align*}
&\nc (u):=\frac{1}{\cn (u)}, \quad \dc (u):=\frac{\dn (u)}{\cn (u)}, \quad \ds (u):=\frac{\dn (u)}{\s (u)}.
\end{align*}

We will need to use the Maclaurin (or Laurent) series of these functions. First, we consider their higher derivatives.

\begin{lem}\label{lem:nc}
For any integer $m\ge 0$ there is a polynomial $f_m(X,Y,Z)\in\Z[X,Y,Z]$ such that
\begin{equation*}
\frac{d^m}{du^m} \nc(u)=f_m(\sin \gf, \cos\gf,k^2)(1-\sin^2\gf)^{-m-1/2}\cdot
\left\{
  \begin{array}{ll}
    (1-k^2\sin^2\gf)^{1/2}, \quad\ & \hbox{if $m$ is odd;} \\
    1, & \hbox{if $m$ is even.}
  \end{array}
\right.
\end{equation*}
\end{lem}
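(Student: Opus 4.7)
The plan is to induct on $m$. The base case $m=0$ is immediate: since $\cn(u)=\sqrt{1-\sin^2\gf}$ by definition, we have $\nc(u)=(1-\sin^2\gf)^{-1/2}$, so $f_0=1$ works and the $m$ even case of the formula applies.

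For the inductive step I will use three standard identities. First, $d\gf/du=\dn(u)=\sqrt{1-k^2\sin^2\gf}$, which implies
\[
\frac{d}{du}\,g(\sin\gf,\cos\gf,k^2)=\left(\cos\gf\,\frac{\partial g}{\partial X}-\sin\gf\,\frac{\partial g}{\partial Y}\right)\sqrt{1-k^2\sin^2\gf}
\]
for any $g\in\Z[X,Y,Z]$. Second, $\frac{d}{du}(1-\sin^2\gf)^{-m-1/2}=(2m+1)\sin\gf\cos\gf\,(1-\sin^2\gf)^{-m-3/2}\sqrt{1-k^2\sin^2\gf}$. Third, $\frac{d}{du}\sqrt{1-k^2\sin^2\gf}=-k^2\sin\gf\cos\gf$, which is polynomial in $\sin\gf,\cos\gf,k^2$ and carries \emph{no} radical.

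Suppose $m$ is even and the inductive hypothesis holds. Differentiating the ansatz via the product rule, each of the two resulting terms picks up exactly one factor of $\sqrt{1-k^2\sin^2\gf}$, and the $(1-\sin^2\gf)$-exponent drops by one. After factoring out the predicted denominator $(1-\sin^2\gf)^{-m-3/2}$ and the radical, what remains is
\[
f_{m+1}=\left(\cos\gf\,\frac{\partial f_m}{\partial X}-\sin\gf\,\frac{\partial f_m}{\partial Y}\right)(1-\sin^2\gf)+(2m+1)\sin\gf\cos\gf\,f_m,
\]
which is visibly in $\Z[X,Y,Z]$ and matches the $m+1$ odd case. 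If instead $m$ is odd, the product rule produces three contributions: differentiating $f_m$ or the $(1-\sin^2\gf)^{-m-1/2}$ factor creates a second radical which pairs with the existing one to form the polynomial $1-k^2\sin^2\gf$, while differentiating the radical itself strips it entirely and produces $-k^2\sin\gf\cos\gf$. Collecting over $(1-\sin^2\gf)^{-m-3/2}$ again gives an explicit element of $\Z[X,Y,Z]$, matching the $m+1$ even case.

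There is no serious obstacle: the parity of $m$ governs whether a radical appears, and each differentiation toggles that parity, so the two cases interlock automatically. The same inductive recursion should dispatch $\dc(u)$ and $\ds(u)$ in the sequel with only minor changes in the final polynomial form, which will presumably be needed for the subsequent Maclaurin expansion comparisons used in the proof of Theorem~\ref{thm-main2}.
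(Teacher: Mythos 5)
Your proposal is correct and follows essentially the same route as the paper's own proof: induction on $m$, the chain rule via $d\gf/du=\sqrt{1-k^2\sin^2\gf}$, and the parity split in which differentiating the existing radical strips it (yielding the polynomial $-k^2\sin\gf\cos\gf$) while the other product-rule terms pair two radicals into $1-k^2\sin^2\gf$. Your explicit recursion for $f_{m+1}$ in the even-$m$ case matches the paper's formula exactly, and your verbal account of the odd-$m$ case reproduces its three-term computation.
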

\begin{proof} We proceed by induction on $m$. By definition
\begin{equation*}
\nc(u)=(1-\sin^2\gf)^{-1/2}
\end{equation*}
so we get $f_0(X,Y,Z)=1$. By the chain rule
\begin{equation*}
\frac{d}{du} \nc(u)=\sin \gf\cos \gf (1-\sin^2\gf)^{-3/2}\frac{d\gf}{du}
    =\sin \gf\cos \gf (1-\sin^2\gf)^{-3/2} (1-k^2\sin^2\gf)^{1/2}.
\end{equation*}
Thus $f_1(X,Y,Z)=XY$. Suppose the lemma holds for $m$, namely, $f_m(X,Y,Z)\in\Z[X,Y,Z]$ for some
$m\ge 1$. If $m+1>1$ is even then
\begin{align*}
\frac{d^{m+1}}{du^{m+1}} \nc(u) =&\,\frac{d}{d\gf}\Big(f_m(\sin \gf, \cos\gf,k^2)
    (1-\sin^2\gf)^{-m-1/2} (1-k^2\sin^2\gf)^{1/2}\Big)\cdot (1-k^2\sin^2\gf)^{1/2} \\
=&\, \frac{d}{d\gf}\Big(f_m(\sin \gf, \cos\gf,k^2)\Big)(1-\sin^2\gf)^{-m-1/2}(1-k^2\sin^2\gf)  \\
&\, +(2m+1) f_m(\sin \gf, \cos\gf,k^2)\sin\gf\cos\gf(1-\sin^2\gf)^{-m-3/2} (1-k^2\sin^2\gf) \\
&\, -f_m(\sin \gf, \cos\gf,k^2)(1-\sin^2\gf)^{-m-1/2} (k^2\sin \gf\cos\gf).
\end{align*}
Hence
\begin{align*}
f_{m+1}(X,Y,Z)=&\,\left(Y\frac{\partial}{\partial X}f_m(X,Y,Z)-X\frac{\partial}{\partial Y}f_m(X,Y,Z)\right)(1-X^2)(1-X^2Z) \\
&\,+XY\Big((2m+1)(1-X^2Z) - Z(1-X^2)\Big) f_m(X,Y,Z)\in\Z[X,Y,Z]
\end{align*}
by induction assumption. If $m+1>1$ is odd then
\begin{align*}
\frac{d^{m+1}}{du^{m+1}} \nc(u) =&\,\frac{d}{d\gf}\Big(f_m(\sin \gf, \cos\gf,k^2)
    (1-\sin^2\gf)^{-m-1/2} \Big)\cdot (1-k^2\sin^2\gf)^{1/2} \\
=&\, \left\{\frac{d}{d\gf}\Big(f_m(\sin \gf, \cos\gf,k^2)\Big)(1-\sin^2\gf)^{-m-1/2} \right.  \\
&\,\left. +(2m+1) f_m(\sin \gf, \cos\gf,k^2)\sin\gf\cos\gf(1-\sin^2\gf)^{-m-3/2} \right\}(1-k^2\sin^2\gf)^{1/2}
\end{align*}
Hence
\begin{align*}
f_{m+1}(X,Y,Z)=&\,\left(Y\frac{\partial}{\partial X}f_m(X,Y,Z)-X\frac{\partial}{\partial Y}f_m(X,Y,Z)\right)(1-X^2) \\
&\, +(2m+1)XY f_m(X,Y,Z) \in\Z[X,Y,Z]
\end{align*}
by induction assumption.  This completes the proof of the lemma.
\end{proof}

By Lemma \ref{lem:nc} we obtain the following corollary at once.

\begin{cor}\label{cor-nc}
The Maclaurin series expansion of $\nc(u)$ has the form
\begin{equation}\label{equ:cor-nc}
    \sum_{m\ge 0} \frac{f_m(x)}{m!}u^m, \quad\text{where } f_m(x)\in\Z[x] \ \forall m\ge 0.
\end{equation}
\end{cor}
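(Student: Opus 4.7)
The plan is to apply Lemma \ref{lem:nc} combined with Taylor's theorem at $u = 0$. The single key observation needed is that the Jacobi amplitude satisfies $\gf = \mathrm{am}(0, k) = 0$, since the defining integral $u = \int_0^\gf dt/\sqrt{1-k^2\sin^2 t}$ vanishes precisely at $\gf = 0$. Consequently $\sin\gf|_{u=0} = 0$ and $\cos\gf|_{u=0} = 1$.

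Substituting $\sin\gf = 0$ into the expression from Lemma \ref{lem:nc}, both the factor $(1-\sin^2\gf)^{-m-1/2}$ and (when $m$ is odd) the extra factor $(1-k^2\sin^2\gf)^{1/2}$ collapse to $1$. Hence for every $m \ge 0$,
\[
\frac{d^m}{du^m}\nc(u)\bigg|_{u=0} = f_m(0,\,1,\,k^2).
\]
Since $f_m(X,Y,Z) \in \Z[X,Y,Z]$ by Lemma \ref{lem:nc}, the specialization $X=0$, $Y=1$ yields a polynomial in $\Z[k^2]$. With the mild abuse of notation signaled in the statement, we define $f_m(x) := f_m(0,1,x) \in \Z[x]$. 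Since $\cn(0) = 1 \neq 0$, the function $\nc(u) = 1/\cn(u)$ is analytic in a neighborhood of $u = 0$, so Taylor's theorem produces the Maclaurin expansion \eqref{equ:cor-nc} in exactly the claimed form.

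There is essentially no obstacle here beyond the verification that $\gf(0) = 0$ and the trivial observation that the non-polynomial algebraic factors specialize to $1$ at $\sin\gf = 0$. As a consistency check one may note that, because $\nc$ is an even function of $u$, the coefficients $f_m(0,1,k^2)$ must vanish for all odd $m$ (e.g.\ $f_1(X,Y,Z) = XY$ indeed vanishes at $X=0$), which gives a useful sanity test even though it is not needed for the statement itself.
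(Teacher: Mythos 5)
Your proposal is correct and follows exactly the route the paper intends: Corollary \ref{cor-nc} is obtained from Lemma \ref{lem:nc} by evaluating at $u=0$, where $\gf=0$ makes the algebraic factors equal to $1$ and leaves $f_m(0,1,k^2)\in\Z[k^2]=\Z[x]$. You have merely spelled out the details the paper compresses into ``we obtain the following corollary at once.''
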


The proof of the following lemma is completely similar to the above.
\begin{lem}
For any integer $m\ge 0$ there is a polynomial $g_m(X,Y,Z), h_m(X,Y,Z), l_m(X,Y,Z)\in\Z[X,Y,Z]$ such that
\begin{align*}
\frac{d^m}{du^m} \s (u)=&\, g_m(\sin \gf, \cos\gf,k^2)\cdot
\left\{
  \begin{array}{ll}
    (1-k^2\sin^2\gf)^{1/2}, \quad\ & \hbox{if $m$ is odd;} \\
    1, & \hbox{if $m$ is even,}
  \end{array}
\right. \\
\frac{d^m}{du^m} \cn(u)=&\, h_m(\sin \gf, \cos\gf,k^2)(1-\sin^2\gf)^{-m+1/2}\cdot
\left\{
  \begin{array}{ll}
    1, & \hbox{if $m$ is odd;} \\
    (1-k^2\sin^2\gf)^{1/2}, \quad\ & \hbox{if $m$ is even.}
  \end{array}
\right.\\
\frac{d^m}{du^m} \dn(u)=&\, l_m(\sin \gf, \cos\gf,k^2)\cdot
\left\{
  \begin{array}{ll}
    1, & \hbox{if $m$ is odd;} \\
    (1-k^2\sin^2\gf)^{1/2}, \quad\ & \hbox{if $m$ is even.}
  \end{array}
\right.
\end{align*}
\end{lem}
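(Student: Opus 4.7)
The plan is to prove all three formulas by simultaneous induction on $m$, closely mirroring the template set by the proof of Lemma~\ref{lem:nc}. The base cases $m=0$ dispose of themselves: since $\sn(u)=\sin\gf$, $\cn(u)=\cos\gf=(1-\sin^2\gf)^{1/2}$, and $\dn(u)=(1-k^2\sin^2\gf)^{1/2}$, one simply reads off $g_0(X,Y,Z)=X$, $h_0(X,Y,Z)=1$, and $l_0(X,Y,Z)=1$, with the extra factors on the right-hand side accounting for the values. The key analytic input for the inductive step is the chain rule in the form $\frac{d}{du}=\dn\cdot\frac{d}{d\gf}=(1-k^2\sin^2\gf)^{1/2}\frac{d}{d\gf}$, together with $\frac{d\dn}{d\gf}=-k^2\sin\gf\cos\gf/\dn$, so that pairing an outer $\dn$ with an inner $\dn$ cancels all square roots using $\dn^2=1-k^2\sin^2\gf\in\Z[X,Z]$; this is exactly what produces the parity toggling of the $(1-k^2\sin^2\gf)^{1/2}$ factor.

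For $\sn$ and $\dn$ the recursion is essentially mechanical. For instance, in the $\sn$ case with $m$ even, differentiating $g_m(\sin\gf,\cos\gf,k^2)$ gives
\[
\frac{d^{m+1}}{du^{m+1}}\sn(u)=\dn\bigl(Y\partial_X g_m-X\partial_Y g_m\bigr)\big|_{X=\sin\gf,\,Y=\cos\gf},
\]
so $g_{m+1}=Y\partial_X g_m-X\partial_Y g_m\in\Z[X,Y,Z]$. When $m$ is odd we instead differentiate the product $g_m\cdot\dn$; the resulting $\dn^2$ collapses to $1-X^2Z$, and one obtains the explicit recurrence
\[
g_{m+1}=\bigl(Y\partial_X g_m-X\partial_Y g_m\bigr)(1-X^2 Z)-XYZ\,g_m\in\Z[X,Y,Z].
\]
The argument for $l_m$ is completely analogous.

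For $\cn$, which carries the extra factor $(1-\sin^2\gf)^{-m+1/2}$, I would follow the bookkeeping of Lemma~\ref{lem:nc} verbatim: differentiating $(1-\sin^2\gf)^{-m+1/2}$ produces $(2m-1)\sin\gf\cos\gf(1-\sin^2\gf)^{-m-1/2}$, shifting the exponent in the prescribed way and depositing a $\sin\gf\cos\gf$ into the polynomial part, which is harmless. Splitting the induction step into the two parity subcases (even $m$ and odd $m$), the explicit recurrence for $h_{m+1}$ becomes, up to sign conventions,
\[
h_{m+1}=\bigl(Y\partial_X h_m-X\partial_Y h_m\bigr)(1-X^2)\bigl[\cdots\bigr]+(2m-1)XY\,h_m\bigl[\cdots\bigr]
\]
where the bracketed terms are themselves in $\Z[X,Y,Z]$ by the identity $\dn^2=1-X^2Z$; this is again a direct adaptation of the $f_{m+1}$ recurrence in Lemma~\ref{lem:nc}.

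The main obstacle, and the only real difference from the $\nc$ case, is the bookkeeping in the $\cn$ induction: three structural pieces (the polynomial $h_m$, the power of $1-\sin^2\gf$, and the parity-dependent $\dn$ factor) all interact under differentiation, so one must carefully track which terms pick up an extra $\dn$ and which terms convert $\dn^2$ into $1-k^2\sin^2\gf$. Once the parity subcases are written out separately and the recurrence for $h_{m+1}$ is displayed in closed form, membership in $\Z[X,Y,Z]$ follows by the induction hypothesis exactly as in the proof of Lemma~\ref{lem:nc}, and the claims for $g_m$ and $l_m$ fall out as simpler byproducts of the same calculation.
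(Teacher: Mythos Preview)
Your proposal is correct and takes exactly the same approach as the paper: the paper's entire proof consists of the single sentence ``The proof of the following lemma is completely similar to the above,'' referring to Lemma~\ref{lem:nc}, and you have simply spelled out that similarity by carrying the induction and the $d/du=(1-k^2\sin^2\gf)^{1/2}\,d/d\gf$ bookkeeping through the three cases.
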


Using the fact the the Maclaurin series of $\s (u)$ starts with $u+O(u^2)$ we can prove
the following results immediately.

\begin{cor}\label{cor-dc+ds}
There are polynomials where $p_m(x),q_m(x)\in\Q[x]$ such that
the Maclaurin series of $\dc(u)$ has the form
\begin{equation} \label{equ:cor-dc-fe-u}
    \sum_{m\ge 0} \frac{p_m(x)}{m!} u^m
\end{equation}
and the Maclaurin series of $u\cdot \ds(u)$ has the form
\begin{equation} \label{equ:cor-ds-fe-u}
    \sum_{m\ge 0} \frac{q_m(x)}{m!} u^m.
\end{equation}
\end{cor}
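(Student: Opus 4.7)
The plan is to reduce both claims to the lemma immediately preceding the corollary, combined with Corollary \ref{cor-nc}. The key observation is that the evaluation at $u=0$ is simple because $\gf=\mathrm{am}(0)=0$, so $\sin\gf=0$, $\cos\gf=1$, and both $(1-\sin^2\gf)^{-m+1/2}$ and $(1-k^2\sin^2\gf)^{1/2}$ collapse to $1$. Substituting into the lemma shows that the $m$-th derivatives at $0$ of $\s(u)$, $\cn(u)$, and $\dn(u)$ equal $g_m(0,1,x)$, $h_m(0,1,x)$, and $l_m(0,1,x)$ respectively, each lying in $\Z[x]$. Hence each of these three functions has a Maclaurin expansion of the form $\sum_m a_m(x)u^m/m!$ with $a_m\in\Z[x]$.

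For the first identity I would write $\dc(u)=\dn(u)\cdot\nc(u)$ and take the Cauchy product of the two series. By the observation above together with Corollary \ref{cor-nc}, the $m$-th coefficient is
\[
\frac{1}{m!}\sum_{j=0}^{m}\binom{m}{j}\,l_j(0,1,x)\,f_{m-j}(x)\in\frac{1}{m!}\Z[x],
\]
which is precisely the required form with $p_m(x)\in\Z[x]\subset\Q[x]$.

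For the second identity, I would use the classical fact $\s(u)=u+O(u^3)$ to write $\s(u)=u\cdot H(u)$, where $H(0)=1$ and the Maclaurin coefficients of $H(u)$ lie in $\Q[x]$ by the first step (explicitly, the $n$-th coefficient of $H$ is $g_{n+1}(0,1,x)/(n+1)!$). The reciprocal $1/H(u)$ is then analytic at $u=0$, and its Maclaurin coefficients remain in $\Q[x]$: they are determined recursively by the triangular linear system coming from $H(u)\cdot\bigl(1/H(u)\bigr)=1$, whose leading coefficient $H(0)=1$ is a pure number, so no $x$-dependent denominators are ever introduced. Then $u\cdot\ds(u)=\dn(u)\cdot\bigl(u/\s(u)\bigr)=\dn(u)/H(u)$ is a product of two series with coefficients in $\Q[x]$, and multiplying the $m$-th coefficient by $m!$ produces the required $q_m(x)\in\Q[x]$.

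The only minor subtlety — not really an obstacle — is justifying that the inversion of $H(u)$ stays inside $\Q[x]$; this is fully handled by $H(0)=1$ being independent of $x$, so the recursion for the coefficients of $1/H$ never requires dividing by a non-constant polynomial.
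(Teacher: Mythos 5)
Your proposal is correct and follows essentially the same route as the paper: the paper deduces the corollary from the preceding lemma on the derivatives of $\s$, $\cn$, $\dn$ (together with Corollary \ref{cor-nc}) and the single observation that $\s(u)=u+O(u^2)$ with leading coefficient $1$, which is exactly the point you elaborate when inverting $H(u)$. Your write-up simply fills in the details the paper leaves implicit, and all the steps check out.
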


\begin{re}
See \cite{MMYZ2021} and the references therein for recent results on Jacobian elliptic functions.
\end{re}

\section{Evaluations of hyperbolic summations via Jacobi functions}
In this section, we will apply the Fourier series expansions and power series expansions of Jacobi elliptic functions ${\rm dc},{\rm nc}$ and $\ds$ to establish some explicit evaluations of hyperbolic summations.

To begin with, we record the Fourier series expansions of Jacobi elliptic functions $\nc (u)$, $\dc (u)$ and $\ds (u)$
\begin{align}
&\dc(u)=\frac{\pi}{2K}\sec\Big(\frac{\pi u}{2K}\Big)+\frac{2\pi}{K} \sum_{n=0}^\infty \frac{(-1)^n q^{2n+1}\cos\Big((2n+1)\frac{\pi u}{2K}\Big)}{1-q^{2n+1}},\label{fe-dc}\\
&\nc(u)=\frac{\pi}{2K\sqrt{1-k^2}}\sec\Big(\frac{\pi u}{2K}\Big)-\frac{2\pi}{K\sqrt{1-k^2}} \sum_{n=0}^\infty \frac{(-1)^n q^{2n+1}\cos\Big((2n+1)\frac{\pi u}{2K}\Big)}{1+q^{2n+1}},\label{fe-nc}\\
&\ds(u)=\frac{\pi}{2K}\csc\Big(\frac{\pi u}{2K}\Big)-\frac{2\pi}{K} \sum_{n=0}^\infty \frac{ q^{2n+1}\sin\Big((2n+1)\frac{\pi u}{2K}\Big)}{1+q^{2n+1}},\label{fe-ds}
\end{align}
given in \cite{DCLMRT1992} and in the classical text \cite[pp. 511-512]{WW1966}.

Recall that the Euler numbers $E_m$ are defined by the generating function
\begin{equation*}
\sec(x)=\sum_{m=0}^\infty \frac{E_m}{(2m)!} x^{2m}.
\end{equation*}
In particular, $E_0=1,E_1=1,E_2=5,E_3=61,E_4=1385,\ldots$.

\begin{thm}\label{thm-dc-cases}
Let $x,y$ and $z$ be defined by \eqref{rel-den}. For any integer $m\ge 0$,
\begin{align}\label{equ:thm-dc-cases}
\sum_{n=0}^\infty \frac{(-1)^n(2n+1)^{2m}}{e^{(2n+1)y}-1}
=\frac{(-1)^m}4\Big(z^{2m+1} p_{2m}(x)-E_m\Big)  \in \mathbb{Q}[x,z],
\end{align}
where $p_n(x)$ appears in the coefficients of the Maclaurin series of $\dc(u)$ in \eqref{equ:cor-dc-fe-u}.
\end{thm}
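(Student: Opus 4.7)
\medskip
\noindent\textbf{Proof proposal.} The plan is to compute the Maclaurin expansion of $\dc(u)$ about $u=0$ in two different ways and match coefficients. On one hand, Corollary~\ref{cor-dc+ds} provides the expansion $\dc(u)=\sum_{m\ge 0} \frac{p_m(x)}{m!}u^m$ with $p_m(x)\in\Q[x]$. Since $\cn(u)$ and $\dn(u)$ are both even, so is $\dc(u)$, hence $p_m(x)=0$ for odd $m$.

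On the other hand, I would take the Fourier series \eqref{fe-dc} and Taylor expand every trigonometric factor around $u=0$. Using the generating function of the Euler numbers,
\begin{equation*}
\sec\left(\frac{\pi u}{2K}\right)=\sum_{m\ge 0}\frac{E_m}{(2m)!}\left(\frac{\pi u}{2K}\right)^{2m},\qquad
\cos\left((2n+1)\frac{\pi u}{2K}\right)=\sum_{m\ge 0}\frac{(-1)^m(2n+1)^{2m}}{(2m)!}\left(\frac{\pi u}{2K}\right)^{2m},
\end{equation*}
and recalling $z=2K/\pi$, the Fourier series becomes
\begin{equation*}
\dc(u)=\frac{1}{z}\sum_{m\ge 0}\frac{E_m}{(2m)!\,z^{2m}}u^{2m}
+\frac{4}{z}\sum_{n\ge 0}\frac{(-1)^n q^{2n+1}}{1-q^{2n+1}}\sum_{m\ge 0}\frac{(-1)^m(2n+1)^{2m}}{(2m)!\,z^{2m}}u^{2m}.
\end{equation*}
For $|q|<1$ and $u$ in a small enough neighborhood of $0$ all sums converge absolutely, so interchanging the order of summation is legal. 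Matching the coefficient of $u^{2m}$ with the one from Corollary~\ref{cor-dc+ds} gives
\begin{equation*}
p_{2m}(x)=\frac{E_m}{z^{2m+1}}+\frac{4(-1)^m}{z^{2m+1}}\sum_{n\ge 0}\frac{(-1)^n(2n+1)^{2m}q^{2n+1}}{1-q^{2n+1}}.
\end{equation*}

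Finally, using $q=e^{-y}$ we rewrite $q^{2n+1}/(1-q^{2n+1})=1/(e^{(2n+1)y}-1)$ and solve for the hyperbolic sum; this yields exactly \eqref{equ:thm-dc-cases}. The membership in $\Q[x,z]$ is immediate: $p_{2m}(x)\in\Q[x]$ by Corollary~\ref{cor-dc+ds} and $E_m\in\Z$, so $z^{2m+1}p_{2m}(x)-E_m\in\Q[x,z]$.

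I do not foresee any serious obstacle; the only things that need a word of justification are (i) the parity observation making odd-indexed $p_m$ vanish so that the formula involves only $p_{2m}$, and (ii) the absolute convergence that permits switching the two infinite sums, both of which are routine for $u$ near $0$ and $|q|<1$.
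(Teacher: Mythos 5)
Your proposal is correct and follows essentially the same route as the paper: rewrite the Fourier expansion \eqref{fe-dc} in the notation of \eqref{rel-den}, expand $\sec$ and $\cos$ into power series, and compare the coefficient of $u^{2m}$ with the Maclaurin expansion \eqref{equ:cor-dc-fe-u} from Corollary~\ref{cor-dc+ds}. The two justifications you flag (evenness of $\dc$ and absolute convergence permitting the interchange of sums) are indeed the only points needing care, and both hold as you describe.
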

\begin{proof} On the one hand, using the notations of \eqref{rel-den}, the \eqref{fe-dc} can be rewritten as
\begin{align}\label{dc-fe-psx}
\dc(u)&=\frac1{z}\sec\Big(\frac{u}{z}\Big)+\frac{4}{z}\sum_{n=0}^\infty \frac{(-1)^ne^{-(2n+1)y}\cos\Big((2n+1)\frac{u}{z}\Big)}{1-e^{-(2n+1)y}}\nonumber\\
&=\frac1{z}\sum_{m=0}^\infty \frac{E_m}{(2m)!} \Big(\frac{u}{z}\Big)^{2m}+\frac{4}{z}\sum_{n=0}^\infty \frac{(-1)^n}{e^{(2n+1)y}-1}\sum_{m=0}^\infty \frac{(-1)^m}{(2m!)}\Big((2n+1) \frac{u}{z}\Big)^{2m}\nonumber\\
&=\sum_{m=0}^\infty\left\{E_m+4 (-1)^m \sum_{n=0}^\infty \frac{(-1)^n (2n+1)^{2m}}{e^{(2n+1)y}-1} \right\} \frac{u^{2m}}{(2m)!z^{2m+1}}.
\end{align}
Thus, comparing the coefficients of $u^{2m}$ in \eqref{dc-fe-psx} and \eqref{equ:cor-dc-fe-u} yields \eqref{equ:thm-dc-cases} quickly.
\end{proof}

\begin{exa}
With the help of \emph{Mathematica}, we calculate the power series expansion of Jacobi elliptic function $\dc(u)$ as
\begin{align*}
\dc(u)&=1+\frac1{2}(1-x)u^2+\frac{1}{4!}(5-6x+x^2)u^4+\frac{1}{6!}(61-107x+47x^2-x^3)u^6\nonumber\\
&\quad+\frac1{8!}(1385-3116x+2142x^2-412x^3+x^4)u^8\nonumber\\
&\quad+\frac1{10!}(50521-138933x+130250x^2-45530x^3+3693x^4-x^5)u^{10}+\cdots.
\end{align*}
Comparing the coefficients of $u^0,u^2,u^4$ and $u^6$, we see that
\begin{align}
&\sum_{n=0}^\infty \frac{(-1)^n}{e^{(2n+1)y}-1}=\frac{z}{4}-\frac1{4},\label{eq-dc-z-1}\\
&\sum_{n=0}^\infty \frac{(-1)^n(2n+1)^2}{e^{(2n+1)y}-1}=\frac1{4}-\frac1{4}z^3(1-x),\label{eq-dc-z-2}\\
&\sum_{n=0}^\infty \frac{(-1)^n(2n+1)^4}{e^{(2n+1)y}-1}=-\frac5{4}+\frac1{4}z^5(5-6x+x^2),\label{eq-dc-z-3}\\
&\sum_{n=0}^\infty \frac{(-1)^n(2n+1)^6}{e^{(2n+1)y}-1}=\frac{61}{4}-\frac1{4}z^7(61-107x+47x^2-x^3).\label{eq-dc-z-4}
\end{align}
\end{exa}

\begin{re}
In his recent work \cite{Campbell} J.M. Campbell also applied the method based on Fourier series expansions related to Jacobi elliptic functions ${\rm dc},{\rm nc}$ to discover some new identities evaluating hyperbolic infinite sums in terms of the complete elliptic integrals $K$ and $E$.
\end{re}

\begin{thm}\label{thm-nc-cases}
Let $x,y$ and $z$ be defined by \eqref{rel-den}. For any positive integer $m\ge 0$,
\begin{align} \label{equ:nc-cases}
\sum_{n=0}^\infty \frac{(-1)^n(2n+1)^{2m}}{e^{(2n+1)y}+1}
=\frac{(-1)^m}4 \Big(E_m-\sqrt{1-x} z^{2m+1}f_{2m}(x)\Big)
\in \mathbb{Q}[\sqrt{1-x},x,z],
\end{align}
where $f_m(x)$ appears in the coefficients of the Maclaurin series of $\nc(u)$ in \eqref{cor-nc}.
\end{thm}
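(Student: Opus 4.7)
The plan is to mimic the proof of Theorem \ref{thm-dc-cases} almost verbatim, using the Fourier series expansion \eqref{fe-nc} of $\nc(u)$ in place of \eqref{fe-dc}. First I would rewrite \eqref{fe-nc} using the notation from \eqref{rel-den}: since $\pi/(2K)=1/z$, $k^2=x$, and $q^{2n+1}/(1+q^{2n+1})=1/(e^{(2n+1)y}+1)$, the expansion becomes
\begin{align*}
\nc(u)=\frac{1}{z\sqrt{1-x}}\sec\!\Big(\frac{u}{z}\Big)-\frac{4}{z\sqrt{1-x}}\sum_{n=0}^\infty \frac{(-1)^n\cos\big((2n+1)u/z\big)}{e^{(2n+1)y}+1}.
\end{align*}

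Next I would expand $\sec(u/z)=\sum_{m\ge0}E_m (u/z)^{2m}/(2m)!$ by the definition of Euler numbers and each cosine by its Maclaurin series, then swap the order of summation (justified by absolute convergence for small $|u|$, since the $q$-factors decay geometrically). This yields
\begin{align*}
\nc(u)=\sum_{m=0}^\infty \left\{E_m-4(-1)^m\sum_{n=0}^\infty \frac{(-1)^n(2n+1)^{2m}}{e^{(2n+1)y}+1}\right\}\frac{u^{2m}}{(2m)!\,\sqrt{1-x}\,z^{2m+1}}.
\end{align*}

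Then I would invoke Corollary \ref{cor-nc}, which gives the alternative expansion $\nc(u)=\sum_{m\ge 0} f_m(x) u^m/m!$ with $f_m(x)\in\Z[x]$; since $\cn(u)$ is an even function, so is $\nc(u)$, and therefore $f_{2m+1}(x)=0$. Comparing the coefficients of $u^{2m}$ in the two expressions and solving for the hyperbolic sum yields exactly
\begin{align*}
\sum_{n=0}^\infty \frac{(-1)^n(2n+1)^{2m}}{e^{(2n+1)y}+1}=\frac{(-1)^m}{4}\Big(E_m-\sqrt{1-x}\,z^{2m+1}f_{2m}(x)\Big),
\end{align*}
which proves \eqref{equ:nc-cases}. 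Since $f_{2m}(x)\in\Z[x]$, the right-hand side lies in $\Q[\sqrt{1-x},x,z]$ as claimed.

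The only real subtlety is justifying the interchange of the two summations and making sure the coefficient matching is legal; this is standard because near $u=0$ both series converge absolutely and represent the same holomorphic function. A minor bookkeeping point is the $\sqrt{1-x}=k'$ prefactor in \eqref{fe-nc}, which produces the $\sqrt{1-x}$ on the right-hand side and is the only structural difference from Theorem \ref{thm-dc-cases}. No deep new idea is required beyond what was already used for $\dc(u)$.
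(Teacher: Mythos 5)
Your proposal is correct and takes essentially the same route as the paper: rewrite the Fourier expansion \eqref{fe-nc} in the notation of \eqref{rel-den}, expand the secant and cosines into power series, and compare the coefficient of $u^{2m}$ with the Maclaurin expansion \eqref{equ:cor-nc}. The paper's proof is exactly this computation, stated tersely as being ``completely similar'' to that of Theorem \ref{thm-dc-cases}, and your intermediate display for $\nc(u)$ matches the paper's equation \eqref{nc-fe-psx} verbatim.
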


\begin{proof}
The proof is completely similar to that of Theorem \ref{thm-dc-cases}. On the one hand, applying \eqref{rel-den}, we obtain
\begin{align}\label{nc-fe-psx}
\nc(u)=\frac{1}{\sqrt{1-x}}\sum_{m=0}^\infty \left\{ E_m -4(-1)^m  \sum_{n=0}^\infty \frac{(-1)^n (2n+1)^{2m}}{e^{(2n+1)y}+1} \right\} \frac{u^{2m}}{(2m)!z^{2m+1}}.
\end{align}
Comparing the coefficients of $u^{2m}$ in \eqref{nc-fe-psx} and \eqref{equ:cor-nc}, we can derive \eqref{equ:nc-cases} easily.
\end{proof}

\begin{exa}  Using \emph{Mathematica}, we can find that
\begin{align*}
\nc(u)&=1+\frac1{2}u^2+\frac{1}{4!}(5-4x)u^4+\frac{1}{6!}(61-76x+16x^2)u^6
\nonumber\\&\quad+\frac1{8!}(1385-2424x+1104x^2-64x^3)u^8\nonumber\\
&\quad+\frac1{10!}(50521-113672x+79728x^2-16832x^3+256x^4)u^{10}+\cdots.
\end{align*}
Comparing the coefficients of $u^0,u^2,u^4$ and $u^6$, we deduce that
\begin{align}
&\sum_{n=0}^\infty \frac{(-1)^n}{e^{(2n+1)y}+1}=\frac{1}{4}-\frac1{4}\sqrt{1-x}z,\label{eq-nc-z-1}\\
&\sum_{n=0}^\infty \frac{(-1)^n(2n+1)^2}{e^{(2n+1)y}+1}=\frac1{4}\sqrt{1-x}z^3-\frac{1}{4},\label{eq-nc-z-2}\\
&\sum_{n=0}^\infty \frac{(-1)^n(2n+1)^4}{e^{(2n+1)y}+1}=\frac{5}{4}-\frac{1}{4}\sqrt{1-x}(5-4x)z^5,\label{eq-nc-z-3}\\
&\sum_{n=0}^\infty \frac{(-1)^n(2n+1)^6}{e^{(2n+1)y}+1}=-\frac{61}{4}+\frac{1}{4}\sqrt{1-x}(61-76x+16x^2)z^7,\label{eq-nc-z-4}.
\end{align}
\end{exa}

\begin{re}
The evaluations \eqref{eq-dc-z-1}-\eqref{eq-dc-z-4} can be found in \cite[Entry 17(vi)-(ix)]{B1991}. Those in \eqref{eq-dc-z-1}, \eqref{eq-dc-z-2} and \eqref{eq-nc-z-1} can also be found in \cite[Eq. (15)]{Campbell}, \cite[Thm. 1]{Campbell} and \cite[Eq. (35)]{Campbell}, respectively. In particular, Cooper and  Lam\cite[Thms. 0.0-3.3]{Cooper-Lam-2009} found 16 equations similar to Theorems \ref{thm-dc-cases} and \ref{thm-nc-cases}.
\end{re}

Utilizing \eqref{Bruce-diff-xyz} and the fact that ($m\ge 0$)
\begin{align*}
&\frac{d}{dx}\sum_{n=0}^\infty \frac{(-1)^n(2n+1)^{2m}}{e^{(2n+1)y}-1}=-\frac1{4}\frac{dy}{dx} \cdot  \sum_{n=0}^\infty \frac{(-1)^n(2n+1)^{2m+1}}{\sinh^2\Big(\frac{2n+1}{2}y\Big)},\\
&\frac{d}{dx}\sum_{n=0}^\infty \frac{(-1)^n(2n+1)^{2m}}{e^{(2n+1)y}+1}=-\frac1{4}\frac{dy}{dx} \cdot  \sum_{n=0}^\infty \frac{(-1)^n(2n+1)^{2m+1}}{\cosh^2\Big(\frac{2n+1}{2}y\Big)},
\end{align*}
we obtain the following corollary immediately from Theorems \ref{thm-dc-cases} and \ref{thm-nc-cases}.

\begin{cor} For any integer $m\ge 0$,
\begin{align*}
&\sum_{n=0}^\infty \frac{(-1)^n(2n+1)^{2m+1}}{\sinh^2\Big(\frac{2n+1}{2}y\Big)}
= (-1)^m x(1-x)z^2 \frac{d}{dx} \Big(z^{2m+1} p_{2m}(x)\Big)\in \mathbb{Q}[x,z,z'],\\
&\sum_{n=0}^\infty \frac{(-1)^n(2n+1)^{2m+1}}{\cosh^2\Big(\frac{2n+1}{2}y\Big)}
=- (-1)^m x(1-x)z^2 \frac{d}{dx} \Big(\sqrt{1-x} z^{2m+1}f_{2m}(x)\Big) \in \mathbb{Q}[\sqrt{1-x},x,z,z'].
\end{align*}
\end{cor}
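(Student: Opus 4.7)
The plan is to deduce the corollary directly from Theorems \ref{thm-dc-cases} and \ref{thm-nc-cases} by differentiating their closed-form evaluations with respect to $x$. The two derivative identities displayed immediately before the corollary are the exact bridge needed: they express the target hyperbolic sums as $-4/(dy/dx)$ times the $x$-derivatives of the $(e^{(2n+1)y}\mp 1)^{-1}$-series in \eqref{equ:thm-dc-cases} and \eqref{equ:nc-cases}.

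I would first record those two derivative formulas by termwise differentiation. For each summand, the chain rule gives
\[
\frac{d}{dx}\frac{1}{e^{(2n+1)y}\mp 1} = \mp\frac{(2n+1)\,e^{(2n+1)y}}{(e^{(2n+1)y}\mp 1)^{2}}\cdot\frac{dy}{dx},
\]
and the factorizations $(e^{u}-1)^{2}=4e^{u}\sinh^{2}(u/2)$ and $(e^{u}+1)^{2}=4e^{u}\cosh^{2}(u/2)$ at $u=(2n+1)y$ reduce each term to the predicted $\sinh^{-2}$ or $\cosh^{-2}$ form times $(2n+1)(dy/dx)/4$. Termwise interchange of $d/dx$ and summation is legitimate here by rapid decay of the series.

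Next I would apply $d/dx$ to the right-hand sides of Theorems \ref{thm-dc-cases} and \ref{thm-nc-cases}. Since the Euler numbers $E_{m}$ are constants, they drop out, leaving
\[
\tfrac{(-1)^{m}}{4}\frac{d}{dx}\bigl(z^{2m+1}p_{2m}(x)\bigr)\quad\text{and}\quad -\tfrac{(-1)^{m}}{4}\frac{d}{dx}\bigl(\sqrt{1-x}\,z^{2m+1}f_{2m}(x)\bigr),
\]
respectively. Using \eqref{Bruce-diff-xyz} in the reciprocal form $dy/dx=-1/(x(1-x)z^{2})$, the factor $-4/(dy/dx)=4x(1-x)z^{2}$ converts these into the two identities claimed in the corollary.

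For the ring-membership assertions, I would note that $p_{2m}(x)\in\mathbb{Q}[x]$ by Corollary \ref{cor-dc+ds}, so $\frac{d}{dx}(z^{2m+1}p_{2m}(x))=(2m+1)z^{2m}z'p_{2m}(x)+z^{2m+1}p_{2m}'(x)\in\mathbb{Q}[x,z,z']$, and the prefactor $x(1-x)z^{2}$ keeps this inside $\mathbb{Q}[x,z,z']$. For the second identity, differentiation of $\sqrt{1-x}\,z^{2m+1}f_{2m}(x)$ produces a term with $1/(2\sqrt{1-x})$, but the outer factor of $(1-x)$ from $x(1-x)z^{2}$ cancels it, leaving $\sqrt{1-x}$ in the numerator; the remaining terms already carry $\sqrt{1-x}$ (indeed $(1-x)^{3/2}$), so the full right-hand side lies in $\mathbb{Q}[\sqrt{1-x},x,z,z']$ as claimed. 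The argument is formal throughout, so no real obstacle is expected; the only point requiring a moment of care is confirming that the $\sqrt{1-x}$ denominator is cleared in the second case.
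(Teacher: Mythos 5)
Your proposal is correct and follows essentially the same route as the paper: differentiate the closed forms of Theorems \ref{thm-dc-cases} and \ref{thm-nc-cases} in $x$, use the chain-rule identities relating $\frac{d}{dx}(e^{(2n+1)y}\mp 1)^{-1}$ to the $\sinh^{-2}$ and $\cosh^{-2}$ sums, and invoke \eqref{Bruce-diff-xyz} to replace $-4/(dy/dx)$ by $4x(1-x)z^2$; your remark about the $(1-x)$ factor clearing the $1/\sqrt{1-x}$ arising from $\frac{d}{dx}\sqrt{1-x}$ is exactly the point needed for the second ring-membership claim. One small slip: your displayed chain rule carries a $\mp$ on the right-hand side, whereas $\frac{d}{du}(e^{u}\pm 1)^{-1}=-e^{u}(e^{u}\pm 1)^{-2}$ has a uniform minus sign for both choices; your later computation applies the uniform factor $-4/(dy/dx)$ to both cases, so the final identities come out correctly, but the intermediate formula should be fixed.
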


For example, by direct calculations, we have
\begin{align}
&\sum_{n=0}^\infty \frac{(-1)^n(2n+1)^{3}}{\sinh^2\Big(\frac{2n+1}{2}y\Big)}=z^4x(1-x)\big(z-3z'(1-x)\big),\label{eq-cases-sinh-1}\\
&\sum_{n=0}^\infty \frac{(-1)^n(2n+1)^{3}}{\cosh^2\Big(\frac{2n+1}{2}y\Big)}= -\frac{1}{2} \sqrt{1-x}xz^4\big(6(x-1)z'+z\big).\label{eq-cases-cosh-1}
\end{align}
The identities \eqref{eq-cases-sinh-1} and \eqref{eq-cases-cosh-1} are the same as \cite[Eqs. (3.35) and (3.45)]{XuZhao-2022}.

We now turn to the similar sums as those in Theorem \ref{thm-nc-cases} but with even power of $2n+1$
replaced by the odd ones. These sums are closely related to the Bernoulli number.

\begin{thm} \label{thm-ds-cases}
For any positive integer $m\ge 1$,
\begin{align}\label{power-11}
\sum_{n=0}^\infty \frac{(2n+1)^{2m-1}}{e^{(2n+1)y}+1}=\frac{1}{8m}
\Big( (-1)^{m}q_{2m}(x)z^{2m} + 2(2^{2m-1}-1) B_{2m}\Big)\in \mathbb{Q}[x,z],
\end{align}
where $q_n(x)$ appears in the coefficients of the Maclaurin series of $u\, \ds(u)$ in \eqref{equ:cor-ds-fe-u}.
\end{thm}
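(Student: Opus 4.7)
\medskip
\noindent\textbf{Proof proposal.} The proof will run parallel to those of Theorems \ref{thm-dc-cases} and \ref{thm-nc-cases}, the only novelty being that the leading pole of $\ds(u)$ at $u=0$ forces the appearance of Bernoulli numbers via the Laurent expansion of $\csc$. The plan is to match the Maclaurin series \eqref{equ:cor-ds-fe-u} of $u\,\ds(u)$ against the expansion coming from the Fourier series \eqref{fe-ds}.

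First, I would rewrite \eqref{fe-ds} using \eqref{rel-den}, so that $2K/\pi=z$, $q=e^{-y}$ and $q^{2n+1}/(1+q^{2n+1})=1/(e^{(2n+1)y}+1)$. Multiplying through by $u$ yields
\begin{equation*}
u\,\ds(u)=\frac{u}{z}\csc\!\Big(\frac{u}{z}\Big)-\frac{4u}{z}\sum_{n=0}^{\infty}\frac{\sin\big((2n+1)u/z\big)}{e^{(2n+1)y}+1}.
\end{equation*}
Next I would substitute the standard expansion (which one derives from the known cotangent series together with $\csc(x)=\cot(x/2)-\cot(x)$)
\begin{equation*}
x\csc(x)=1+\sum_{m=1}^{\infty}\frac{(-1)^{m-1}\,2(2^{2m-1}-1)B_{2m}}{(2m)!}\,x^{2m},
\end{equation*}
as well as the Maclaurin series of $\sin$, into the right-hand side. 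Swapping the order of summation in the double series and collecting the coefficient of $u^{2m}$ gives
\begin{equation*}
u\,\ds(u)=1+\sum_{m=1}^{\infty}\frac{(-1)^{m-1}}{(2m)!\,z^{2m}}\left[2(2^{2m-1}-1)B_{2m}-8m\sum_{n=0}^{\infty}\frac{(2n+1)^{2m-1}}{e^{(2n+1)y}+1}\right]u^{2m}.
\end{equation*}

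Finally, since $\sn$ is odd and $\dn$ is even, $\ds$ is odd and $u\,\ds(u)$ is an even function of $u$, so only even-indexed $q_j(x)$ in \eqref{equ:cor-ds-fe-u} can be nonzero. Comparing the coefficient of $u^{2m}$ with $q_{2m}(x)/(2m)!$ and solving for the hyperbolic sum gives exactly \eqref{power-11}. The fact that $q_{2m}(x)\in\Q[x]$ (from Corollary \ref{cor-dc+ds}) ensures the stated membership in $\Q[x,z]$.

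The only step that requires genuine care is the Laurent expansion of $\csc$ together with the bookkeeping of the signs and the factor $8m=4\cdot(2m)!/(2m-1)!$; once this is handled, the rest is a formal identification of coefficients, exactly as in the two preceding theorems. I do not anticipate any analytic subtlety because the Fourier series \eqref{fe-ds} converges absolutely for $u$ in a neighborhood of $0$, which legitimizes termwise differentiation/expansion.
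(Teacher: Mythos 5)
Your proposal is correct and follows essentially the same route as the paper's proof: both substitute the Ramanujan notation into the Fourier expansion \eqref{fe-ds}, expand $\csc$ and $\sin$ into their Laurent/Maclaurin series, and compare the coefficient of $u^{2m}$ with $q_{2m}(x)/(2m)!$ from \eqref{equ:cor-ds-fe-u}. Your sign and factor bookkeeping (in particular $8m/(2m)! = 4/(2m-1)!$) checks out against the paper's intermediate identity \eqref{ds-fe-u}.
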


\begin{proof}
The proof is completely similar to the proof of Theorem \ref{thm-dc-cases}. Applying \eqref{fe-ds} gives
\begin{align}\label{ds-fe-u}
u\ds(u)&=1-\sum_{m=1}^\infty \frac{(-1)^m2(2^{2m-1}-1)B_{2m}}{(2m)!z^{2m}}u^{2m}\nonumber\\
&\quad+4\sum_{m=1}^\infty \frac{(-1)^m}{(2m-1)!z^{2m}}\left\{\sum_{n=0}^\infty \frac{(2n+1)^{2m-1}}{e^{(2n+1)y}+1}\right\}u^{2m},
\end{align}
where we used the Laurent series expansions of $\csc(x)$ and power series expansions of $\sin(x)$
\begin{align*}
\csc(x)=\sum_{m=0}^\infty (-1)^{m-1}\frac{2(2^{2m-1}-1)B_{2m}}{(2m)!} x^{2m-1} \quad\text{and}\quad \sin(x)=-\sum_{m=1}^\infty \frac{(-1)^m}{(2m-1)!} x^{2m-1}.
\end{align*}
One the other hand, by Corollary \ref{cor-dc+ds} and comparing the coefficients of $u^{2m}$ of \eqref{ds-fe-u}
and \eqref{equ:cor-ds-fe-u} we arrive at the conclusion.
\end{proof}

\begin{cor}  \label{cor-coshWithq}
For any integer $m\ge 1$ we have
\begin{align*}
\sum_{n=0}^\infty \frac{(2n+1)^{2m}}{\cosh^2((2n+1)y/2)}
 &\,=\frac{(-1)^{m}}{2m} x(1-x)z^{2m+1}\Big(z q'_{2m}(x)+2mz'q_{2m}(x)\Big), \\
\sum_{n=0}^\infty \frac{(2n+1)^{2m+1}\sinh((2n+1)y/2)}{\cosh^3((2n+1)y/2))}
    &\,=\frac{(-1)^{m}}{2m}  x(1-x)z^2\frac{d}{dx}\bigg\{x(1-x)z^{2m+1}\Big(z q'_{2m}(x)+2mz'q_{2m}(x)\Big) \bigg\},
\end{align*}
where $q'_m(x):=\frac{dq_m(x)}{dx}$.
\end{cor}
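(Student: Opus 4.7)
\textbf{Proof proposal for Corollary \ref{cor-coshWithq}.} The plan is to deduce both identities from Theorem \ref{thm-ds-cases} by two successive differentiations in $y$: the first produces the $\cosh^2$ sum, and the second produces the $\cosh^3\cdot\sinh$ sum. At each step I convert $d/dy$ into $d/dx$ via the Ramanujan relation $dx/dy=-x(1-x)z^2$ from \eqref{Bruce-diff-xyz}.

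First, I would use the elementary rewriting
\begin{equation*}
\frac{1}{\cosh^2((2n+1)y/2)}=\frac{4e^{(2n+1)y}}{(e^{(2n+1)y}+1)^2}=-\frac{4}{2n+1}\,\frac{d}{dy}\,\frac{1}{e^{(2n+1)y}+1}.
\end{equation*}
Multiplying by $(2n+1)^{2m}$, summing over $n\ge 0$, and interchanging $d/dy$ with the sum (justified by uniform convergence on compact subsets of $y>0$ due to the exponential decay of the summands) gives
\begin{equation*}
\sum_{n=0}^\infty \frac{(2n+1)^{2m}}{\cosh^2((2n+1)y/2)}=-4\,\frac{d}{dy}\sum_{n=0}^\infty \frac{(2n+1)^{2m-1}}{e^{(2n+1)y}+1}.
\end{equation*}
Inserting the evaluation \eqref{power-11} (the Bernoulli constant disappears under differentiation) and replacing $d/dy$ by $-x(1-x)z^2\,d/dx$ yields
\begin{equation*}
-4\,\frac{d}{dy}\!\left[\frac{(-1)^m q_{2m}(x)z^{2m}}{8m}\right]=4x(1-x)z^2\cdot\frac{(-1)^m z^{2m-1}}{8m}\bigl(z\,q'_{2m}(x)+2mz'q_{2m}(x)\bigr),
\end{equation*}
which simplifies to the first formula of the corollary.

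For the second identity I would differentiate once more, using
\begin{equation*}
\frac{d}{dy}\,\frac{1}{\cosh^2((2n+1)y/2)}=-\frac{(2n+1)\sinh((2n+1)y/2)}{\cosh^3((2n+1)y/2)}.
\end{equation*}
Thus, after summing and interchanging again,
\begin{equation*}
\sum_{n=0}^\infty \frac{(2n+1)^{2m+1}\sinh((2n+1)y/2)}{\cosh^3((2n+1)y/2)}=-\frac{d}{dy}\sum_{n=0}^\infty \frac{(2n+1)^{2m}}{\cosh^2((2n+1)y/2)}.
\end{equation*}
Converting $-d/dy$ to $x(1-x)z^2\,d/dx$ and substituting the first identity gives precisely the stated expression.

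The only real obstacle is bookkeeping: correctly applying the product rule to $x(1-x)z^{2m+1}\bigl(zq'_{2m}(x)+2mz'q_{2m}(x)\bigr)$ and tracking the signs produced by the two applications of $dx/dy=-x(1-x)z^2$. No further analytic input is required, since the validity of swapping sum and derivative is immediate from the exponential decay of $(2n+1)^{2m-1}/(e^{(2n+1)y}+1)$ and its term-wise $y$-derivatives on compact subsets of $y>0$.
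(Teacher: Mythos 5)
Your proposal is correct and follows essentially the same route as the paper: the paper's proof consists precisely of the two differentiation identities you derive (expressing the $\cosh^{2}$ sum as $4x(1-x)z^{2}\frac{d}{dx}$ applied to the sum from Theorem \ref{thm-ds-cases}, and the $\sinh/\cosh^{3}$ sum as $x(1-x)z^{2}\frac{d}{dx}$ applied to the $\cosh^{2}$ sum), combined with \eqref{power-11} and $dx/dy=-x(1-x)z^{2}$. Your version merely spells out the elementary rewritings and the justification for interchanging sum and derivative, which the paper leaves implicit.
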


\begin{proof} This follows quickly from Theorem \ref{thm-ds-cases} and the identities
\begin{align*}
\sum_{n=0}^\infty \frac{(2n+1)^{2m}}{\cosh^2((2n+1)y/2)}
    &\,=4x(1-x)z^2\frac{d}{dx} \sum_{n=0}^\infty \frac{(2n+1)^{2m-1}}{e^{(2n+1)y}+1},\\
\sum_{n=0}^\infty \frac{(2n+1)^{2m+1}\sinh((2n+1)y/2)}{\cosh^3((2n+1)y/2))}
    &\,=x(1-x)z^2\frac{d}{dx}\sum_{n=1}^\infty \frac{(2n+1)^{2m}}{\cosh^2((2n+1)y/2)}.
\end{align*}
\end{proof}

We can now prove a more precise version of Theorem \ref{thm-main2}.
\begin{thm} \label{thm-main2Precise}
Set $\Gamma=\Gamma(1/4)$. For any integer $m\ge 0$ we have
\begin{align*}
 \sum_{n=0}^\infty \frac{(2n+1)^{4m+2}}{\cosh^2((2n+1)\pi/2)}
    &\,=-\frac{q'_{4m+2}(1/2)}{2^{4m+7}(2m+1)} \cdot \frac{\Gamma^{8m+8}}{\pi^{6m+6}},\\
 \sum_{n=0}^\infty \frac{(2n+1)^{4m}}{\cosh^2((2n+1)\pi/2)}
    &\,= \frac{q_{4m}(1/2)}{2^{4m+1}} \cdot \frac{\Gamma^{8m}}{\pi^{6m+1}}, \\
\sum_{n=0}^\infty \frac{(2n+1)^{4m+1}\sinh((2n+1)\pi/2)}{\cosh^3((2n+1)\pi/2))}
    &\,=\frac{1}{4^{2 m+5} }
    \left\{
    \begin{array}{l}
    \Gamma^{8}\big( 4 q_{4m}(1/2) +q_{4m}''(1/2)/m\big) \\
     +   2^8 (4 m+1) q_{4m}(1/2) \pi^4 \phantom{\frac11}
    \end{array}
    \right\} \frac{ \Gamma^{8m}}{\pi^{6m+6}}, \\
\sum_{n=0}^\infty \frac{(2n+1)^{4m+3}\sinh((2n+1)\pi/2)}{\cosh^3((2n+1)\pi/2))}
    &\,=\frac{(4 m+3)q_{4m+2}'(1/2)}{2^{4m+7}(2 m+1)} \cdot \frac{\Gamma^{8m+8}}{\pi^{6m+7}} .
\end{align*}
Here, if $m=0$ then the term $q_{4m}''(1/2)/m$ does not appear in the third equation.
\end{thm}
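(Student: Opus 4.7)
The plan is to apply Corollary \ref{cor-coshWithq}, which expresses each sum in terms of the polynomials $q_n(x)$ and the Ramanujan functions $z,z'$, then specialize to $x=1/2$ via \eqref{special-case-x}. The decisive ingredient is the following vanishing lemma for $q_n(x)$ at the self-complementary point.

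\textbf{Vanishing Lemma.} \emph{Let $\phi(u,x):=u\,\ds(u,\sqrt{x}) = \sum_{m\ge 0} q_m(x)\,u^m/m!$. Then $q_m^{(k)}(1/2)=0$ unless $m\equiv 2k\pmod 4$; in particular $q_{4m+2}(1/2)=q_{4m+2}''(1/2)=0$ and $q'_{4m}(1/2)=0$ for all relevant $m$.} I would prove this by invoking the imaginary-argument transformations $\s(iu,k)=i\,\s(u,k')/\cn(u,k')$ and $\dn(iu,k)=\dn(u,k')/\cn(u,k')$ (cf. \cite[p.~505]{WW1966}), which yield $\ds(iu,k)=-i\,\ds(u,k')$ and hence the functional equation $\phi(iu,x)=\phi(u,1-x)$. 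Comparing Taylor coefficients of $u^m$ gives $i^m q_m(x)=q_m(1-x)$; differentiating $k$ times in $x$ and evaluating at $x=1/2$ produces $i^m q_m^{(k)}(1/2)=(-1)^k q_m^{(k)}(1/2)$, which forces the stated vanishings.

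With the Lemma in hand, the four identities reduce to direct calculation. For the first sum, Corollary \ref{cor-coshWithq} with $2M=4m+2$ expresses it as $-\frac{1}{2(2m+1)}\,x(1-x)\,z^{4m+3}\bigl(zq'_{4m+2}(x)+(4m+2)z'q_{4m+2}(x)\bigr)$; the Lemma annihilates the $q_{4m+2}$ term at $x=1/2$, and substituting $z(1/2)=\Gamma^2/(2\pi^{3/2})$ yields the claimed expression. The second identity is analogous, using $q'_{4m}(1/2)=0$. The third and fourth follow from the differentiation relation $\sum(2n+1)^{2M+1}\sinh/\cosh^3 = x(1-x)z^2\,d/dx[\sum(2n+1)^{2M}/\cosh^2]$: after expanding the derivative at $x=1/2$, the vanishings again eliminate the undesired $q''_{4m+2}(1/2)$ or $q'_{4m}(1/2)$ contributions, and the remaining terms collapse using $z''(1/2)=z(1/2)$ and $(zz')(1/2)=2/\pi$. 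The boundary cases $m=0$ in the second and third identities (where Corollary \ref{cor-coshWithq} carries a spurious $1/(2M)$ factor) must be handled separately, via the classical evaluation $\sum_{n\ge 0}1/\cosh^2((2n+1)\pi/2)=1/(2\pi)$ together with a direct manipulation of the Fourier series \eqref{fe-ds}.

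The main obstacle is the Vanishing Lemma, which rests on recognizing that at $x=1/2$ the modulus $k=1/\sqrt{2}$ is self-complementary; once this symmetry is secured, the remaining work is essentially algebraic bookkeeping.
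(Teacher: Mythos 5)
Your proposal is correct and follows essentially the same route as the paper: the Jacobi imaginary transformation $iu\,\ds(iu,k')=u\,\ds(u,k)$ yields $i^m q_m(x)=q_m(1-x)$, whose differentiated specializations at $x=1/2$ give exactly the vanishings $q_{4m+2}(1/2)=q''_{4m+2}(1/2)=q'_{4m}(1/2)=0$ used in the paper, after which Corollary \ref{cor-coshWithq} and \eqref{special-case-x} finish the computation. Your uniform statement $q_m^{(k)}(1/2)=0$ unless $m\equiv 2k\pmod 4$, and your explicit handling of the $m=0$ boundary case (where Corollary \ref{cor-coshWithq} does not directly apply), are minor refinements of the same argument.
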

\begin{proof} We have the following classical result (see the bottom of page 47 of Hancock's book \cite{Hancock1910}):
\begin{equation*}
\s(iu,\sqrt{1-k^2})=i\frac{\s(u,k)}{\cn(u,k)},\quad   \dn(iu,\sqrt{1-k^2})=\frac{\dn(u,k)}{\cn(u,k)},
\end{equation*}
where $i=\sqrt{-1}$ as usual. Therefore
\begin{equation} \label{dnCoeff}
iu\ds(iu,\sqrt{1-k^2})-u\ds(u,k)=0.
\end{equation}
This implies that for all $m>0$
\begin{equation}\label{equ:qVanish}
q_{4m-2}(1-x)+q_{4m-2}(x)=0.
\end{equation}
Differentiating \eqref{dnCoeff} with respect to $x$ we see that for all $m\ge 0$
\begin{equation}\label{equ:q'Vanish}
q_{4m}'(1-x)+q_{4m}'(x)=0.
\end{equation}
Differentiating \eqref{equ:qVanish} twice we get for all $m>0$
\begin{equation}\label{equ:q''Vanish}
q''_{4m-2}(1-x)+q''_{4m-2}(x)=0.
\end{equation}
Taking $x=1/2$ in \eqref{equ:qVanish}--\eqref{equ:q''Vanish} we have for all $m>0$
\begin{equation*}
 q_{4m-2}(1/2)=0, \quad q'_{4m}(1/2)=0, \quad q''_{4m-2}(1/2)=0.
\end{equation*}
Hence the theorem follows from Corollary \ref{cor-coshWithq} and \eqref{special-case-x}.
\end{proof}

Note that the above theorem confirms some identities contained in \cite[Conjecture 4.1]{XuZhao-2022}.
\begin{exa} We have
\begin{align*}
&\sum_{n=1}^\infty \frac{(2n-1)^{5}\sinh((2n-1)\pi/2)}{\cosh^3((2n-1)\pi/2))}=\frac{3\Gamma^{8}(1/4)}{64\pi^{8}}+\frac{\Gamma^{16}(1/4)}{3\cdot 2^{12}\pi^{12}},\\
&\sum_{n=1}^\infty \frac{(2n-1)^{7}\sinh((2n-1)\pi/2)}{\cosh^3((2n-1)\pi/2))}=\frac{3\Gamma^{16}(1/4)}{2^{10}\pi^{13}},\\
&\sum_{n=1}^\infty \frac{(2n-1)^{9}\sinh((2n-1)\pi/2)}{\cosh^3((2n-1)\pi/2))}=\frac{189\Gamma^{16}(1/4)}{5\cdot  2^{10}\pi^{14}}+\frac{9\Gamma^{24}(1/4)}{2^{16}\pi^{18}},\\
&\sum_{n=1}^\infty \frac{(2n-1)^{11}\sinh((2n-1)\pi/2)}{\cosh^3((2n-1)\pi/2))}=\frac{153\Gamma^{24}(1/4)}{2^{14}\pi^{19}}.
\end{align*}
\end{exa}

We can now prove a more precise evaluation of the second integral in Theorem \ref{thm-main34}.

\begin{thm}  \label{thm-main4Precise}
For any positive integer $p$,
\begin{align}
&\int_0^\infty \frac{x^{4p+1}}{(\cos x+\cosh x)^2}dx  \nonumber \\
&= \frac{(-1)^p}{2^{6p+11}p} \big( 4 p \, q_{4p}(1/2) +q_{4p}''(1/2)\big)\frac{\Gamma^{8p+8}(1/4)}{\pi^{2p+4}}
- \frac{(-1)^p (4 p+1) q_{4p}(1/2)}{2^{6p+3}} \frac{\Gamma^{8p}(1/4)}{\pi^{2p}}. \label{equ:conj-for-a1}
\end{align}
And when $p=0$ we have
\begin{align*}
\int_0^\infty \frac{x}{(\cos x+\cosh x)^2}dx = -\frac{1}{2^3}+\frac{\Gamma^{8}(1/4)}{2^{9}\pi^4}.
\end{align*}
\end{thm}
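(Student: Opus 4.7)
The proof plan is to combine Corollary \ref{cor-Int-cos-h-1}, which reduces the integral to two hyperbolic series, with the closed-form evaluations of those series supplied by Theorem \ref{thm-main2Precise}.

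\medskip

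First I would treat the generic case $p \ge 1$. Rewriting Corollary \ref{cor-Int-cos-h-1} with the indexing shift $n \mapsto n+1$ (so that $2n-1$ becomes $2n+1$ and the sum starts from $n = 0$), one obtains
\begin{equation*}
\int_0^\infty \frac{x^{4p+1}\, dx}{(\cos x+\cosh x)^2}
= \frac{(-1)^p \pi^{4p+1}}{2^{2p+1}}
\Bigl\{ \pi S_1 - (4p+1)\, S_2 \Bigr\},
\end{equation*}
where $S_1 = \sum_{n=0}^\infty (2n+1)^{4p+1}\sinh((2n+1)\pi/2)/\cosh^3((2n+1)\pi/2)$ and $S_2 = \sum_{n=0}^\infty (2n+1)^{4p}/\cosh^2((2n+1)\pi/2)$. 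Now substitute the closed forms from Theorem \ref{thm-main2Precise} (with $m = p$) for $S_1$ and $S_2$. This will produce three terms: one proportional to $\Gamma^{8p+8}/\pi^{6p+6}$ (coming solely from $S_1$), and two proportional to $\Gamma^{8p}/\pi^{6p+1}$ (one from each of the two summands inside $S_1$, and one from $S_2$).

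\medskip

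The main work is then bookkeeping. The coefficient of $\Gamma^{8p+8}$ unpacks directly from the first summand of $S_1$: multiplying by $\pi \cdot (-1)^p \pi^{4p+1}/2^{2p+1}$ and using $4^{2p+5} = 2^{4p+10}$ gives
\begin{equation*}
\frac{(-1)^p}{2^{6p+11}p}\bigl( 4p\, q_{4p}(1/2) + q_{4p}''(1/2) \bigr)\frac{\Gamma^{8p+8}}{\pi^{2p+4}} .
\end{equation*}
For the coefficient of $\Gamma^{8p}$ I expect a cancellation: the second summand of $\pi S_1$ contributes $(4p+1)q_{4p}(1/2)\Gamma^{8p}/(2^{4p+2}\pi^{6p+1})$, while $-(4p+1)S_2$ contributes $-(4p+1)q_{4p}(1/2)\Gamma^{8p}/(2^{4p+1}\pi^{6p+1})$; their sum is $-(4p+1)q_{4p}(1/2)\Gamma^{8p}/(2^{4p+2}\pi^{6p+1})$. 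Multiplying by $(-1)^p \pi^{4p+1}/2^{2p+1}$ yields exactly $-(-1)^p(4p+1)q_{4p}(1/2)\Gamma^{8p}/(2^{6p+3}\pi^{2p})$, which matches \eqref{equ:conj-for-a1}.

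\medskip

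For the exceptional case $p=0$ one cannot use the formula for $S_1$ directly since it contains $q_{4m}''(1/2)/m$; but by Theorem \ref{thm-main2Precise}'s convention this term is absent for $m=0$. Using $q_0(x) \equiv 1$ (because $u\cdot \ds(u) \to 1$ as $u\to 0$), one gets $S_2 = 1/(2\pi)$ and $S_1 = \Gamma^8/(256\pi^6) + 1/(4\pi^2)$. Plugging these into the $a=1$ instance
\begin{equation*}
2\int_0^\infty \frac{x\, dx}{(\cos x+\cosh x)^2} = -\pi\, S_2 + \pi^2\, S_1
\end{equation*}
from the display after Theorem \ref{thm-Integ-series-1} produces $-1/4 + \Gamma^8/(256\pi^4)$ on the right, whence dividing by $2$ yields the claimed value. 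No real obstacle is anticipated — the entire argument is algebraic assembly of prior results, and the only subtlety is checking the combinatorial identity of the exponents of $2$ and $\pi$ and the cancellation that absorbs the contribution of $S_2$ into the $\Gamma^{8p}$ term.
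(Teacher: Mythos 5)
Your proposal is correct and follows essentially the same route as the paper: the authors likewise specialize Corollary \ref{cor-Int-cos-h-1} to $a=4p+1$, substitute the closed forms from Theorem \ref{thm-main2Precise} with $m=p$, and carry out the same cancellation that merges the $\pi^4$-part of $S_1$ with the $S_2$ contribution into the single $\Gamma^{8p}/\pi^{2p}$ term, handling $p=0$ separately via $q_0\equiv 1$ and the absence of the $q_{4p}''(1/2)/p$ term. Your exponent bookkeeping checks out against the paper's computation.
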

\begin{proof}
By Cor. \ref{cor-Int-cos-h-1} we have
\begin{align*}
&\int_0^\infty \frac{x^{4p+1}dx}{(\cos x+\cosh x)^2}\nonumber\\
&=(-1)^p \frac{\pi^{4p+2}}{2^{2p+1}}\sum_{n=0}^\infty \frac{(2n+1)^{4p+1}\sinh\left(\frac{2n+1}{2}\pi\right)}{\cosh^3\left(\frac{2n+1}{2}\pi\right)}
 -(-1)^p(4p+1) \frac{\pi^{4p+1}}{2^{2p+1}}\sum_{n=0}^\infty \frac{(2n+1)^{4p}}{\cosh^2\left(\frac{2n+1}{2}\pi\right)}\\
&=(-1)^p \frac{\pi^{4p+2}}{2^{2p+1}} \frac{1}{4^{2 p+5} p}
    \left\{
    \begin{array}{l}
    \Gamma^{8}\big( 4 p \, q_{4p}(1/2) +q_{4p}''(1/2)\big) \\
     +   2^8 p  (4 p+1) q_{4p}(1/2) \pi^4 \phantom{\frac11}
    \end{array}
    \right\} \frac{ \Gamma^{8p}}{\pi^{6p+6}}
 -(-1)^p(4p+1) \frac{\pi^{4p+1}}{2^{2p+1}} \frac{q_{4p}(1/2)}{2^{4p+1}} \cdot \frac{\Gamma^{8p}}{\pi^{6p+1}}\\
&= \frac{(-1)^p}{2^{6p+11}}
    \Big\{
    \Gamma^{8}\big( 4 \, q_{4p}(1/2) +q_{4p}''(1/2)/p\big)
    - 2^8 (4 p+1) q_{4p}(1/2) \pi^4  \Big \} \frac{ \Gamma^{8p}}{\pi^{2p+4}}
\end{align*}
by Theorem \ref{thm-main2Precise} if $p>0$. If $p=0$ then the term $q_{4p}''(1/2)/p$ does not appear. Since $q_0(x)=1$
the case $p=0$ follows immediately.
\end{proof}

\begin{exa} \label{exa-Integral+}
By Mathematica we can compute the following easily using the formulas in Theorem \ref{thm-main4Precise}:
\begin{align*}
&\int_0^\infty \frac{x^{5}}{(\cos x+\cosh x)^2}dx=\frac{3\Gamma^{8}(1/4)}{2^{9}\pi^{2}}-\frac{\Gamma^{16}(1/4)}{3\cdot 2^{15}\pi^{6}},\\
&\int_0^\infty \frac{x^{9}}{(\cos x+\cosh x)^2}dx=-\frac{189\Gamma^{16}(1/4)}{5\cdot 2^{15}\pi^{4}}+\frac{9\Gamma^{24}(1/4)}{2^{21}\pi^{8}},\\
&\int_0^\infty \frac{x^{13}}{(\cos x+\cosh x)^2}dx=\frac{18711\Gamma^{24}(1/4)}{5\cdot 2^{21}\pi^{6}}-\frac{5301\Gamma^{32}(1/4)}{7\cdot 2^{27}\pi^{10}},\\
&\int_0^\infty \frac{x^{17}}{(\cos x+\cosh x)^2}dx=-\frac{5544693 \Gamma^{32}(1/4)}{5\cdot 2^{27}\pi^{8}}+\frac{233361\Gamma^{40}(1/4)}{2^{33}\pi^{12}},\\
&\int_0^\infty \frac{x^{21}}{(\cos x+\cosh x)^2}dx=\frac{4233045663 \Gamma^{40}(1/4)}{5\cdot  2^{33} \pi ^{10}}-\frac{1940699817 \Gamma^{48}(1/4)}{11\cdot  2^{39} \pi ^{14}},\\
&\int_0^\infty \frac{x^{25}}{(\cos x+\cosh x)^2}dx=-\frac{17651016517365 \Gamma^{48}(1/4)}{13\cdot  2^{39}\pi ^{12}}+\frac{283641186969 \Gamma^{56}(1/4)}{2^{45} \pi ^{16}},\\
&\int_0^\infty \frac{x^{29}}{(\cos x+\cosh x)^2}dx=\frac{20473552886711463 \Gamma^{56}(1/4)}{5\cdot 2^{45}\pi ^{14}}-\frac{854871935822163 \Gamma^{64}(1/4)}{2^{51}\pi^{18}}.
\end{align*}
\end{exa}

We conclude the paper by the following conjecture which is supported by comparing the coefficients in Examples \ref{exa-Integral-} and \ref{exa-Integral+}.
\begin{con}
For all positive integer $p$ let $i_p,j_p,g_p$ and $h_p$ be the numbers appearing in Theorems \ref{thm-main3Precise} and \ref{thm-main4Precise}. Then the following relations hold:
\begin{align*}
 g_p=-\frac{2^{2p-1}-(-1)^p}{2^{2p-1}}i_p\quad \text{and}\quad h_p=-\frac{2^{2p-1}+(-1)^p}{2^{2p-1}}j_p.
\end{align*}
\end{con}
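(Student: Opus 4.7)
The plan is to reduce the two conjectured identities to algebraic relations between two different series parametrizations of the lemniscatic modular data at $x=1/2$, and then to establish those relations via a modular transformation. By Corollary~\ref{cor-Int-cos-h-2} combined with Theorem~\ref{thm-main1Precise}, the coefficients $(i_p,j_p)$ are explicit polynomial expressions in Ramanujan's $\Phi_{1,4p}$ and $\Phi'_{1,4p}$ evaluated at $x=1/2$; by Corollary~\ref{cor-Int-cos-h-1} together with Theorem~\ref{thm-main2Precise}, the coefficients $(g_p,h_p)$ are explicit polynomial expressions in $q_{4p}(1/2)$ and $q''_{4p}(1/2)$. Thus the conjecture reduces to two families of polynomial identities between these two sets of numbers, indexed by $p\ge 1$.

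The first major step is to translate the cosh-type series underlying $(g_p,h_p)$ back into the language of $\Phi_{1,m}$. Using the algebraic identity $\frac{x}{(1+x)^2}=\frac{x}{(1-x)^2}-\frac{4x^2}{(1-x^2)^2}$ together with the even/odd index splitting, one checks that, with $Q=e^{-\pi}$,
\[
\sum_{n\ge 1}\frac{(2n-1)^m}{\cosh^2((2n-1)\pi/2)}=4\widetilde{\Phi}_{1,m}(Q)-4(4+2^m)\widetilde{\Phi}_{1,m}(Q^2)+2^{m+4}\widetilde{\Phi}_{1,m}(Q^4),
\]
where $\widetilde{\Phi}_{1,m}(Q):=\sum_{n\ge 1}n^m Q^n/(1-Q^n)^2$. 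The middle term is the lemniscatic value $\frac14\sum n^m/\sinh^2(n\pi)$ already encoded by $(i_p,j_p)$, while $\widetilde{\Phi}_{1,m}(Q)$ and $\widetilde{\Phi}_{1,m}(Q^4)$ correspond to the CM moduli $\tau=i/2$ and $\tau=2i$, interchanged by the modular involution $\tau\mapsto -1/\tau$ (equivalently, $y\mapsto \pi^2/y$, $x\mapsto 1-x$). The Chowla--Selberg formula expresses the associated values of $K$, $K'$, and their higher derivatives in terms of $\Gamma(1/4)$; substituting these into the display and matching coefficients of $\Gamma^{8p}/\pi^{2p}$ and $\Gamma^{8p+8}/\pi^{2p+4}$ would yield the two conjectured linear relations.

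The principal obstacle is the modular bookkeeping in this last step: although $K, K'$ at $\tau=i/2, 2i$ are classical, the higher derivatives needed for $\Phi'_{1,4p}$ involve intricate combinations of singular moduli, and collapsing them to the clean power-of-two form $-(2^{2p-1}\mp(-1)^p)/2^{2p-1}$ in the conjecture requires delicate cancellations that presumably reflect an Atkin--Lehner-type involution relating the two CM points. A more direct alternative route is to analyze the contour integral of $z^{4p+1}/(\sin^2 z+\sinh^2 z)^2$ along the first-quadrant quarter-circle contour of Theorems~\ref{thm-Integ-series-1} and~\ref{thm-Integ-series-2}: this integrand has double poles at \emph{both} families $y_n=n\pi(1+i)$ and $z_n=\tilde{n}(1+i)\pi$, so the residue theorem produces a single identity that simultaneously contains the sinh- and cosh-type hyperbolic sums. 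Combining the resulting identity with the decomposition $\frac{1}{U^2V^2}=\frac{1}{U^2+V^2}\bigl(\frac{1}{U^2}+\frac{1}{V^2}\bigr)$ with $U=\cos z+\cosh z$, $V=\cos z-\cosh z$ (noting $U^2+V^2=2(\cos^2 z+\cosh^2 z)$), and with Corollaries~\ref{cor-Int-cos-h-1} and~\ref{cor-Int-cos-h-2}, should produce the two desired linear relations directly, without invoking the Chowla--Selberg machinery.
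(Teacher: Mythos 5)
First, a point of orientation: the statement you set out to prove is left as a \emph{conjecture} in the paper. The authors supply no proof, only the numerical evidence obtained by matching coefficients in Examples \ref{exa-Integral-} and \ref{exa-Integral+}, and they explicitly remark that their two derivations are so different that the underlying relation ``has not been discovered yet.'' Your proposal must therefore stand entirely on its own, and as written it is a research plan rather than a proof. Your first route starts with a correct and genuinely useful reduction: the identity $\frac{x}{(1+x)^2}=\frac{x}{(1-x)^2}-\frac{4x^2}{(1-x^2)^2}$ and the even/odd splitting do give $\sum_{n\ge1}(2n-1)^m\cosh^{-2}((2n-1)\pi/2)=4\widetilde{\Phi}_{1,m}(Q)-4(4+2^m)\widetilde{\Phi}_{1,m}(Q^2)+2^{m+4}\widetilde{\Phi}_{1,m}(Q^4)$ with $Q=e^{-\pi}$, and the middle term is precisely the $\sinh^{-2}(n\pi)$ data encoded in $(i_p,j_p)$. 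But the entire content of the conjecture is then concentrated in evaluating $\widetilde{\Phi}_{1,m}$ (and, for the $j_p$/$h_p$ relation, its derivative, which drags in $z'$ and $z''$) at the nomes $Q$ and $Q^4$, i.e.\ at the CM points $\tau=i/2$ and $\tau=2i$, and in showing that the combination collapses to the exact rational factors $-(2^{2p-1}\mp(-1)^p)/2^{2p-1}$. You yourself label this ``the principal obstacle'' and say the cancellation ``presumably'' reflects an Atkin--Lehner-type involution; that is an accurate description of the open problem, not a resolution of it.

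The fallback route fails for a concrete reason. The identity $\frac{1}{U^2V^2}=\frac{1}{U^2+V^2}\bigl(\frac{1}{U^2}+\frac{1}{V^2}\bigr)$ is algebraically true, but the factor $\frac{1}{U^2+V^2}=\frac{1}{2(\cos^2z+\cosh^2z)}$ is a nonconstant meromorphic function with its own singularities, so this decomposition does not express $\int_0^\infty x^{4p+1}(UV)^{-2}\,dx$ as a $\Q$-linear combination of the two integrals governed by Corollaries \ref{cor-Int-cos-h-1} and \ref{cor-Int-cos-h-2}; there is nothing to ``combine'' with those corollaries. Worse, computing the residues of $z^{4p+1}(UV)^{-2}$ directly at the double poles shows the route leads elsewhere: at $z_n=\tn(1+i)\pi$ one has $V(z_n)^2=-4\sinh^2(\tn\pi)$ and $U'(z_n)^2=-2i\cosh^2(\tn\pi)$ (and symmetrically at $y_n=n(1+i)\pi$), so the resulting hyperbolic sums carry denominators $\sinh^2(\tn\pi)\cosh^2(\tn\pi)=\tfrac14\sinh^2((2n-1)\pi)$ and $\sinh^2(n\pi)\cosh^2(n\pi)=\tfrac14\sinh^2(2n\pi)$. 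These are sums at $y=2\pi$, not the sums at $y=\pi$ that define $i_p,j_p,g_p,h_p$, and evaluating them returns you to exactly the CM-point computation at $\tau=2i$ that was the unresolved obstacle of the first route. The conjecture remains open after your proposal; what you have contributed is a correct reformulation (the three-nome identity) that localizes the difficulty, not a proof.
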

Note that our approach to the two theorems are very different so that it is highly likely that there is an intimate relation between the two we have not discovered yet.

\medskip
{\bf Acknowledgments.} Ce Xu is supported by the National Natural Science Foundation of China (Grant No. 12101008), the Natural Science Foundation of Anhui Province (Grant No. 2108085QA01) and the University Natural Science Research Project of Anhui Province (Grant No. KJ2020A0057). Jianqiang Zhao is supported by the Jacobs Prize from The Bishop's School.


\begin{thebibliography}{99}

\bibitem{A2000}
G.E. Andrews, R. Askey and R. Roy, \emph{Special Functions}, Cambridge University Press, 2000.

\bibitem{B1991} B.C. Berndt, \emph{Ramanujan's Notebooks, Part III}, Springer-Verlag, New York, 1990,\ pp. 87--142.

\bibitem{Berndt2016}
B.C. Berndt, Integrals associated with Ramanujan and elliptic functions, \emph{Ramanujan J.}, {\bf 41}(2016),\ pp. 369--389.

\bibitem{BB2002}
B.C.\ Berndt, P.R.\ Bialek and A.J.\ Yee, Formulas of Ramanujan for the power series coefficients of certain quotients of Eisenstein series, \emph{Int. Math. Res. Notices}
\textbf{21}(2002), pp.\ 1077-1109.

\bibitem{Campbell}
J.M. Campbell, Hyperbolic summations derived using the Jacobi functions ${\rm dc}$ and ${\rm nc}$, arxiv: 2301.03738.

\bibitem{Cooper-Lam-2009}
S. Cooper and H. Lam, Sixteen Eisenstein series, \emph{Ramanujan J.}, {\bf 18}(2009),\ pp. 33--59.


\bibitem{DCLMRT1992}
G. Dattoli, C. Chiccoli, S. Lorenzutta, G. Maino, M. Richetta and A. Torre, Generating
functions of multivariable generalized Bessel functions and Jacobi-elliptic functions, \emph{J.
Math. Phys.}, {\bf 33}(1992),\ pp.\ 25--36.

\bibitem{Hancock1910}
H. Hancock, \emph{Lectures on the Theory of Elliptic Functions}, 1st ed.,
John Wiley and Sons, New York, 1910.

\bibitem{T2015}
Y.\ Komori, K.\ Matsumoto and H.\ Tsumura, Infinite series involving hyperbolic functions, \emph{Lith. Math. J.}, \textbf{55}(2015), pp.\ 102--118.

\bibitem{MMYZ2021}
S.\ Ma, J.\ Ma, Y. Yeh and R.R. Zhou, Jacobian elliptic functions and a family of bivariate peak polynomials, \emph{Eur.\ J.\ Combin.}, \textbf{97}(2021), 103371.

\bibitem{Rama1916}
S.\ Ramanujan, On certain arithmetic functions, \emph{Trans. Cambridge Phil.\ Soc.}, \textbf{22} (9) (1916), pp.\ 159-184. Also available in \emph{Collected Papers of Srinivasa Ramanujan}, ed.
G.H.\ Hardy, P.V.\ Seshu Aiyar and B.M.\ Wilson, Cambridge University Press, 1927, 134-147.

\bibitem{R2012}
S.\ Ramanujan, \emph{Notebooks (2 volumes)}. Tata institute of fundamental research, Bombay, 1957; seconded, 2012.

\bibitem{T2008}
H.\ Tsumura, On certain analogues of Eisenstein series and their evaluation formulas of Hurwitz type, \emph{Bull.\ London Math.\ Soc.}, \textbf{40}(2008), pp.\ 289--297.

\bibitem{T2010}
H.\ Tsumura, Analogues of the Hurwitz formulas for level 2 Eisenstein series, \emph{Results in Math.}, \textbf{58}(2010), pp.\ 365--378.

\bibitem{T2012}
H.\ Tsumura, Analogues of level-$N$ Eisenstein series, \emph{Pacific J.\ Math.}, \textbf{265}(2012), pp.\ 489--510.

\bibitem{WW1966}
E.T. Whittaker and G.N. Watson, \emph{A Course of Modern Analysis}, 4th ed., Cambridge University Press, Cambridge, 1966.

\bibitem{X2018}
C. Xu, Some evaluation of infinite series involving trigonometric and hyperbolic functions, \emph{Results. Math.}, \textbf{73}(2018), article number: 128.

\bibitem{XuZhao-2022}
C. Xu and J. Zhao, Functional relations for Ramanujan type sums of reciprocal hyperbolic functions. arXiv:1801.07565v4.

\bibitem{Ya-2018}
S. Yakubovich, On the curious series related to the elliptic integrals, \emph{Ramanujan J.}, {\bf 45}(2018),\ pp. 797--815.

\bibitem{Z1979}
I.J.\ Zucker, The summation of series of hyperbolic functions, \emph{Siam J.\ Math.\ Anal.}, \textbf{10}(1979), pp.\ 192-206.

\end{thebibliography}
\end{document}